\newtheorem{Theo}{Theorem}[section]
\newtheorem{Prop}[Theo]{Proposition}
\newtheorem{Coro}[Theo]{Corollary}
\newtheorem{Lemm}[Theo]{Lemma}
\newcommand{\T}{\mathbb{T}}
\newcommand{\Bcal}{\mathcal{B}}
\newcommand{\C}{\mathbb{C}}
\def\N{\mathbb{ N}}
\def\R{\mathbb{ R}}
\begin{document}

\title{Henry Helson meets other big shots -- a brief survey}


\author[Defant]{Andreas Defant}
\address[]{Andreas Defant\newline  Institut f\"{u}r Mathematik,\newline Carl von Ossietzky Universit\"at,\newline
26111 Oldenburg, Germany.
}
\email{defant@mathematik.uni-oldenburg.de}

\author[Schoolmann]{Ingo Schoolmann}
\address[]{Ingo Schoolmann\newline  Institut f\"{u}r Mathematik,\newline Carl von Ossietzky Universit\"at,\newline
26111 Oldenburg, Germany.
}
\email{ingo.schoolmann@uni-oldenburg.de}

\maketitle

\begin{abstract}
\noindent
A theorem of Henry Helson shows that for every ordinary Dirichlet series $\sum a_n n^{-s}$ with a square summable sequence $(a_n)$ of coefficients, almost all vertical limits
$\sum a_n \chi(n) n^{-s}$, where $\chi: \mathbb{N} \to \mathbb{T}$ is a completely multiplicative
 arithmetic function, converge on the right half-plane. We survey on recent improvements and extensions of this result within
 Hardy spaces of Dirichlet series -- relating it with some classical work of Bohr, Banach, Carleson-Hunt, Ces\`{a}ro,
  Hardy-Littlewood, Hardy-Riesz,  Menchoff-Rademacher, and Riemann.
\end{abstract}


\noindent
\renewcommand{\thefootnote}{\fnsymbol{footnote}}
\footnotetext{2010 \emph{Mathematics Subject Classification}:
Primary 43A17, Secondary 30H10,30550} \footnotetext{\emph{Key words and phrases}:
Ordinary Dirichlet series, Hardy spaces, maximal inequalities, almost everywhere convergence
} \footnotetext{}

\section{Introduction}
In his article  \cite{Helson1} from 1967 Henry Helson suggested that the classical theory of ordinary Dirichlet series
should be combined with modern harmonic analysis and functional analysis, and since then this theory saw a remarkable comeback -- in particular after the publication of the seminal articel \cite{HLS}.

The aim of this survey is to discuss various recent variants of a somewhat curious theorem of Helson dealing with  Dirichlet series $D=\sum a_n n^{-s}$ which have  $2$-summable coefficients. These Dirichlet series form the natural
 Hilbert space $\mathcal{H}_2$, and to see a very first example
look for $\varepsilon>0$ at the following translated zeta series
\begin{equation} \label{zeta}
D = \sum \frac{1}{n^{\frac{1}{2}+\varepsilon}} n^{-s}\,.
\end{equation}
In general, each  $D\in \mathcal{H}_2$  converges for all $s \in \mathbb{C}$ in the half-plane $[Re > 1/2]$. But if we multiply
the $a_n$'s with some character $\chi$, i.e.  a
completely multiplicative arithmetic functions $\chi: \mathbb{N} \to \mathbb{T}$, and  consider the new Dirichlet series
     $D^\chi =\sum a_{n} \chi(n) n^{-s}$, then the convergence in general improves considerably.
     We refer to the following remarkable theorem from \cite[Theorem, p. 140]{Helson3} as 'Helson's theorem'.

    \begin{Theo} \label{Helson}
   Let $D=\sum a_{n}n^{-s}$ be a Dirichlet series with coefficients $(a_n) \in \ell_2$.
   Then for almost all characters  $\chi: \mathbb{N} \to \mathbb{T}$
   we have that
     $D^\chi=\sum a_{n} \chi(n) n^{-s}$  converges on
     all of $[\text{Re} >0]$.
   \end{Theo}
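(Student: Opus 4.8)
The key is to recognize that, via the fundamental identification of characters $\chi:\mathbb N\to\mathbb T$ with points $\omega$ of the infinite-dimensional torus $\mathbb T^\infty$ (equipped with its Haar measure), the family of vertical limit functions $D^\chi$ can be studied probabilistically. Writing $n = p_1^{\alpha_1}\cdots p_k^{\alpha_k}$ in terms of primes, a character is determined by its values $z_j = \chi(p_j) \in \mathbb T$ on the primes, and $\chi(n) = z^{\alpha} := \prod_j z_j^{\alpha_j}$; under Haar measure on $\mathbb T^\infty$ the coordinate maps $\omega\mapsto z_j$ become independent random variables uniformly distributed on $\mathbb T$. The plan is therefore to fix $s$ in the right half-plane, view $\sum_n a_n \chi(n) n^{-s}$ as a random series in $\chi$, establish almost sure convergence for each fixed $s$, and then upgrade to simultaneous convergence on all of $[\operatorname{Re}>0]$.

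First I would treat a single point $s$ with $\operatorname{Re} s = \sigma > 0$. The terms $X_n := a_n\chi(n) n^{-s}$ are centered (each has mean zero over $\mathbb T^\infty$ as soon as $n>1$, since some $z_j$ appears to a nonzero power) and orthogonal in $L_2(\mathbb T^\infty)$, with $\sum_n \mathbb E|X_n|^2 = \sum_n |a_n|^2 n^{-2\sigma} \le \|(a_n)\|_{\ell_2}^2 < \infty$. Orthogonality plus square-summability of the variances is exactly the hypothesis of the Menchoff–Rademacher theorem, which yields almost sure convergence of $\sum_n X_n$ provided we can insert the logarithmic weights — i.e. provided $\sum_n (\log n)^2 |a_n|^2 n^{-2\sigma} < \infty$. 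For $\sigma > 0$ fixed this is immediate since $(\log n)^2 n^{-2\sigma}$ is bounded, so $\sum_n |a_n n^{-s}\chi(n)|$ already converges absolutely? No — one must be careful: absolute convergence fails in general, so one genuinely needs the orthogonal-series machinery (Menchoff–Rademacher, or a Carleson–Hunt type maximal inequality) rather than a trivial estimate. This gives: for each fixed $s$ with $\operatorname{Re} s>0$, the series $D^\chi(s)$ converges for a.e.\ $\chi$.

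Next comes the passage from "a.e.\ for each $s$" to "a.e., the series converges for all $s$", which is the main obstacle. A naive Fubini argument only gives convergence on a countable dense set of $s$, and convergence of a Dirichlet series is not closed under pointwise limits. The standard remedy is to prove a maximal inequality: show that the partial sums $S_N^\chi(s) = \sum_{n\le N} a_n\chi(n)n^{-s}$ satisfy, for each abscissa $\sigma_0>0$, a bound of the form $\big\|\sup_N \sup_{\operatorname{Re} s\ge\sigma_0}|S_N^\chi(s) - D^\chi(s)|\big\|_{L_2(d\chi)} \to 0$, or more precisely that the relevant maximal function is finite a.e. Once the supremum over $N$ is controlled on the half-plane $[\operatorname{Re}\ge\sigma_0]$, uniform convergence on that half-plane holds for a.e.\ $\chi$; intersecting over $\sigma_0 = 1/m$, $m\in\mathbb N$, gives convergence (indeed locally uniform convergence) on all of $[\operatorname{Re}>0]$ for a.e.\ $\chi$.

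**Where the difficulty lies.** The heart of the matter is the maximal inequality in the previous step — controlling $\sup_N |S_N^\chi|$ rather than just the individual $S_N^\chi$. For a single variable this is the content of the Carleson–Hunt theorem (or, with the extra logarithmic loss, the Menchoff–Rademacher maximal inequality); here one must run such an argument uniformly across the half-plane and across the infinitely many coordinates of $\chi\in\mathbb T^\infty$. A clean route is to fix $\sigma_0$, note that on $[\operatorname{Re}\ge\sigma_0]$ one can factor $n^{-s} = n^{-\sigma_0}\cdot n^{-(s-\sigma_0)}$ and absorb the analytic part into a uniformly bounded family, reducing to an estimate for $\sum_n b_n\chi(n)$ with $(b_n) = (a_n n^{-\sigma_0})\in\ell_2$ that is uniform over the auxiliary parameter; then invoke the orthogonality/maximal machinery once. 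I expect the technical friction to be entirely in making this uniformity precise — the probabilistic independence of the $\chi(p_j)$ and the summability $\sum |a_n|^2 n^{-2\sigma_0} < \infty$ are the only inputs, and everything reduces to a careful application of a square-function or maximal estimate for orthogonal series.
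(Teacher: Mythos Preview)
Your core step---applying Menchoff--Rademacher to the orthonormal system $(\chi(n))_n$ in $L_2(\Xi)$, using that $\sum_n |a_n|^2 n^{-2u}(\log n)^2 \ll \|(a_n)\|_{\ell_2}^2$ for each fixed $u>0$---is exactly the paper's route (Theorem~\ref{HelsonII}), and it is correct.

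However, you manufacture a difficulty that is not there. Your remark that ``convergence of a Dirichlet series is not closed under pointwise limits'' overlooks the decisive structural fact: every Dirichlet series has an abscissa of convergence, so if $\sum_n a_n \chi(n) n^{-u}$ converges at a single real point $u>0$, then by Abel summation it automatically converges throughout $[\operatorname{Re}>u]$. Hence the ``naive Fubini argument'' you dismiss already finishes the job: run Menchoff--Rademacher at $s=1/m$ for each $m\in\mathbb N$, intersect the countably many full-measure sets of characters, and conclude that for almost every $\chi$ the series $D^\chi$ converges on $\bigcup_m[\operatorname{Re}>1/m]=[\operatorname{Re}>0]$. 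No maximal inequality uniform in $s$ across a half-plane is needed; the paper's maximal inequality (Theorem~\ref{HelsonII}) is likewise stated at a single $u$, not uniformly in $s$. Your proposed harder route---absorbing $n^{-(s-\sigma_0)}$ via Abel summation and proving a uniform maximal bound---could be made to work, but it reproves the abscissa-of-convergence property inside the argument rather than invoking it once.
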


What is here meant by 'almost all characters $\chi$'? A simple way to understand this is to identify
the set $\Xi$ of all  characters $\chi\colon \N \to \T$
with the  infinitely dimensional torus $\T^{\infty}$, i.e. the  countable product of $\mathbb{T} = \{w \in \mathbb{C}\colon |w|=1\}$ which forms a natural compact abelian group, where
  the Haar measure  is given by the  normalized Lebesgue measure $dz$.  We write $\mathfrak{p} = 2,3,5, \ldots$ for the  sequence of prime numbers. If we consider
   pointwise multiplication on  $\Xi$, then
\begin{align} \label{idy}
\iota\colon \Xi \to \T^{\infty}, ~~ \chi \mapsto \chi(\mathfrak{p}) = (\chi(p_{n}))_{n},
\end{align}
 is a  group isomorphism which turns  $\Xi$ into a compact abelian group. The Haar measure $d \chi$ is the push forward measure of $dz$  through $\iota^{-1}$.

  Applying his theorem to the Dirichlet series from \eqref{zeta}, Helson detects as a somewhat curious  application   that   'Riemann's conjecture holds true almost everywhere' in the following sense.

  \begin{Theo} \label{riemann}
  For almost all $\chi \in \Xi$ the Dirichlet series  $\zeta^\chi = \sum \chi(n) n^{-s}$
  of the Riemann zeta series $\zeta =\sum n^{-s}$ have no zeros in the critical half-plane  $[\text{Re}>1/2]$.
  \end{Theo}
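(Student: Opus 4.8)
The plan is to deduce this from Helson's theorem (Theorem~\ref{Helson}) applied not to $\zeta$ itself — whose coefficient sequence $(1)_n$ is not square summable — but to two ``regularized'' companions of it. Fix $\varepsilon>0$ and let $\mu$ denote the M\"obius function; I would consider
\[
D_\varepsilon = \sum \frac{1}{n^{\frac12+\varepsilon}}\, n^{-s} = \zeta\big(s+\tfrac12+\varepsilon\big), \qquad
E_\varepsilon = \sum \frac{\mu(n)}{n^{\frac12+\varepsilon}}\, n^{-s} = \frac{1}{\zeta\big(s+\tfrac12+\varepsilon\big)}.
\]
Both coefficient sequences lie in $\ell_2$, since $\sum_n n^{-1-2\varepsilon}<\infty$, so Theorem~\ref{Helson} applies to each of them and yields a set $\Omega_\varepsilon\subseteq\Xi$ of full Haar measure such that, for every $\chi\in\Omega_\varepsilon$, the series $D_\varepsilon^\chi$ and $E_\varepsilon^\chi$ both converge on all of $[\text{Re}>0]$. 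Writing $w=s+\tfrac12+\varepsilon$, this says precisely that for $\chi\in\Omega_\varepsilon$ the Dirichlet series $\zeta^\chi(w)=\sum\chi(n)n^{-w}$ and $g_\chi(w):=\sum\mu(n)\chi(n)n^{-w}$ converge, and hence define holomorphic functions, on the half-plane $[\text{Re}>\tfrac12+\varepsilon]$.

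Next I would invoke unique factorization. Since $\chi$ is completely multiplicative, $n\mapsto\mu(n)\chi(n)$ is the Dirichlet-convolution inverse of $n\mapsto\chi(n)$, so on the half-plane $[\text{Re}>1]$ of absolute convergence one has
\[
\zeta^\chi(w)\, g_\chi(w) \;=\; \sum_{n}\Big(\sum_{d\mid n}\mu(d)\Big)\chi(n)\, n^{-w} \;=\; 1 .
\]
Now $\zeta^\chi$ and $g_\chi$ are both holomorphic on the connected open set $[\text{Re}>\tfrac12+\varepsilon]$, and their product equals the constant $1$ on the nonempty subdomain $[\text{Re}>1]$; by the identity theorem the product is identically $1$ on all of $[\text{Re}>\tfrac12+\varepsilon]$. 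In particular $\zeta^\chi$ has no zeros there.

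Finally, to replace $\tfrac12+\varepsilon$ by $\tfrac12$, I would intersect over a sequence $\varepsilon_k\downarrow 0$: the set $\Omega:=\bigcap_{k}\Omega_{\varepsilon_k}$ still has full Haar measure, and for $\chi\in\Omega$ the function $\zeta^\chi$ is zero-free on $\bigcup_k[\text{Re}>\tfrac12+\varepsilon_k]=[\text{Re}>\tfrac12]$, which is the assertion. I do not expect a deep obstacle; the only subtle point is that mere convergence of a Dirichlet series in a half-plane does not by itself propagate the Euler product beyond $[\text{Re}>1]$, which is exactly why one must also feed the M\"obius series $E_\varepsilon$ into Helson's theorem and argue by analytic continuation, rather than trying to control the product $\prod_p(1-\chi(p)p^{-w})^{-1}$ directly on $[\text{Re}>\tfrac12]$.
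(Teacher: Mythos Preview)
Your proposal is correct and follows essentially the same approach as the paper. The paper packages the argument as a more general statement (Theorem~\ref{RiemannH1}: any $D\in\mathcal{H}_1$ with completely multiplicative coefficients has almost all vertical limits zero-free on $[\mathrm{Re}>0]$), proved via exactly your idea of pairing $D$ with its M\"obius twist and applying Helson's theorem to both; it then specializes to $D_\varepsilon=\sum n^{-(1/2+\varepsilon)}n^{-s}$ and exhausts over $\varepsilon\to 0$, just as you do. Your write-up is in fact more explicit than the paper's about the analytic-continuation step that upgrades the convolution identity from $[\mathrm{Re}>1]$ to the larger half-plane.
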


  Let us come back to the Hilbert space $\mathcal{H}_2$, and look at it from a view point originally invented by H. Bohr in \cite{BohrStrip}. To understand this recall first that the set of all characters on $\mathbb{T}^\infty$, so the dual group of $\mathbb{T}^\infty$, consists  of all  monomials $z \mapsto z^\alpha$, where
 $\alpha = (\alpha_k)\in \mathbb{Z}^{(\mathbb{N})}$ (all  finite sequences
 of integers).
    Obviously $D = \sum a_n n^{-s} \in \mathcal{H}_2 $ if and only if there  is  a (then unique) function
  $f \in H_2(\mathbb{T}^\infty)$ such that $a_n = \hat{f}(\alpha)$ for all $\alpha \in \mathbb{N}_0^{(\mathbb{N})}$ with  $\mathfrak{p}^\alpha = n$. In other terms, the mapping
  \[
  H_2(\mathbb{T}^\infty) \to \mathcal{H}_2, \,\,\,f \mapsto D\,,
 \]
identifies two Hilbert spaces.

  Now the following two questions appear naturally. If  $D\in \mathcal{H}_2$ satisfies the assertion from  Helson's theorem,
 what does this  mean for the associated function  $f \in H_2(\mathbb{T}^\infty)$? And vice versa, if we have an
 appropriate  theorem on pointwise  convergence for functions in $H_2(\mathbb{T}^\infty)$, when does it transfer to a Helson-like theorem for ordinary Dirichlet?

 Let us give four examples which indicate that it is worth to look at such an interplay more carefully.
 All four examples come along with some more precise questions.\\

 \noindent
{\bf Ex 1:}
\label{2}
Consider for $f \in H_2(\mathbb{T}^\infty)$ and $u>0$    the 'translated' orthonormal series
$$\sum \hat{f}(\alpha)\mathfrak{p}^{-u\alpha} z^\alpha\,,$$
 which clearly   defines a function $f_u \in H_2(\mathbb{T}^\infty)$. Then an  immediate translation of   Helson's theorem through \eqref{idy}
  shows that
  \begin{equation} \label{HelsonA}
  f_u(z)=\lim_{x \to \infty }\sum_{\mathfrak{p}^\alpha < x} \hat{f}(\alpha)\mathfrak{p}^{-u\alpha} z^\alpha
 \,\,\,\, \text{almost everywhere on $\mathbb{T}^\infty$}\,.
 \end{equation}
 Does this result even hold for $u =0$, and if yes, what does this in turn then mean for Helson's theorem?
 Note that  in contrast to \eqref{HelsonA}, only if  $u > 1/2$, for all  $f \in H_2(\mathbb{T}^\infty)$
 \[
  f_u(z)=\sum_{\alpha} \hat{f}(\alpha)\mathfrak{p}^{-u\alpha} z^\alpha
 \,\,\,\, \text{almost everywhere on $\mathbb{T}^\infty$}
 \]
 (here the series only makes sense if we consider absolute convergence); see e.g. \cite[Remark 11.3]{Defant}.\\

  \noindent
{\bf Ex 2:} \label{2}
For each $s \in \mathbb{C}$ we may interpret  $D^\chi=\sum (a_{n}n^{-s}) \chi(n)$ from Helson's theorem
as an orthonormal series in $L_2(\Xi)$.
This brings us to recall the Menchoff-Rademacher theorem, which is  the fundamental theorem on  almost everywhere convergence of general orthonormal series, and as most convergence theorems it is accompanied by an maximal inequality which was isolated by Kantorovitch;  see e.g. the standard reference \cite{Alexits}.

  \begin{Theo}\label{MenchoffRademacher}
   Let $\sum c_n x_n$ be an orthonormal sequence in some $L_2(\mu)$, i.e. $(c_n) \in \ell_2$
   and $(x_n)$ an orthonormal sequence. Then $\sum c_n x_n$ converges $\mu$-almost everywhere
   whenever $(c_n \log n) \in \ell_2$. Moreover,
   there is a constant $C >0$ such that for all $c_1, \ldots, c_x \in \mathbb{C}$
   \begin{equation} \label{Kantorovitch}
            \Big( \int \sup_{x} \Big| \sum_{n=1}^x c_n x_n \Big|^{2} d\mu \Big)^{\frac{1}{2}}
           \le C \big\|(c_n \log n)\big\|_2\,.
             \end{equation}
        \end{Theo}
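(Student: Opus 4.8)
The plan is to prove the maximal inequality \eqref{Kantorovitch} first, and then deduce the almost everywhere convergence statement from it by a routine approximation argument. For the maximal inequality, the standard route is the dyadic decomposition of the partial sums. Fix $c_1,\dots,c_x$ and write $S_x = \sum_{n=1}^x c_n x_n$. First I would reduce the supremum over all indices $m\le x$ to a supremum over dyadic blocks: for $2^j \le m < 2^{j+1}$ write $S_m = S_{2^j} + (S_m - S_{2^j})$, so that $\sup_{m}|S_m|$ is controlled by $\sup_j |S_{2^j}|$ plus $\sup_j \max_{2^j \le m < 2^{j+1}} |S_m - S_{2^j}|$. The first term is handled by summing over $j$: since $\big\|\sup_j |S_{2^j}|\big\|_2^2 \le \sum_j \big\|S_{2^j}-S_{2^{j-1}}\big\|_2^2 \cdot (\text{number of terms})$ type bounds, or more simply $\big(\int \sup_j|S_{2^j}|^2\big)^{1/2} \le \sum_j \big(\int |S_{2^j}-S_{2^{j-1}}|^2\big)^{1/2} = \sum_j \big(\sum_{2^{j-1}\le n<2^j}|c_n|^2\big)^{1/2}$, and by Cauchy–Schwarz in $j$ this is $\lesssim \big(\sum_j j^2 \sum_{2^{j-1}\le n < 2^j}|c_n|^2\big)^{1/2} \lesssim \|(c_n\log n)\|_2$.

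The second, and genuinely harder, term is the \emph{within-block} maximal function $\Delta_j := \max_{2^j\le m<2^{j+1}}\big|\sum_{n=2^j}^{m} c_n x_n\big|$. Here the key lemma is the classical Rademacher–Menchoff inequality for a maximal partial sum over a block of length $N$: $\int \max_{1\le m\le N}\big|\sum_{n=1}^m b_n y_n\big|^2 d\mu \le (\log_2 2N)^2 \sum_{n=1}^N |b_n|^2$, proved by a bisection/induction argument on $N$ (split the block in two halves, apply the triangle inequality in $L_2$, and bound the cross term using orthogonality). Applying this to each dyadic block of length $\sim 2^j$ gives $\int \Delta_j^2 \le C j^2 \sum_{2^j\le n<2^{j+1}}|c_n|^2$, and summing over $j$ yields $\big(\int \sup_j \Delta_j^2\big)^{1/2} \le \big(\sum_j \int\Delta_j^2\big)^{1/2} \lesssim \|(c_n \log n)\|_2$. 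Combining the two pieces proves \eqref{Kantorovitch}. I expect the within-block Rademacher–Menchoff lemma — specifically getting the logarithmic factor to come out with the right power through the bisection induction — to be the main obstacle; everything else is bookkeeping with Cauchy–Schwarz.

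Finally, the almost everywhere convergence: assume $(c_n\log n)\in\ell_2$. Let $T_k := \sum_{n\le k} c_n x_n$ and consider, for each $j$, the tail maximal function $M_j := \sup_{k,l \ge 2^j}|T_k - T_l|$. Applying the maximal inequality to the tail sequence $(c_n)_{n\ge 2^j}$ (with the indices relabelled, or simply noting that the proof above localizes to indices $\ge 2^j$) gives $\big\|M_j\big\|_2 \le C\big(\sum_{n\ge 2^j}|c_n\log n|^2\big)^{1/2} \to 0$ as $j\to\infty$, since $(c_n\log n)\in\ell_2$. Passing to a subsequence, $M_{j} \to 0$ almost everywhere, which forces $(T_k)$ to be Cauchy $\mu$-a.e.; hence $\sum c_n x_n$ converges $\mu$-almost everywhere. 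This completes the proof.
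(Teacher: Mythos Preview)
The paper does not supply its own proof of Theorem~\ref{MenchoffRademacher}; the result is quoted as classical with a reference to Alexits~\cite{Alexits}, and is then used as a black box in the proof of Theorem~\ref{HelsonII}. So there is nothing in the paper to compare your argument against.

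That said, your sketch is the standard proof and is essentially correct. The bisection induction you describe for the block lemma does give the recursion $M(2N)\le 1+M(N)$ (via $\max_m|S_m|\le |S_{N}|+\max(\text{left-half max},\text{right-half max})$, orthogonality for $\|S_N\|_2$, and $\max(A,B)^2\le A^2+B^2$), hence the constant $(\log_2 2N)^2$. The outer dyadic decomposition $\sup_m|S_m|\le \sup_j|S_{2^j}|+\sup_j\Delta_j$, together with Cauchy--Schwarz in $j$ against the weights $1/j$, then yields the Kantorovitch form~\eqref{Kantorovitch}. Two minor points: first, the statement as written has $\log 1=0$ on the right-hand side, so one should read $\log n$ as $\log(n+1)$ or treat the term $n=1$ separately; your estimates implicitly need this adjustment. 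Second, in the convergence step you need not pass to a subsequence: since $M_j$ is monotone decreasing in $j$, the convergence $\|M_j\|_2\to 0$ already forces $M_j\to 0$ $\mu$-almost everywhere.
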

        Does the Menchoff-Rademacher theorem imply Helson's theorem, and does Kantorovitch's inequality
        even add its relevant maximal inequality? \\

 \noindent
{\bf Ex 3:} \label{3}
Of course, for special orthonormal sequences $(x_n)$ the assertion of the Menchoff-Rademacher theorem  improves considerably, and the most celebrated  theorem
in this direction is certainly due to Carleson. In this case it was Hunt who after a careful analysis of Carleson's work, came up with what is now known as the Carleson-Hunt maximal inequality (see also Theorem  \ref{CarlesonHunt_p} for $1 <p< \infty$).

\begin{Theo} \label{CarlesonHunt}
   The Fourier series
   $
   f(z) = \sum_{n=0}^\infty \hat{f}(n) z^n
   $
   of every   $f \in H_2(\mathbb{T})$ converges almost everywhere on $\mathbb{T}$. Moreover, there is a (best) constant $\text{CH}_{2} >0$ such that
   \begin{equation*}
\Big( \int_{\mathbb{T}} \sup_{x} \Big| \sum_{n < x} \widehat{f}(n) z^n \Big|^{2} dz \Big)^{\frac{1}{2}} \le \text{CH}_{2}\,\|f\|_{2}.
\end{equation*}
  \end{Theo}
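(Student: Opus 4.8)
The plan is to obtain \emph{both} assertions from a single uniform bound for a linearized operator, running Carleson's time--frequency argument in the streamlined form later given by Fefferman and by Lacey--Thiele. Observe first that the Menchoff--Rademacher estimate \eqref{Kantorovitch} is useless here: it would demand $(\widehat f(n)\log n)\in\ell_2$ rather than merely $(\widehat f(n))\in\ell_2$, so one genuinely has to exploit the oscillatory structure of the characters $z\mapsto z^n$. Since $f\in H_2(\mathbb{T})$ has spectrum in $\mathbb N_0$, the reductions are standard: (i) the $a.e.$ convergence follows from the maximal inequality together with convergence on the dense set of trigonometric polynomials; (ii) with $e_k(z)=z^k$ and $P_+$ the Riesz projection one has $\sum_{n<N}\widehat f(n)z^n=f-e_N\,P_+(e_{-N}f)$, so $S^\ast f:=\sup_N\bigl|\sum_{n<N}\widehat f(n)z^n\bigr|\le|f|+\mathcal Cf$ pointwise, where $\mathcal Cf:=\sup_N|P_+(e_{-N}f)|$ is the \emph{Carleson operator}; and (iii) choosing a measurable $N\colon\mathbb{T}\to\mathbb Z$ realizing the supremum up to $\varepsilon$ linearizes $\mathcal C$ into an operator $T$, and it suffices to prove $\|T\|_{L_2\to L_2}\lesssim 1$ with a bound independent of the choice of $N(\cdot)$; if convenient one may pass to $\mathbb R$ by a periodization argument.

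To bound $T$ one tests against $g\in L_2$ and expands $\langle Tf,g\rangle$ over a dictionary of wave packets: decompose phase space into a lacunary family of grids of tiles $P=I_P\times\omega_P$ with $|I_P|\,|\omega_P|=1$, attach to each $P$ an $L_2$-normalized bump $\phi_P$ smoothly localized to $I_P$ with $\widehat{\phi_P}$ supported in $\omega_P$, and rewrite $\langle Tf,g\rangle$, modulo harmless tails, as $\sum_P\langle f,\phi_P\rangle\,\langle \mathbf 1_{E_P}\phi_P,\,g\rangle$, where $E_P=\{x\in I_P:\ N(x)\in\omega_P\}$ encodes the linearizing function. The problem is thereby reduced to a purely combinatorial estimate on this sum over tiles.

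The core is to organize the tiles into \emph{trees} --- collections sharing a common top tile --- and to sum tree by tree. Three ingredients are needed. First, a \emph{single-tree estimate}: for a tree $\mathcal T$ the corresponding partial sum is $\lesssim\mathrm{energy}(f;\mathcal T)\cdot\mathrm{mass}(g,N;\mathcal T)\cdot|I_{\mathcal T}|$, via Bessel-type orthogonality along the frequency-lacunary part of $\mathcal T$ plus a Calder\'on--Zygmund estimate for the frequency-constant part. Second, a \emph{greedy selection algorithm} sorting all tiles into $O(1)$ sub-collections (``forests'') on each of which the tops have bounded overlap and the energies and masses decay geometrically across dyadic scales. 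Third, \emph{global bounds}: the total energy is $\lesssim\|f\|_2^2$ by Bessel's inequality for the almost-orthogonal family $(\phi_P)$, and the total mass is $\lesssim\|g\|_2^2$ by the Hardy--Littlewood maximal inequality after a Whitney decomposition of the relevant super-level set. Summing the geometric series over the $O(1)$ forests gives $|\langle Tf,g\rangle|\lesssim\|f\|_2\|g\|_2$ uniformly in $N(\cdot)$; taking the supremum over $N(\cdot)$ yields $L_2$-boundedness of $\mathcal C$, hence the stated inequality with $\mathrm{CH}_2$ the (finite) norm of the maximal operator, and reduction (i) then delivers the $a.e.$ convergence.

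I expect the main obstacle to be exactly this last core step --- the simultaneous control of energy and mass through the greedy tree selection, resting on the almost-orthogonality ($TT^\ast$/Bessel) estimate that powers the single-tree bound. This is the genuinely deep content of the Carleson--Hunt theorem, and in a survey one would in practice invoke it rather than reproduce it. We also note that the \emph{numerical} value of the optimal constant $\mathrm{CH}_2$ appears to be unknown; ``best constant'' above should be read merely as the operator norm itself.
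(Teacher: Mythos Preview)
The paper does not prove Theorem~\ref{CarlesonHunt} at all: it is quoted in the introduction as a classical result (Carleson's theorem together with Hunt's maximal inequality) and is used as a black box. The paper's own contribution lies downstream, in Theorem~\ref{one}, where the one-variable Carleson--Hunt inequality is transported to $\mathbb{T}^\infty$ via Fefferman's trick (as in \cite{HedenmalmSaksman} and \cite{Duy}). So there is no ``paper's proof'' to compare your proposal against.

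That said, your proposal is a fair high-level outline of the Lacey--Thiele time--frequency proof of Carleson's theorem: the reductions (i)--(iii) are correct (in particular the identity $\sum_{n<N}\widehat f(n)z^n=f-e_NP_+(e_{-N}f)$ for $f\in H_2(\mathbb{T})$ is right), and the tree/energy/mass scheme is the genuine architecture of that argument. You are also right to flag that the selection-and-summation step is where all the depth lies, and that in a survey of this kind one invokes rather than reproduces it. Your remark on the optimal constant $\mathrm{CH}_2$ is accurate as well: the paper uses ``best constant'' simply to mean the operator norm of the maximal operator, not a known numerical value.

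In short: no gap, but also nothing to compare --- you are sketching a proof the paper deliberately imports rather than supplies.
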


Does this deep result give new input to Helson's theorem?
Indeed, look at functions $f \in H_2(\mathbb{T})$, or equivalently at  functions in $H_2(\mathbb{T}^\infty)$
which only depend on the first variable, and $u=0$.  Then by Carleson's theorem
every  Dirichlet series $\sum a_n  \chi(n) n^{-s}$, which is 'thin' in the sense that  $a_n \neq 0$ only if $n=2^j$ for some $j$,  converges for almost all $\chi \in \Xi$
in $s=0$, and consequently for almost all  $\chi$ also on  the right half-plane.
Does this result extend to all
$D \in \mathcal{H}_2$? Equivalently, does \eqref{HelsonA} hold for $u=0$? Up to which extent is it possible to have some  Carleson's theorem  for functions on the infinite dimensional torus?\\

 \noindent
{\bf Ex 4:}  \label{4}
Finally, we  turn to another fundamental theorem on ordinary Dirichlet series.
For its formulation we recall that the Banach space of all Dirichlet series
 $D= \sum a_{n}n^{-s}\,,$ which converge on the right  half-plane to a bounded (and then necessarily holomorphic) function $f$, is denoted by  $\mathcal{D}_\infty$ (the norm given by the sup norm on the right half-plane ).
 The following result due to Bohr \cite{Bohr} rules the theory of such  series.

  \begin{Theo} \label{Bohr}
   Let $D= \sum a_{n}n^{-s} \in \mathcal{D}_\infty$. Then $D$ for each $u >0$ converges uniformly on
   $[\text{Re}> u]$. Even more, there is a constant $C >0$ such that for every $x>1$ we have
   \[
   \sup_{\text{Re} s >0} \Big| \sum_{n=1}^x a_n n^{-s} \Big| \leq C\,\log x \sup_{\text{Re}  s >0} | f(s) |\,.
   \]
    \end{Theo}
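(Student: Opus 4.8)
The plan is to prove the quantitative inequality; the uniform convergence then drops out by summation by parts. Three ingredients are needed: a bound on the Dirichlet coefficients, a Perron-type integral representation of the truncated series, and a careful estimate of the resulting contour integral, in which the logarithm is born. First I would record $|a_n|\le M:=\sup_{\text{Re}\,s>0}|f(s)|$ for all $n$: since $D$ converges on $[\text{Re}>0]$, the Bohr mean value formula $a_n\,n^{-\sigma}=\lim_{T\to\infty}\frac1{2T}\int_{-T}^{T}f(\sigma+it)\,n^{it}\,dt$ holds for every $\sigma>0$, and letting $\sigma\downarrow 0$ gives the bound. In particular the series converges absolutely on $[\text{Re}\,s>1]$, and moreover $(a_n)\in\ell_2$ with $\|(a_n)\|_2\le M$ through the Bohr lift $H_\infty(\mathbb{T}^{\infty})\hookrightarrow H_2(\mathbb{T}^{\infty})$, which is the tool that lets one reach the line $\text{Re}\,s=1/2$.

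Next, fix $s$ with $\text{Re}\,s=\sigma>0$ and $x>1$, $x\notin\mathbb{N}$. For an abscissa $c>0$ and a truncation height $T$, Perron's formula yields
\[
\sum_{n\le x}a_n n^{-s}\;=\;\frac1{2\pi i}\int_{c-iT}^{c+iT}f(s+w)\,\frac{x^{w}}{w}\,dw\;+\;E(s,x,c,T),
\]
and the heart of the matter is to choose $c\asymp 1/\log x$ (so $x^{c}\asymp 1$) and $T$ of polynomial size in $x$ in such a way that $E$ is $O(M)$ while, using only $|f|\le M$ on the line $\text{Re}(s+w)=\sigma+c>0$, the contour integral is bounded by
\[
M\,x^{c}\cdot\frac1{2\pi}\int_{-T}^{T}\frac{dt}{|c+it|}\;\ll\;M\,\log\!\Big(\tfrac{T}{c}\Big)\;\ll\;M\,\log x.
\]
This is exactly where the factor $\log x$ enters: it is the $L^{1}$-size of the Dirichlet kernel governing the truncation operator $D\mapsto\sum_{n\le x}a_n n^{-s}$, which under the Bohr lift of the previous section is convolution on $\mathbb{T}^{\infty}$ against $\sum_{\mathfrak{p}^{\alpha}\le x}z^{\alpha}$.

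To obtain the uniform convergence, let $\text{Re}\,s\downarrow 0$ in the inequality to get $\sup_{t\in\mathbb{R}}|P_N(t)|\le CM\log N$, where $P_n(t):=\sum_{m\le n}a_m m^{-it}$; then, for $N>M$ and $\text{Re}\,s=\sigma\ge u>0$, summation by parts gives
\[
\sum_{M<n\le N}a_n n^{-s}=P_N(t)N^{-\sigma}-P_M(t)(M{+}1)^{-\sigma}+\!\!\sum_{M<n<N}\!\!P_n(t)\big(n^{-\sigma}-(n{+}1)^{-\sigma}\big),
\]
and since $0\le n^{-\sigma}-(n{+}1)^{-\sigma}\le\sigma n^{-\sigma-1}\le u\,n^{-u-1}$ for all large $n$ (uniformly in $\sigma\ge u$), the right-hand side tends to $0$ as $M\to\infty$ uniformly on $[\text{Re}\,s\ge u]$. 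Hence $D$ converges uniformly on $[\text{Re}>u]$ for every $u>0$.

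The genuine obstacle is the estimate of the contour integral in the second step. Once $c$ is pressed close to the imaginary axis, $\sum_n a_n n^{-(s+w)}$ converges on the contour only conditionally, so the elementary form of Perron's formula is unavailable: the error $E$ must be handled by hand, using $(a_n)\in\ell_2$ and a Cauchy--Schwarz estimate valid on $\text{Re}(s+w)>1/2$, together with a splitting point $c$ and a height $T$ tuned to the size of $\sigma$. This bookkeeping is delicate, and at bottom it is the statement that the Dirichlet kernel $\sum_{\mathfrak{p}^{\alpha}\le x}z^{\alpha}$ on $\mathbb{T}^{\infty}$ has $L^{1}$-norm of the exact order $\log x$; that fact --- together with the matching lower bound, which shows the order $\log x$ cannot be improved --- is the technical core of Bohr's theorem.
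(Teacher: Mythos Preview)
The paper gives no proof of this theorem: it is quoted in the introduction as a classical result of Bohr, with a reference to \cite{Bohr}, and then used as a black box (in particular to establish $\mathcal{H}_\infty=\mathcal{D}_\infty$ in \eqref{bruce}). So there is nothing in the paper to compare your proposal against.

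Your plan is the standard Perron route and is correct in outline, but the final paragraph builds in an unnecessary difficulty and a circularity. Having $|a_n|\le M$ already gives absolute convergence on $[\text{Re}>1]$, so one applies the effective Perron formula on a line $\text{Re}\,w=c$ with $\sigma+c>1$ --- where the error term is controlled by $|a_n|\le M$ alone --- and then \emph{shifts} the vertical contour to $\text{Re}\,w=1/\log x$ using only $|f|\le M$ on the right half-plane; the horizontal pieces contribute $O(Mx^{c}/T)$ and are negligible for $T$ a suitable power of $x$. One never has to run Perron on a line of mere conditional convergence, and no appeal to $(a_n)\in\ell_2$ or Cauchy--Schwarz is needed. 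More seriously, your invocation of ``the Bohr lift $H_\infty(\mathbb{T}^\infty)\hookrightarrow H_2(\mathbb{T}^\infty)$'' to obtain $(a_n)\in\ell_2$ is circular in the logic of this survey: here the identification $\mathcal{D}_\infty=\mathcal{H}_\infty$ is a \emph{consequence} of Bohr's theorem, not an input to it. (Carlson's mean-square formula $\sum_n|a_n|^2 n^{-2\sigma}=\lim_T\frac{1}{2T}\int_{-T}^{T}|f(\sigma+it)|^2\,dt\le M^2$, followed by $\sigma\downarrow 0$, would give $(a_n)\in\ell_2$ without circularity --- but, as noted, you do not actually need it.) Your Abel-summation passage from the $\log x$ bound to uniform convergence on each $[\text{Re}>u]$ is fine.
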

    We intend to explain in which sense this result can be seen as a sort  of extreme case of a scale of Helson-like theorems.\\

We hope to convince our reader that the  above apparently quite different theorems have a lot in common. Inspired through the work of Bayart \cite{Bayart}, Bohr \cite{Bohr}, Duy \cite{Duy},
         Hardy-Riesz \cite{HardyRiesz},
        Hedenmalm-Lindqvist-Seip \cite{HLS},
                   Hedenmalm-Saksman \cite{HedenmalmSaksman}, and Helson \cite{Helson3}, among others,  we want to sketch that they in  fact are  intimately linked.
         And  this picture gets  visible if one looks at the above theorems  within the more general scale  of Hardy spaces $\mathcal{H}_p, 1 \leq p \leq \infty$, of ordinary  Dirichlet series as invented by Bayart  in \cite{Bayart} (see Section \ref{Hardyspace}).

        In particular, we want to discuss several results from  our  ongoing research project on  general (!) Dirichlet series
        $\sum_n a_n e^{-\lambda_n s}$ (see \cite{DefantSchoolmann},  \cite{DefantSchoolmann2}, \cite{DefantSchoolmann3}, and  \cite{Schoolmann}), although the present  survey entirely focuses  on ordinary Dirichlet series
        $\sum_n a_n n^{-s}$ only.
        Indeed, many  of the results which we are going to discuss even hold in the
         much wider  setting of general Dirichlet series and their related Fourier analysis on so-called
         Dirichlet groups.

          Why do we here restrict ourself to the ordinary case? In fact we believe that our topic for ordinary Dirichlet series  is interesting in itself, and more important, we  hope that for a reader who is merely interested in the ordinary case, our presentation is particularly useful since it is not  covered by technical difficulties which   are unavoidable in the much wider framework of general Dirichlet series.

           Our survey has eight  sections:  Helson meets Menchoff-Rademacher,  Riemann,  Carleson-Hunt,  Bohr,  Ces\`{a}ro,  Hardy-Riesz,
          Hardy-Littlewood, and  Banach.

\section{Preliminaries}
For all information on  Dirichlet series we refer to the monographs \cite{Defant}, \cite{HardyRiesz}, \cite{Helson}, or \cite{QQ}. In the following we focus on a few facts of particular interest.

\subsection{Kronecker flow} \label{flow}
   The continuous group homomorphism, the so-called Kronecker flow,
\begin{equation} \label{flow}
\beta: \mathbb{R}  \rightarrow \mathbb{T}^\infty\,, \,\,\,\, t \mapsto (p_k^{-it})_{k=1}^\infty
\end{equation}
has dense range.
 Recall that the dual group $\widehat{\mathbb{T}^\infty}$ equals the group $\mathbb{Z}^{(\mathbb{N})}$ of all finite sequences $\alpha$ with entries from $\mathbb{Z}$  in the sense
 that every such $\alpha$ may be identified with the character $z^\alpha$.
  Then for every character $z^\alpha \in \widehat{\mathbb{T}^\infty}$ we have that  $x =  \log \mathfrak{p}^\alpha$
  is the unique real number for which
  $e^{-i x \pmb{\cdot}} = z^\alpha \circ \beta $.
In other words, $\widehat{\mathbb{T}^\infty}$ and
$\{\log \mathfrak{p}^\alpha\colon \alpha \in \mathbb{Z}^{(\mathbb{N})} \}$
can be identified (as sets), and the natural order of $\mathbb{R}$
transfers to $\widehat{\mathbb{T}^\infty}$:
\begin{equation} \label{order}
\alpha \in \mathbb{Z}^{(\mathbb{N})}  \ge 0 \,\,\,\,\, \text{if}\,\,\,\,\, \log \mathfrak{p}^\alpha \ge 0.
\end{equation}
Moreover, recall for $u>0$ the definition of the Poisson kernel $P_u (t) = \frac{1}{\pi} \frac{u}{t^2 +u^2}$ on $\mathbb{R}$ which has $e^{-u |\pmb{\cdot}|}$ as its Fourier transform.
The push forward measure of $P_u$ under the Kronecker flow $\beta: \mathbb{R} \to \mathbb{T}^\infty$
is denoted  by $p_u$. We have that $\widehat{p_u}(\alpha) = \widehat{P_u}(\log \mathfrak{p}^\alpha) = e^{-u |\log\mathfrak{p}^\alpha |}$ for every $\alpha \in \mathbb{Z}^{(\mathbb{N})} = \widehat{\mathbb{T}^\infty}$.

\subsection{Hardy spaces} \label{Hardyspace}

 Bayart in \cite{Bayart} initiated an  $\mathcal{H}_p$-theory of ordinary Dirichlet series. Recall that  $H_p(\mathbb{T}^\infty)\,, \, 1 \leq p \leq \infty$, is the  closed subspace  of all $f \in L_p(\mathbb{T}^\infty)$
 which have a Fourier transforms $\hat{f}: \mathbb{Z}^{(\mathbb{N})} \rightarrow \mathbb{C}$ supported on $\mathbb{N}_0^{(\mathbb{N})}$. Then the  Banach spaces $\mathcal{H}_p, 1 \leq p \leq \infty$, consist of all  ordinary Dirichlet series $\sum a_n n^{-s}$ for which there is some (unique) $f \in H_p(\mathbb{T}^\infty)$ such that $a_n = \hat{f}(\alpha)$ for all $\alpha \in \mathbb{N}_0^{(\mathbb{N})}$ with  $\mathfrak{p}^\alpha = n$.
 Together with the norm $\|D\|_p = \|f\|_p$ this leads to Banach spaces. Hence by the very definition the so-called
 Bohr transform
 \begin{equation}\label{Bohrtrafo}
 \mathfrak{B}: H_p(\mathbb{T}^\infty) \to \mathcal{H}_p, \,\,\,f \mapsto D\,,
 \end{equation}
 where $D = \sum a_n n^{-s}$ with $a_n = \hat{f}(\alpha)$ for all $\alpha \in \mathbb{N}_0^{(\mathbb{N})}$ with  $\mathfrak{p}^\alpha = n$, is an isometric linear bijection.
  For every Dirichlet polynomial $D = \sum_{n \leq x} a_n n^{-s}$ we for $1 \leq p < \infty$ have that
 \begin{equation} \label{norm}
 \|D\|_p = \lim_{T \to \infty} \frac{1}{2T}  \Big(\int_{-T}^T
 \big|
  \sum_{n \leq x} a_n n^{-it} \big|^p dt\Big)^{\frac{1}{p}}\,,
 \end{equation}
 and hence in this case it turns out that  the completion of  the linear space of all  Dirichlet polynomials under this norm gives
 precisely  the Banach space $\mathcal{H}_p$.

 Obviously, a Dirichlet series $D = \sum a_n n^{-s}$ belongs to the Hilbert space $\mathcal{H}_2$ if and only if
 the sequence  $(a_n)$ of its Dirichlet coefficients
 belongs to $\ell_2$.

 Moreover, as a consequence of Theorem \ref{Bohr} it can be shown that $\mathcal{H}_\infty$
 and $\mathcal{D}_\infty$ coincide as Banach spaces,
 \begin{equation} \label{bruce}
 \mathcal{H}_\infty =\mathcal{D}_\infty \,\,\, \text{isometrically}\,.
 \end{equation}
  This  fundamental fact  was  observed in \cite{HLS} (see also \cite[Corollary 5.3]{Defant}), and we will come back to it in Section \ref{H_B}.

 In contrast to the situation for $\mathcal{H}_\infty$, arbitrary  Dirichlet series in $\mathcal{H}_p, 1 \leq p < \infty$, only converge pointwise  on $[\text{Re} > 1/2]$ (even  absolutely), and in general this half-plane
 can not be replaced by a bigger one
 (see e.g \cite[Remark 12.13 and Theorem 12.11]{Defant}). We remark that by a result from \cite{HedenmalmSaksman}
 each Dirichlet series $D$ in $\mathcal{H}_2$ converges almost everywhere on the abscissa $[\text{Re} = 1/2]$
 (and consequently also every $D \in \mathcal{H}_p,\, 2 \leq p < \infty$), whereas
 Bayart
 in \cite{Bayart2} gave an example of a Dirichlet series in $\mathcal{H}_\infty$ that diverges at every point of the imaginary axis.
 It seems that
   for  Dirichlet series in $\mathcal{H}_p$, $1 \leq p < 2$,
  there is no such precise knowledge on pointwise convergence   on the abscissa $[\text{Re} = 1/2]$.

 The horizontal
  translation  of a Dirichlet series $D = \sum a_n n^{-s}$ about $u>0$ is defined to be the Dirichlet series
$$D_{u}:=\sum \frac{a_{n}}{n^u} n^{-s}.$$
Given $D \in \mathcal{H}_p$,
 then $\Bcal(f*p_{u})=D_{u}$, which in particular shows that  $D_{u}\in \mathcal{H}_{p}$.

 But more can be said: For each $1 \leq p,q < \infty$
and $u>0$ there is a constant $E=E(u,p,q)$ such that for each $D \in \mathcal{H}_p$ we have
\begin{equation} \label{hypo}
D_{u} \in \mathcal{H}_q \,\,\, \text{ and }\,\,\,\|D_{u}\|_q \leq E  \|D\|_p\,.
\end{equation}
This result is basically due to Bayart \cite{Bayart}, for a self-contained proof see
\cite[Theorem 12.9]{Defant}; we refer to this  fact as the the 'hypercontractivity' of the Hardy spaces $\mathcal{H}_p$.

 \subsection{Vertical limits}
 Given a   Dirichlet series $D = \sum a_n n^{-s}$, then we call
the Dirichlet series $D_{z}(s):=\sum \frac{a_{n}}{n^z} n^{-s}$ the   translation of $D$ about $z \in \mathbb{C}$,
and  each Dirichlet series of the form $$D^{\chi}=\sum a_{n} \chi(n) n^{-s}\,,\,\, \chi \in \Xi$$
is said to be a vertical limits of $D$. Examples are vertical translations
$D_{i\tau}=\sum a_{n} n^{-i\tau}  n^{-s}$ with $\tau \in \mathbb{R}$,
and the terminology is explained by the fact that each vertical limit  may be approximated by  vertical translates. More precisely, given $D = \sum a_n n^{-s}$ which converges absolutely on the right half-plane , for every  $\chi \in \Xi$ there is a sequence $(\tau_{k})_{k} \subset \R$ such that $D_{i\tau_{k}}$ converges to $D^{\chi}$ uniformly on $[Re>\varepsilon]$ for all $\varepsilon>0$.
 Assume conversely that for  $(\tau_{k})_{k} \subset \R$ the  vertical translations $D^{i\tau_k}$ converge
 uniformly on $[Re>\varepsilon]$ for every  $\varepsilon>0$
 to a holomorphic function $f$ on $[Re>0]$. Then there is $\chi \in \Xi$ such that
    $f(s)= \sum_{n=1}^\infty a_n \chi(n) n^{-s}$
    for all $s \in [Re>0]$\,. For this see \cite[Section 4.1]{DefantSchoolmann}.

    It is simple to show that each vertical limit $D^\chi$ belongs to $\mathcal{H}_p$ if and only if
    $D$ does, and the norms remain the same (apply Bohr transform and use the rotation invariance of the Lebesgue measure on $\mathbb{T}^\infty$).

 Finally, we recall that every function $f\in L_{1}(\mathbb{T}^\infty)$ for almost all $z \in \mathbb{T}^\infty$ allows a locally Lebesgue integrable 'restriction' $f_{z} \colon \R \to \C$ such that $f_{z}(t)=f(z \beta(t))$ for almost all $t\in \R$ (see \cite[Lemma 3.10]{DefantSchoolmann}). More explicitly, for almost all $z \in \mathbb{T}^\infty$ the function
 \begin{equation} \label{flowflow}
 f_z : \mathbb{R}  \to \mathbb{C},\,\, f_z(t) = f\big( \mathfrak{p}^{-it} z \big)
 \end{equation}
 is locally integrable.

 Given $f \in H_1(\mathbb{T}^\infty)$, the  family  $(f_z)_{z \in \mathbb{T}^\infty}$ of functions on $\mathbb{R}$ form a sort of  bridge to tools from Fourier analysis on $\R$. The following simple lemma
   (see \cite{DefantSchoolmann3}) shows how pointwise convergence   on $\mathbb{T}^\infty$ is related  with pointwise convergence  on $\R$.

\begin{Lemm} \label{hedesaks*} Let $f_n, f \in H_1(\mathbb{T}^\infty)$. Then the following are equivalent:
\begin{itemize}
\item[(1)]
$\lim_{n\to \infty} f_n(z) = f(z)$ \,\,\, \text{for almost all $z \in \mathbb{T}^\infty$}
\vspace{1mm}
\item[(2)]
$\lim_{n\to \infty} (f_n)_z(t)= f_z(t)$ \,\,\,
\text{for almost all $z \in \mathbb{T}^\infty$ and for  almost all $t\in \R$\,.}
\end{itemize}
In particular, if all $f_n$ are polynomials and $D_n \in \mathcal{H}_1$
 are the Dirichlet series  associated  to $f_n$ under Bohr's transform, then $(1)$  and $(2)$ are equivalent to each of the
 following two further statements:
\begin{itemize}
\item[(3)]
$\lim_{n\to \infty} D^\chi_n(0)= f(\chi (\mathfrak{p}))$ \,\,\,
\text{for almost all $\chi \in \Xi$}
\vspace{1mm}
\item[(4)]
$\lim_{n\to \infty} D^\chi_n(it)= f\Big( \frac{\chi(\mathfrak{p})}{\mathfrak{p}^{it}}\Big)$ \,\,\,
\text{for almost all $\chi \in \Xi$ and for  almost all $t\in \R$}\,.
\end{itemize}
\end{Lemm}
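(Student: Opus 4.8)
**Proof proposal for Lemma \ref{hedesaks*}.**

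The plan is to first settle the equivalence of $(1)$ and $(2)$, which is the genuine content, and then to obtain $(3)$ and $(4)$ as essentially bookkeeping once the polynomial hypothesis is in force. For $(1)\Leftrightarrow(2)$, the natural vehicle is the map $(z,t)\mapsto z\beta(t)$ on $\mathbb{T}^\infty\times\mathbb{R}$ together with the observation recalled in \eqref{flowflow} that for $f\in H_1(\mathbb{T}^\infty)$ one has $f_z(t)=f(\mathfrak{p}^{-it}z)$ for almost all $t$, for almost all $z$. Applying this to each $f_n$ and to $f$ simultaneously (a countable intersection of full-measure sets is still full measure), we may fix a full-measure set of $z\in\mathbb{T}^\infty$ on which all the identities $f_z(t)=f(\mathfrak{p}^{-it}z)$ and $(f_n)_z(t)=f_n(\mathfrak{p}^{-it}z)$ hold for a.e.\ $t$. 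The direction $(1)\Rightarrow(2)$ is then immediate: if $f_n\to f$ off a null set $N\subset\mathbb{T}^\infty$, then for fixed $z$ the exceptional parameters $t$ are those with $\mathfrak{p}^{-it}z\in N$, and by the density of the Kronecker flow together with the quasi-invariance (indeed invariance under the translations $w\mapsto \mathfrak{p}^{-it}w$ of the Haar measure) the set of such $t$ is Lebesgue-null for a.e.\ $z$; here one invokes a Fubini argument on $\mathbb{T}^\infty\times\mathbb{R}$ with an appropriate local (Poisson, or simply Lebesgue-on-a-bounded-interval) weight, exactly as in the passage leading to \eqref{flowflow}.

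For the reverse direction $(2)\Rightarrow(1)$, the point is that one cannot evaluate $f$ at a single point of $\mathbb{T}^\infty$ from the flow restrictions directly, but one does not need to: knowing $(f_n)_z(t)\to f_z(t)$ for a.e.\ $t$ and a.e.\ $z$ means, via the same identification, that $f_n(\mathfrak{p}^{-it}z)\to f(\mathfrak{p}^{-it}z)$ for a.e.\ pair, and the map $(z,t)\mapsto \mathfrak{p}^{-it}z$ pushes the product of Haar measure and (localized) Lebesgue measure to a measure absolutely continuous with respect to — in fact equivalent to — Haar measure on $\mathbb{T}^\infty$ on each slice. Integrating out $t$ against a fixed probability weight on a bounded interval and using Fubini once more yields a full-measure set of $w=\mathfrak{p}^{-it}z\in\mathbb{T}^\infty$ on which $f_n(w)\to f(w)$, which is $(1)$. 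The main obstacle, and the step that needs care rather than cleverness, is precisely this measure-theoretic transfer: one must justify that ``a.e.\ $z$ and a.e.\ $t$'' can be converted into ``a.e.\ $w$'' without losing a null set, which rests on the fact that the Kronecker flow combined with rotations of $\mathbb{T}^\infty$ acts measure-preservingly and ergodically, so that a flow-invariant-up-to-translation null set is genuinely Haar-null.

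Finally, under the extra hypothesis that every $f_n$ is a polynomial with associated Dirichlet polynomial $D_n=\mathfrak{B}(f_n)$, the Bohr correspondence identifies $\Xi$ with $\mathbb{T}^\infty$ via $\iota$ in \eqref{idy} and gives, for every $\chi$, the pointwise identity $D_n^\chi(0)=\sum_n a_n\chi(n)=f_n(\chi(\mathfrak{p}))$ — a finite sum, hence no convergence issue — so $(1)$ is literally the statement $(3)$ after the identification $z=\chi(\mathfrak{p})$. Likewise $D_n^\chi(it)=\sum a_n\chi(n)n^{-it}=f_n(\chi(\mathfrak{p})\mathfrak{p}^{-it})=(f_n)_{\chi(\mathfrak{p})}(t)$ for the polynomial $f_n$, so $(4)$ is a verbatim reading of $(2)$ under the same dictionary; one only records that the null sets in $\Xi$ and in $\mathbb{T}^\infty$ correspond under the measure-preserving isomorphism $\iota$. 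Thus $(3)$ and $(4)$ require no new analysis beyond the already-established $(1)\Leftrightarrow(2)$.
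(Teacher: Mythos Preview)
The paper does not actually supply a proof of this lemma; it is stated and attributed to \cite{DefantSchoolmann3}. So there is no in-paper argument to compare against, and I can only assess your proposal on its own merits.

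Your argument is correct in substance. The equivalence $(1)\Leftrightarrow(2)$ is exactly the Fubini manoeuvre you describe: fix Borel representatives of $f$ and of each $f_n$, set $M=\{w\in\mathbb{T}^\infty: f_n(w)\not\to f(w)\}$, and use that for every fixed $t$ the rotation $w\mapsto \mathfrak{p}^{-it}w$ preserves Haar measure, so that
\[
\int_{-T}^{T}\int_{\mathbb{T}^\infty} \mathbf{1}_M(\mathfrak{p}^{-it}z)\,dz\,dt \;=\; 2T\,|M|.
\]
This single identity immediately gives both directions, and your treatment of $(3)$ and $(4)$ via the isomorphism $\iota$ and the finiteness of the sums for polynomial $f_n$ is entirely adequate.

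Two small remarks. First, the appeals to ``density of the Kronecker flow'' and to ergodicity are unnecessary and slightly misleading: nothing beyond translation invariance of Haar measure and Fubini (on $\mathbb{T}^\infty\times[-T,T]$, then letting $T\to\infty$) is used, and the argument would go through verbatim for any measure-preserving $\mathbb{R}$-action. Second, when you say the push-forward under $(z,t)\mapsto\mathfrak{p}^{-it}z$ is ``absolutely continuous with respect to --- in fact equivalent to --- Haar measure on each slice'', what you really use (and what the displayed computation above makes explicit) is the stronger and simpler fact that each $t$-slice is measure-preserving; phrasing it as you do invites the reader to look for a Jacobian that is not there.
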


     \section{Helson meets Menchoff-Rademacher}
     Let us come back to Helson's theorem from Theorem \ref{Helson} which states that almost all vertical
     limits $D^\chi$ for Dirichlet series $D \in \mathcal{H}_2$ converge on the right half-plane. Helson's original proof is mainly based on classical tools from Fourier analysis, as e.g. the
     Fourier inversion theorem or  the Hausdorff-Young inequality.

     How does the relevant maximal inequality for Helson's theorem look like?
    Inspired by an idea of Bayart \cite{Bayart} we  isolate  this maximal inequality using Kantorovitch's maximal inequality  \eqref{Kantorovitch} from the Menchoff-Rademacher theorem.

     \begin{Theo} \label{HelsonII}
     For every $u > 0$  there is a constant $C = C(u) >0$ such that for every
     $D \in \mathcal{H}_2$ we have
     \begin{equation*}
 \Big(\int_{\Xi} \sup_{x} \Big| \sum_{n=1}^x \frac{a_n}{n^u}\chi(n)\Big|^2\Big)^{\frac{1}{2}} d\chi  \le C
 \|D\|_{2}.
 \end{equation*}
 Equivalently,  for every
     $f \in H_2(\mathbb{T}^\infty)$
      \begin{equation*}
      \Big( \int_{\mathbb{T}^\infty} \sup_{x} \Big| \sum_{\mathfrak{p}^\alpha < x}
      \frac{\widehat{f}(\alpha)}{\mathfrak{p}^{u\alpha } }
      z^\alpha \Big|^{2} dz \Big)^{\frac{1}{2}} \le C\|f\|_{2}.
\end{equation*}
\end{Theo}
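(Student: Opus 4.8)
The plan is to obtain both displayed estimates at once from Kantorovitch's maximal inequality \eqref{Kantorovitch} inside the Menchoff--Rademacher theorem (Theorem~\ref{MenchoffRademacher}), applied to a suitably weighted orthonormal system. First I would note that the two inequalities are really the \emph{same} assertion, transported through the identification $\iota$ of \eqref{idy}: writing $z=\chi(\mathfrak{p})$ one has $\chi(n)=z^{\alpha}$, $n^{-u}=\mathfrak{p}^{-u\alpha}$ and $a_{n}=\widehat{f}(\alpha)$ whenever $n=\mathfrak{p}^{\alpha}$ with $\alpha\in\mathbb{N}_{0}^{(\mathbb{N})}$, while $d\chi$ is the push-forward of $dz$ and $\|D\|_{2}=\|(a_{n})\|_{\ell_{2}}=\|f\|_{2}$. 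So it suffices to prove the inequality for $D\in\mathcal{H}_{2}$.

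The crucial point is that the characters $e_{n}\colon\Xi\to\mathbb{T}$, $e_{n}(\chi)=\chi(n)$, $n\in\mathbb{N}$, form an orthonormal system in $L_{2}(\Xi)$: through $\iota$ they become the monomials $z\mapsto z^{\alpha}$ with $\mathfrak{p}^{\alpha}=n$, and since the prime factorization $n\leftrightarrow\alpha$ is a bijection of $\mathbb{N}$ onto $\mathbb{N}_{0}^{(\mathbb{N})}$, distinct $n$ give distinct monomials, which are orthonormal in $L_{2}(\mathbb{T}^{\infty})$. Now fix $u>0$ and $D=\sum a_{n}n^{-s}\in\mathcal{H}_{2}$, i.e. $(a_{n})\in\ell_{2}$, and set $c_{n}:=a_{n}/n^{u}$, so that $(c_{n})\in\ell_{2}$ as well. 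The partial sums $\sum_{n=1}^{x}c_{n}e_{n}$ only change at integer arguments, hence $\sup_{x}$ over the reals equals the supremum over $\mathbb{N}$, and applying \eqref{Kantorovitch} to the orthonormal series $\sum_{n}c_{n}e_{n}$ produces a universal constant $C_{0}$ with
\[
\Big(\int_{\Xi}\sup_{x}\Big|\sum_{n=1}^{x}\frac{a_{n}}{n^{u}}\chi(n)\Big|^{2}d\chi\Big)^{1/2}\le C_{0}\,\Big\|\Big(\frac{a_{n}}{n^{u}}\log n\Big)\Big\|_{2}.
\]

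It remains to absorb the weight: for each fixed $u>0$ one has $M_{u}:=\sup_{n\ge 1}(\log n)/n^{u}<\infty$ by elementary calculus (with $M_{u}\to\infty$ as $u\downarrow 0$, consistently with the open status of the case $u=0$ in Ex~3), whence
\[
\Big\|\Big(\frac{a_{n}}{n^{u}}\log n\Big)\Big\|_{2}\le M_{u}\,\|(a_{n})\|_{\ell_{2}}=M_{u}\,\|D\|_{2},
\]
and the theorem follows with $C=C(u)=C_{0}M_{u}$. I do not anticipate a real obstacle; the only step deserving a line of care is the orthonormality of $(e_{n})_{n}$, i.e. recasting uniqueness of prime factorization as a statement about the dual group of $\mathbb{T}^{\infty}$. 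As a byproduct one recovers Helson's theorem (Theorem~\ref{Helson}): the inequality in particular gives $(a_{n}(\log n)/n^{u})\in\ell_{2}$ for every $u>0$, so by the convergence part of Theorem~\ref{MenchoffRademacher} the series $\sum_{n}a_{n}\chi(n)n^{-u}$ converges for almost every $\chi$, hence $D^{\chi}$ converges on $[\mathrm{Re}>u]$ for almost every $\chi$; intersecting over $u=1/k$, $k\in\mathbb{N}$, yields convergence of $D^{\chi}$ on all of $[\mathrm{Re}>0]$ for almost every $\chi$.
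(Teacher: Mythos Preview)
Your proof is correct and follows essentially the same route as the paper: apply Kantorovitch's maximal inequality \eqref{Kantorovitch} to the orthonormal system $\chi\mapsto\chi(n)$ in $L_{2}(\Xi)$ with coefficients $c_{n}=a_{n}/n^{u}$, and then absorb the weight via $\sup_{n}(\log n)/n^{u}<\infty$. The paper's argument is just a more compressed version of yours, and your added remarks (explicit orthonormality, the constant $M_{u}$, the deduction of Theorem~\ref{Helson}) are all accurate elaborations.
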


          \begin{proof}
          Clearly, the functions $\Xi \to \mathbb{C}, \chi \mapsto \chi(n)$ form an orthonormal
          system in $L_2(\Xi)$. Hence by Theorem \ref{MenchoffRademacher}
          for all $a_1, \ldots, a_x \in \mathbb{C}$
          \begin{align*}
          \Big( \int_{\Xi} \sup_{x} \Big| \sum_{n=1}^x \frac{a_n}{n^u}\chi(n)\Big|^{2} d\chi \Big)^{\frac{1}{2}}
          \ll \Big\|\big(a_n \frac{\log n}{n^{u}}\big)\Big\|_2
                     \ll \big\|(a_n)\big\|_2  =  \|D\|_{2}\,.
          \end{align*}
         The second assertion is an obvious reformulation through Bohr's transform \eqref{Bohrtrafo}.
                  \end{proof}

      But applying the hypercontractivity estimate from \eqref{hypo}
     we even get all this for the much  larger class of Dirichlet series in  $\mathcal{H}_1$
      (apart from the maximal inequality this was  observed  in \cite[Theorem 6]{Bayart}).

     \begin{Theo} \label{MRK}
     For every $u > 0$  there is a constant $C = C(u) >0$ such that for every
     $D \in \mathcal{H}_1$ we have
     \begin{equation*}
\int_{\Xi} \sup_{x} \Big| \sum_{n=1}^x \frac{a_n}{n^u}\chi(n)\Big| d\chi  \le C
 \|D\|_{1}.
 \end{equation*}
 Equivalently,  for every
     $f \in H_1(\mathbb{T}^\infty)$
      \begin{equation*}
     \int_{\mathbb{T}^\infty} \sup_{x} \Big| \sum_{\mathfrak{p}^\alpha < x}
      \frac{\widehat{f}(\alpha)}{\mathfrak{p}^{u\alpha } }
      z^\alpha \Big| dz  \le C\|f\|_{1}.
\end{equation*}
\end{Theo}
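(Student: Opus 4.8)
The plan is to deduce Theorem~\ref{MRK} from Theorem~\ref{HelsonII} by a hypercontractivity argument, exactly in the spirit of how the $\mathcal{H}_2$-result was promoted in the remark following it. The starting point is the observation that for $D\in\mathcal{H}_1$ and any $u>0$, the horizontal translate behaves very well: writing $u=u_1+u_2$ with $u_1,u_2>0$, the estimate \eqref{hypo} (with $p=1$, $q=2$) gives $D_{u_1}\in\mathcal{H}_2$ together with $\|D_{u_1}\|_2\le E(u_1,1,2)\,\|D\|_1$. This is the only place where the passage from $p=1$ to $p=2$ is needed, and it is precisely what makes the Menchoff--Rademacher input available.

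First I would reduce to Dirichlet polynomials: both sides of the claimed inequality are continuous (the right-hand side is the $\mathcal{H}_1$-norm, and the left-hand side is, by monotone convergence in the supremum over $x$, a lower semicontinuous functional of $D$ that is dominated by it once the polynomial estimate is known), so it suffices to prove the bound for $D=\sum_{n\le N}a_n n^{-s}$ and then pass to the limit along Dirichlet polynomials that approximate a general $D\in\mathcal{H}_1$. Next, I would write the coefficients of the $u$-translate as $a_n/n^u = (a_n/n^{u_1})\cdot n^{-u_2}$ and apply Theorem~\ref{HelsonII} to the Dirichlet series $D_{u_1}=\sum (a_n/n^{u_1}) n^{-s}\in\mathcal{H}_2$ with parameter $u_2>0$. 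This yields
\[
\Big(\int_{\Xi}\sup_x\Big|\sum_{n=1}^x \frac{a_n}{n^u}\chi(n)\Big|^2 d\chi\Big)^{1/2}
= \Big(\int_{\Xi}\sup_x\Big|\sum_{n=1}^x \frac{a_n/n^{u_1}}{n^{u_2}}\chi(n)\Big|^2 d\chi\Big)^{1/2}
\le C(u_2)\,\|D_{u_1}\|_2 .
\]
Finally, chaining this with the hypercontractivity bound $\|D_{u_1}\|_2\le E(u_1,1,2)\|D\|_1$ and using that the $L^1(\Xi)$-norm of the maximal function is dominated by its $L^2(\Xi)$-norm (Jensen / Cauchy--Schwarz, since $\Xi$ has total mass one) gives
\[
\int_{\Xi}\sup_x\Big|\sum_{n=1}^x \frac{a_n}{n^u}\chi(n)\Big|\,d\chi
\le \Big(\int_{\Xi}\sup_x\Big|\sum_{n=1}^x \frac{a_n}{n^u}\chi(n)\Big|^2 d\chi\Big)^{1/2}
\le C(u_2)\,E(u_1,1,2)\,\|D\|_1,
\]
so one may take $C(u)=C(u/2)\,E(u/2,1,2)$. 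The equivalent statement for $f\in H_1(\mathbb{T}^\infty)$ is then just the reformulation under the Bohr transform \eqref{Bohrtrafo}, using that $\|D\|_1=\|f\|_1$ and that the partial sums $\sum_{\mathfrak{p}^\alpha<x}\widehat{f}(\alpha)\mathfrak{p}^{-u\alpha}z^\alpha$ correspond under $\mathfrak{B}$ to $\sum_{n<x}(a_n/n^u)n^{-s}$ evaluated along characters.

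The step I expect to be the main (and essentially the only) obstacle is making the reduction to polynomials fully rigorous: one must check that for a general $D\in\mathcal{H}_1$ the maximal function $\chi\mapsto\sup_x|\sum_{n\le x}(a_n/n^u)\chi(n)|$ is measurable and that the polynomial estimate passes to the limit — that is, that if $D^{(k)}\to D$ in $\mathcal{H}_1$ then the maximal functions behave well. This is handled by the usual device: it is enough to bound $\sup_{x\le X}$ for each fixed $X$ (a finite supremum, hence unproblematic) uniformly in $X$, and the coefficients $a_n=\hat f(\alpha)$ depend continuously on $f\in H_1(\mathbb{T}^\infty)$, so the finite-sum maximal functions of $D^{(k)}$ converge pointwise (indeed uniformly in $\chi$) to those of $D$; letting $X\to\infty$ by monotone convergence then transfers the uniform bound. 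Everything else is a one-line application of already-established results.
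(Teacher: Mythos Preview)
Your argument is correct and follows the same route as the paper: bound the $L^1$-maximal function by the $L^2$-maximal function via Cauchy--Schwarz, apply Theorem~\ref{HelsonII} to the translated series, and close with the hypercontractivity estimate~\eqref{hypo}. Your splitting $u=u_1+u_2$ is in fact slightly more careful than the paper's own presentation, which writes $\|D_u\|_2$ directly; the split is what makes the application of Theorem~\ref{HelsonII} (which needs a \emph{strictly} positive translation parameter) literally correct. The polynomial approximation step you worry about is unnecessary: since hypercontractivity puts $D_{u_1}\in\mathcal{H}_2$ for every $D\in\mathcal{H}_1$, Theorem~\ref{HelsonII} applies to $D_{u_1}$ as stated, with no limiting procedure required.
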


      \begin{proof}
      Take $u >0$ and $D \in \mathcal{H}_1$.  Then we know from \eqref{hypo} that $D_{u} \in \mathcal{H}_2$
      and $\|D_{u}\|_2 \leq E(u) \|D_{u}\|_1$. Hence by
           Theorem \ref{HelsonII}
                  \begin{align*}
            \int_{\Xi} \sup_{x} \Big| \sum_{n=1}^x \frac{a_n}{n^u}\chi(n)\Big| d\chi
            &
           \leq
          \Big( \int_{\Xi} \sup_{x} \Big| \sum_{n=1}^x \frac{a_n}{n^u}\chi(n)\Big|^{2} d\chi \Big)^{\frac{1}{2}}
          \\&
           \le  C(u) \|D_{u}\|_{2} \leq C(u)E(u) \|D\|_{1}\,,
          \end{align*}
                    which is the inequality we aimed for. The second assertion again follows from Bohr's transform
          \eqref{Bohrtrafo}.
          \end{proof}

          To derive from the preceding  maximal inequalities  results on pointwise convergence is standard, and an argument is formalized in \cite[Lemma 3.6]{DefantSchoolmann3}.
Then  we together with Lemma \ref{hedesaks*} conclude the following improvement  of Theorem \ref{Helson} (taken from \cite[Corollary 2.3]{DefantSchoolmann3}).

    \begin{Theo} \label{Helsonaxis}
    Let  $f \in H_1(\mathbb{T}^\infty)$ and
    $D= \sum a_{n}n^{-s}\in \mathcal{H}_1$ its associated Dirichlet series under the Bohr transform. Then
    for all $u>0$
    \[
    f(z)=  \lim_{x \to \infty }\sum_{ \mathfrak{p}^\alpha < x }\frac{\hat{f}(\alpha)}{\mathfrak{p}^{u \alpha}} z^\alpha\
    \]
    almost everywhere  on $\mathbb{T}^\infty$. Equivalently, almost all vertical limits
    $D^\chi$ converges  on the right half-plane. More precisely, there is a null set $N\subset \Xi$ such that, if $\chi \notin N$, then we for all $u+it \in [\text{Re}>0]$  have
    $$D^{\chi}(u+it)=f_{\chi(\mathfrak{p})}*P_{u}(t).$$
     \end{Theo}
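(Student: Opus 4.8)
The plan is, for a fixed $u>0$, to extract the almost everywhere convergence of the partial sums on $\mathbb{T}^\infty$ from the maximal inequality of Theorem~\ref{MRK}, then to transport it through Lemma~\ref{hedesaks*} to the Dirichlet series $D^\chi$, and finally to identify the limit with a Poisson extension and remove the dependence of the exceptional set on $u$ and $t$. For the first step I would observe that, since $\widehat f$ is supported on $\mathbb{N}_0^{(\mathbb{N})}$, the function $f_u:=f*p_u$ satisfies $\widehat{f_u}(\alpha)=\widehat f(\alpha)\mathfrak{p}^{-u\alpha}$ and lies in $H_q(\mathbb{T}^\infty)$ for every $q<\infty$ by \eqref{hypo}, that for a Dirichlet polynomial $q$ the sums $\sum_{\mathfrak{p}^\alpha<x}\widehat q(\alpha)\mathfrak{p}^{-u\alpha}z^\alpha$ are eventually constant and equal to $q_u$, and that Theorem~\ref{MRK} makes the sublinear operator $g\mapsto\sup_x\big|\sum_{\mathfrak{p}^\alpha<x}\widehat g(\alpha)\mathfrak{p}^{-u\alpha}z^\alpha\big|$ bounded from $H_1(\mathbb{T}^\infty)$ into $L_1(\mathbb{T}^\infty)$. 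Feeding these three facts into the routine maximal-inequality argument (approximate $f$ by polynomials $q_k$ in $H_1$, split $f=q_k+(f-q_k)$, and pass to a subsequence along which both the maximal function of $f-q_k$ and the difference $(q_k)_u-f_u$ tend to $0$ almost everywhere), which is formalised in \cite[Lemma~3.6]{DefantSchoolmann3}, gives
\[
f_u(z)=\lim_{x\to\infty}\sum_{\mathfrak{p}^\alpha<x}\frac{\widehat f(\alpha)}{\mathfrak{p}^{u\alpha}}z^\alpha\qquad\text{for almost every }z\in\mathbb{T}^\infty ,
\]
which is the first displayed identity (it is \eqref{HelsonA}, now for every $f\in H_1(\mathbb{T}^\infty)$).

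Next I would invoke Lemma~\ref{hedesaks*} with the polynomials $f_n:=\sum_{\mathfrak{p}^\alpha<n}\widehat f(\alpha)\mathfrak{p}^{-u\alpha}z^\alpha$ and their associated Dirichlet polynomials $D_n=\sum_{k<n}\frac{a_k}{k^u}k^{-s}$. The identity just obtained is statement~(1) of that lemma with $f_u$ playing the role of the limit, so (3) and (4) hold as well. Since $D_n^\chi(it)=\sum_{k<n}a_k\chi(k)k^{-(u+it)}$ is the $n$-th partial sum of $D^\chi$ at $u+it$, statement~(4) reads: for almost every $\chi\in\Xi$ and almost every $t\in\mathbb{R}$ the series $D^\chi$ converges at $u+it$ and
\[
D^\chi(u+it)=f_u\Big(\frac{\chi(\mathfrak{p})}{\mathfrak{p}^{it}}\Big)=(f_u)_{\chi(\mathfrak{p})}(t),
\]
with (3) the special case $t=0$. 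As $p_u$ is the push-forward of the Poisson kernel $P_u$ under the Kronecker flow $\beta$, restriction along $\beta$ intertwines convolution by $p_u$ on $\mathbb{T}^\infty$ with convolution by $P_u$ on $\mathbb{R}$, so $(f_u)_z=f_z*P_u$ for almost every $z$ (see \cite{DefantSchoolmann}); hence $D^\chi(u+it)=(f_{\chi(\mathfrak{p})}*P_u)(t)$ for almost every $\chi$ and $t$.

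To conclude I would run this for every $u$ in the countable set $\mathbb{Q}_{>0}$ and let $N\subset\Xi$ be the union of the resulting null sets. For $\chi\notin N$ the series $D^\chi$ converges at every positive rational, hence -- since convergence of an ordinary Dirichlet series at a point forces convergence and holomorphy on the open half-plane strictly to its right -- $D^\chi$ converges and is holomorphic on all of $[\text{Re}>0]$; this is the ``Equivalently'' clause. The functions $s\mapsto D^\chi(s)$ and $u+it\mapsto(f_{\chi(\mathfrak{p})}*P_u)(t)$ are both harmonic on $[\text{Re}>0]$ and coincide for $u\in\mathbb{Q}_{>0}$ and almost every $t$; real-analyticity in $t$ promotes this to all $t$, and continuity together with the density of $\mathbb{Q}_{>0}\times\mathbb{R}$ then forces equality on all of $[\text{Re}>0]$, which is the asserted formula $D^{\chi}(u+it)=f_{\chi(\mathfrak{p})}*P_{u}(t)$.

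The step I expect to be the real obstacle is the one compressed above into ``$(f_u)_z=f_z*P_u$'' together with the harmonicity claim: one must check, uniformly for $\chi$ outside a \emph{single} null set, that the Poisson-type extension $(u,t)\mapsto(f_{\chi(\mathfrak{p})}*P_u)(t)$ is well defined (the convolution integral converges for every $u>0$ and every $t$) and harmonic on the whole half-plane, knowing a priori only that the fibre functions $f_{\chi(\mathfrak{p})}$ are locally integrable -- this is where the harmonic analysis on $\mathbb{T}^\infty$ relative to the Kronecker flow developed in \cite{DefantSchoolmann} has to do the real work; everything else is the standard passage from a maximal inequality to almost everywhere convergence, supplemented by Lemma~\ref{hedesaks*}.
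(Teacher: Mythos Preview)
Your proposal is correct and follows exactly the route the paper sketches: derive almost-everywhere convergence from the maximal inequality of Theorem~\ref{MRK} via the standard argument formalised in \cite[Lemma~3.6]{DefantSchoolmann3}, then transport to vertical limits through Lemma~\ref{hedesaks*}. The paper itself gives no further detail on obtaining the Poisson identity with a single null set independent of $u$ and $t$ (it simply cites \cite[Corollary~2.3]{DefantSchoolmann3}), so your countable-rationals-plus-harmonicity argument---together with your honest flagging of the one genuinely analytic point, the well-definedness and harmonicity of $(u,t)\mapsto f_{\chi(\mathfrak p)}*P_u(t)$ from mere local integrability of the fibre function---is a reasonable and correct way to complete what the survey leaves to the reference.
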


There is another interesting aspect of this theorem. A result of Bayart from
        \cite{Bayart}  (see e.g.  \cite[Theorem 12.11]{Defant} for an explicit formulation)
        states that the best $u>0$ such that
        $\sum_{ \alpha }
    \frac{|\widehat{f}(\alpha)|}{\mathfrak{p}^{ u \alpha}} < \infty $
                for all $f \in H_1(\mathbb{T}^\infty)$, equals $1/2$.
                In particular, for all $u \le 1/2$ there is
                $f \in H_1(\mathbb{T}^\infty)$ such that the equality '$f(z) = \sum_{\alpha}\frac{\widehat{f}(\alpha)}{\mathfrak{p}^{ u \alpha}}z^\alpha$ a.e.' fails.
                Note that in contrast to this, the limit in Theorem \ref{Helsonaxis} exists for all $u>0$.

    \section{Helson meets Riemann} \label{Riemann}
    Let us come back to Theorem \ref{riemann} which shows that Riemann's conjecture holds for allmost all vertical limits of the zeta series. The following result from \cite[Theorem 4.6]{HLS} (here even proved for Dirichlet series
    in $\mathcal{H}_1$) is an improvement.

    \begin{Theo} \label{RiemannH1}
 Let $D \in \mathcal{H}_1$ have  completely
  multiplicative coefficients $a_n$. Then almost all vertical limits $D^\chi$  converge on $[\text{Re} >0]$ to a zero free function.
  \end{Theo}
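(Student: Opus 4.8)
The plan is to show that for almost every $\chi$ the vertical limit $D^\chi$ agrees on the whole half-plane $[\re>0]$ with the exponential of a second convergent Dirichlet series, which makes zero-freeness automatic. Put $f=\mathfrak{B}^{-1}(D)\in H_1(\mathbb{T}^\infty)$. Since the $a_n$ are completely multiplicative, $\widehat{f}(\alpha)=\prod_j a_{p_j}^{\alpha_j}$ for $\alpha\in\mathbb{N}_0^{(\mathbb{N})}$, so $f$ is formally the product $\prod_j(1-a_{p_j}z_j)^{-1}$ of one-variable functions.

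The first step is to read off summability of $(a_p)$ from $f\in L_1$. Applying the conditional expectation $S_N$ onto the first $N$ coordinates (a contraction on $L_1$) gives $S_Nf=\prod_{j\le N}(1-a_{p_j}z_j)^{-1}$ in $L_1(\mathbb{T}^N)$; already the one-coordinate conditional expectations force $|a_{p_j}|<1$ for every $j$, since otherwise the Fourier coefficients $a_{p_j}^{k}$ would not tend to $0$. Then, as the $L_1$-norm of a product of functions in separate variables is the product of the $L_1$-norms, $\|f\|_1\ge\|S_Nf\|_1=\prod_{j\le N}\|(1-a_{p_j}z_j)^{-1}\|_{L_1(\mathbb{T})}$; a routine estimate gives $\|(1-az)^{-1}\|_{L_1(\mathbb{T})}\ge 1+c|a|^2$ for an absolute $c>0$ and all $|a|<1$, so letting $N\to\infty$ yields $\sum_j|a_{p_j}|^2<\infty$ (equivalently, $D\in\mathcal{H}_2$).

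Next I would form $L:=-\sum_j\log(1-a_{p_j}z_j)$. Its $j$-th summand has vanishing $0$-th Fourier coefficient and $\|\log(1-a_{p_j}z_j)\|_{L_2(\mathbb{T})}^2=\sum_{k\ge1}|a_{p_j}|^{2k}/k^2\le\frac{\pi^2}{6}|a_{p_j}|^2$; since summands depending on distinct coordinates with mean zero are orthogonal, $L\in H_2(\mathbb{T}^\infty)\subset H_1(\mathbb{T}^\infty)$. Applying Theorem~\ref{Helsonaxis} to both $D$ and $\mathfrak{B}(L)$ produces a single null set $N_0\subset\Xi$ such that for every $\chi\notin N_0$ both $D^\chi$ and $\mathfrak{B}(L)^\chi$ converge to holomorphic functions on all of $[\re>0]$. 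On the smaller half-plane $[\re>1/2]$ every member of $\mathcal{H}_1$ and each of its vertical limits converges absolutely, so there the Euler product is available and, using the principal logarithm (legitimate since $|a_p\chi(p)p^{-s}|<1$),
\[
D^\chi(s)=\prod_p\big(1-a_p\chi(p)p^{-s}\big)^{-1}=\exp\Big(-\sum_p\log\big(1-a_p\chi(p)p^{-s}\big)\Big)=\exp\big(\mathfrak{B}(L)^\chi(s)\big),\qquad \re s>1/2.
\]
Both $D^\chi$ and $\exp(\mathfrak{B}(L)^\chi)$ are holomorphic on the connected set $[\re>0]$ and coincide on the nonempty open subset $[\re>1/2]$, so by the identity theorem they agree on all of $[\re>0]$; since $\exp$ never vanishes, $D^\chi$ is zero free on $[\re>0]$ for every $\chi\notin N_0$.

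I expect the first step to be the main obstacle: one has to upgrade the mere membership of the completely multiplicative series $D$ in $\mathcal{H}_1$ to the quantitative bound $\sum_p|a_p|^2<\infty$, which is exactly what is needed for $L$ to land in a Hardy space. Once this is in place the rest is soft — Theorem~\ref{Helsonaxis} supplies convergence of the auxiliary series $\mathfrak{B}(L)$ on $[\re>0]$, the Euler product gives the identity $D^\chi=\exp(\mathfrak{B}(L)^\chi)$ on $[\re>1/2]$ for free, and analytic continuation does the rest.
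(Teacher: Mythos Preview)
Your argument is correct, but it follows a genuinely different path from the paper's. The paper first treats the case $D\in\mathcal{H}_2$ by pairing $D^\chi$ with its formal Dirichlet inverse $\sum a_n\mu(n)\chi(n)n^{-s}$ (built from the M\"obius function), observes that this inverse is again in $\mathcal{H}_2$, applies Helson's theorem to both, and concludes zero-freeness from $D^\chi\cdot(D^\chi)^{-1}=1$ (first on $[\re>1/2]$, then by continuation). The general $\mathcal{H}_1$ case is then reduced to $\mathcal{H}_2$ via hypercontractivity~\eqref{hypo}: each horizontal translate $D_{1/n}$ lies in $\mathcal{H}_2$, so one gets zero-freeness of $(D_{1/n})^\chi$ on $[\re>0]$ outside a null set $\Xi_n$, and taking the union of these null sets finishes the proof.

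Your route is different in both steps. Instead of the hypercontractivity reduction you prove directly that a completely multiplicative $D\in\mathcal{H}_1$ must already lie in $\mathcal{H}_2$, via the product structure of the conditional expectations and the lower bound $\|(1-az)^{-1}\|_{L_1(\mathbb{T})}\ge 1+c|a|^2$; this is a nice self-contained observation that the paper does not make explicit. And instead of the M\"obius inverse you build a holomorphic logarithm $\mathfrak{B}(L)\in\mathcal{H}_2$ and exhibit $D^\chi=\exp(\mathfrak{B}(L)^\chi)$ on $[\re>0]$. The paper's argument is shorter and stays closer to classical Dirichlet-series manipulations, while yours yields the additional structural information that such a $D$ is automatically in $\mathcal{H}_2$ and that $D^\chi$ admits a holomorphic logarithm on the whole half-plane for almost every $\chi$.
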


  \begin{proof}
  Assume first $D\in \mathcal{H}_2$, and recall the definition of the M\"obius function $\mu:\mathbb{N} \to \{1,-1,0\}$
  \[
  \mu(n)
  =
  \begin{cases} 1& \text{ if }  n=1 \\
  (-1)^k        & \text{ if }  n= \mathfrak{p}_1\ldots \mathfrak{p}_k\\
  0     & \text{ else }
    \end{cases}
    \]
    Then for every character $\chi$ a formal(!) caculation shows that
    \[
    \Big( \sum a_n\chi(n) n^{-s}\Big) \ast \Big( \sum a_n  \chi(n)\mu (n)n^{-s}\Big) =1\,,
    \]
    where $\ast$ stands for the Cauchy product. Almost all vertical linits of these two Dirichlet series in $\mathcal{H}_2$ converge on the right half-plane , hence they there are zero free.
    In a second step, let  $D \in \mathcal{H}_1$. Then, given $n \in \mathbb{N}$,  by hypercontractivity \eqref{hypo}
    the  horizontal translation $D^{1/n} \in \mathcal{H}_2$,
    and hence there is a null set $\Xi_n$ such that all $(D^{1/n})^\chi, \chi \in C\Xi_n $ are zero free  on the right half-plane .
    But then for $\chi \in C \big( \bigcup \Xi_n\big)$ all Dirichlet series $(D^{1/n})^\chi, n \in \mathbb{N}$
    are zero free  on the right half-plane  which implies that all $D^\chi$ are.
      \end{proof}

      If we apply this result to the translated zeta series from \eqref{zeta}, then the proof of Theorem \ref{riemann} is immediate.

    \section{Helson meets Carleson-Hunt} \label{HelCar}
    It is well-known that Theorem \ref{CarlesonHunt}  extends to the case $1 < p < \infty$, a result then usually called Carleson-Hunt theorem.

    \begin{Theo} \label{CarlesonHunt_p}
    Let $1 < p < \infty$.
    The Fourier series
   $
   f(z) = \sum_{n=0}^\infty \hat{f}(n) z^n
   $
   of every   $f \in H_p(\mathbb{T})$ converges almost everywhere on $\mathbb{T}$. Moreover, there is a (best)
   constant $\text{CH}_p >0$ such that
   \begin{equation*}
\Big( \int_{\mathbb{T}} \sup_{x} \Big| \sum_{n < x} \widehat{f}(n) z^n \Big|^{p} dz \Big)^{\frac{1}{p}} \le \text{CH}_p \|f\|_{p}\,.
\end{equation*}
\end{Theo}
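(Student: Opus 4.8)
The plan is to obtain the $L_p$-statement from a distributional refinement of Carleson's theorem, whose $L_2$-version -- together with its maximal inequality -- is Theorem~\ref{CarlesonHunt}. Write $S^{\ast}g(z)=\sup_{x}\big|\sum_{n<x}\widehat g(n)z^{n}\big|$ for the maximal partial-sum operator. Since the analytic polynomials are dense in $H_p(\mathbb T)$ and $S_xg=g$ eventually for such $g$, the asserted a.e.\ convergence follows from the maximal inequality by the standard argument already invoked above, and a smallest admissible constant $\text{CH}_p$ then exists -- it is the norm of $S^{\ast}\colon H_p(\mathbb T)\to L_p(\mathbb T)$. As $H_p(\mathbb T)\subseteq L_p(\mathbb T)$, it suffices to prove $\|S^{\ast}g\|_p\lesssim_p\|g\|_p$ for all $g\in L_p(\mathbb T)$ and all $1<p<\infty$.

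The crucial input is Hunt's \emph{exceptional-set estimate}: there are absolute constants $A,B>0$ such that for every measurable $E\subseteq\mathbb T$ and every $g$ with $|g|\le\mathbf 1_E$,
\[
\big|\{\,z\in\mathbb T\colon S^{\ast}g(z)>\lambda\,\}\big|\ \le\ A\,|E|\,e^{-B\lambda}\qquad\text{for }\lambda\ge 1.
\]
For $0<\lambda\le 1$ one complements this with the elementary bound $|\{S^{\ast}g>\lambda\}|\le\min\!\big(1,\,C|E|/\lambda\big)$, immediate from the decay $|D_n(t)|\lesssim|t|^{-1}$ of the Dirichlet kernel off the support of $g$. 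Granting these, evaluating $\int_0^{\infty}p\lambda^{p-1}\,|\{S^{\ast}g>\lambda\}|\,d\lambda$ over the ranges $\lambda\le 1$ and $\lambda\ge 1$ yields $\|S^{\ast}\mathbf 1_E\|_p\lesssim_p|E|^{1/p}$ for every $1<p<\infty$; that is, $S^{\ast}$ is of restricted strong type $(p,p)$ throughout the open interval. Since $S^{\ast}$ is sublinear, the Stein--Weiss/Calder\'{o}n interpolation theorem for restricted types then promotes restricted strong type at two exponents $p_0<p<p_1$ to genuine strong type $(p,p)$, which is the maximal inequality sought. (Integrating the same exponential estimate against $\int|g|\log^{+}|g|$ additionally gives Hunt's $L\log L$ endpoint near $p=1$, not needed here; and the $L_2$ maximal inequality of Theorem~\ref{CarlesonHunt} is the $p=2$ model case of this scheme.)

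The real work -- and the main obstacle -- is the exceptional-set estimate, which is no soft consequence of the $L_2$ bound: proving it amounts to re-running Carleson's time-frequency (``tile'') analysis while keeping quantitative control of the set one is forced to discard, namely a neighbourhood of $E$ together with a neighbourhood of the relevant resonant frequencies, of total measure $\lesssim|E|e^{-B\lambda}$, off which $S^{\ast}g\le C\lambda$. One customarily transfers the problem first from $S^{\ast}$ on $\mathbb T$ to the maximally modulated Hilbert transform (the Carleson operator) on $\mathbb R$, which dominates $S^{\ast}$ pointwise up to the Hardy--Littlewood maximal function; this is precisely where Carleson's breakthrough and Hunt's refinement live, so in a survey one simply refers to their original papers (or a textbook account) rather than reproducing the argument.
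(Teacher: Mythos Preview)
The paper does not prove this theorem at all: Theorem~\ref{CarlesonHunt_p} is stated as the classical Carleson--Hunt theorem and used as a black box (it is the deep external input on which Theorem~\ref{one} rests, via the Hedenmalm--Saksman/Fefferman device). So there is nothing to compare your argument against.

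That said, your sketch is a faithful outline of Hunt's original approach: the exponential exceptional-set estimate for characteristic functions, restricted strong type $(p,p)$ for all $1<p<\infty$, and then interpolation to strong type. You are also right that the hard core of the argument --- the tile decomposition yielding the distributional bound --- cannot be bypassed and is exactly what one cites rather than reproduces; your closing remark is precisely how the present survey handles it. One small caution: the ``elementary bound'' $|\{S^{\ast}g>\lambda\}|\le C|E|/\lambda$ for $|g|\le\mathbf 1_E$ is not quite ``immediate from $|D_n(t)|\lesssim|t|^{-1}$'' (that decay gives pointwise control off $E$, not a global weak-type bound uniform in $n$); the clean way to get weak $(1,1)$-type information at small $\lambda$ is again via the exceptional-set machinery, or simply to observe that restricted strong type at two exponents above $1$ already suffices for the interpolation and no separate small-$\lambda$ estimate is needed.
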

    And all this fails for $p=1$.
    A reasonable question would be to ask for almost everywhere  pointwise convergence on $\mathbb{T}^\infty$ of the Fourier
    series $\sum_\alpha \hat{f}(\alpha)z^\alpha$ whenever $f \in H_p(\mathbb{T}^\infty)$. But how do we order the multi indices? If we consider absolute convergence of $\sum_\alpha \hat{f}(\alpha)z^\alpha$, then we necessarily need to assume  $(\hat{f}(\alpha)) \in \ell_1$.

     For $p=2$ the following result  was proved by
    Hedenmalm-Saksman in \cite[Theorem 1.5]{HedenmalmSaksman}, and later it was  extended
     by Duy in \cite{Duy}  to the scale of $1 < p <\infty$ and arbitrary general Dirichlet series
     (see also  \cite{DefantSchoolmann2}) .

  \begin{Theo} \label{one}
  Let  $1<p<\infty$. Then
    for every $f \in H_{p}(\mathbb{T}^\infty)$ we have
\begin{equation*}
\Big( \int_{\mathbb{T}^\infty} \sup_{x} \Big| \sum_{\mathfrak{p}^\alpha < x} \widehat{f}(\alpha) z^\alpha \Big|^{p} dz \Big)^{\frac{1}{p}} \le \text{CH}_p \|f\|_{p}.
\end{equation*}
  Equivalently,
  for every $D = \sum a_n n^{-s} \in \mathcal{H}_p$
\begin{equation*}
\Big( \int_{\Xi} \sup_{x} \Big| \sum_{n=1}^x a_n \chi(n)\Big|^{p} d\chi \Big)^{\frac{1}{p}} \le \text{CH}_p \|D\|_{p}.
\end{equation*}
  \end{Theo}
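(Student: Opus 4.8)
The plan is to transfer the one-dimensional Carleson--Hunt maximal inequality (Theorem \ref{CarlesonHunt_p}) to the infinite torus by combining it with the Kronecker flow and the structure of $\widehat{\mathbb{T}^\infty}$ described in Subsection \ref{flow}. The crucial observation is that the ordering of the frequencies $\mathfrak{p}^\alpha$ underlying the partial sums $\sum_{\mathfrak{p}^\alpha < x} \widehat{f}(\alpha) z^\alpha$ is exactly the order induced on $\widehat{\mathbb{T}^\infty}$ by $\mathbb{R}$ via the identification $\alpha \leftrightarrow \log \mathfrak{p}^\alpha$. Thus, for almost every $z$, the restriction $f_z(t) = f(\mathfrak{p}^{-it}z)$ is a function on $\mathbb{R}$ whose ``Fourier expansion'' has frequencies $\log\mathfrak{p}^\alpha$, and the partial sums $\sum_{\log \mathfrak{p}^\alpha < \log x} \widehat{f}(\alpha) z^\alpha$ correspond to the restriction of the Fourier integral of $f_z$ to the interval $(-\infty, \log x)$. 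The first step is therefore to reduce to the case where $f$ is a Dirichlet polynomial (a trigonometric polynomial on $\mathbb{T}^\infty$), so that all sums are finite and $f_z$ is genuinely defined for every $z$; the general case then follows by a standard density and Fatou argument once the inequality is established for polynomials with the constant $\mathrm{CH}_p$ independent of the polynomial.

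Next I would invoke the continuous analogue of the Carleson--Hunt theorem: the maximal operator $g \mapsto \sup_{R} \big| \int_{-R}^{R} \widehat{g}(\xi) e^{i\xi t}\, d\xi \big|$ (truncated Fourier inversion) is bounded on $L_p(\mathbb{R})$ for $1 < p < \infty$, with the same constant $\mathrm{CH}_p$ as in the periodic case — indeed the real-line result is equivalent to the circle result by a standard transference/dilation argument, and since we are dealing with polynomials (finitely many frequencies, all of the form $\log\mathfrak{p}^\alpha$) one may even work with a one-sided truncation because $f$ has spectrum in $\mathbb{N}_0^{(\mathbb{N})}$, i.e. in $[\log\mathfrak{p}^\alpha \ge 0]$. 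Applying this inequality to $g = f_z$ for a fixed generic $z \in \mathbb{T}^\infty$, one obtains
\[
\int_{\mathbb{R}} \sup_{x} \Big| \sum_{\mathfrak{p}^\alpha < x} \widehat{f}(\alpha) (\mathfrak{p}^{-it}z)^\alpha \Big|^p \, d\mu(t) \le \mathrm{CH}_p^p \int_{\mathbb{R}} |f_z(t)|^p\, d\mu(t)
\]
on every bounded interval, and then one integrates this over $z \in \mathbb{T}^\infty$ against Haar measure. By the defining property of the Kronecker flow and rotation invariance of Haar measure, the right-hand side integrates to $\mathrm{CH}_p^p \|f\|_p^p$, while the left-hand side — after using that $(\mathfrak{p}^{-it}z)^\alpha = \mathfrak{p}^{-it\alpha} z^\alpha$ and that the supremum inside is invariant under the flow in the appropriate sense — integrates to $\int_{\mathbb{T}^\infty}\sup_x |\sum_{\mathfrak{p}^\alpha < x}\widehat{f}(\alpha) z^\alpha|^p\, dz$. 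Finally, the equivalent reformulation for $D \in \mathcal{H}_p$ follows immediately from Bohr's transform \eqref{Bohrtrafo} together with the identification $D^\chi(0) = \sum a_n \chi(n)$ and the fact that taking vertical limits corresponds to translation on $\mathbb{T}^\infty \cong \Xi$ which preserves the $\mathcal{H}_p$-norm.

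The main obstacle is making the transference from $\mathbb{T}$ (or $\mathbb{R}$) to $\mathbb{T}^\infty$ completely rigorous for the \emph{maximal} operator, not just for individual partial sums: one must be careful that the supremum over $x$ is measurable, that Fubini is applicable (this is why reducing to polynomials first is convenient), and that the one-dimensional inequality is applied along almost every orbit of the flow with a uniform constant. A subtle point is the precise correspondence between the real-line truncation parameter $R$ and the Dirichlet cutoff $x$: since the relevant frequencies $\log\mathfrak{p}^\alpha$ are discrete and nonnegative, the maximal function over Dirichlet cutoffs $x$ is dominated by the continuous one-sided Carleson maximal function of $f_z$, and one should check that passing between one-sided and symmetric truncations (using that the Hilbert transform is also $L_p$-bounded) does not inflate the constant beyond $\mathrm{CH}_p$ — or, if it does, that one redefines $\mathrm{CH}_p$ accordingly so that the stated ``best constant'' claim is the one coming from the one-sided periodic Carleson--Hunt inequality. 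Once these measure-theoretic and constant-tracking issues are handled, the argument is a clean application of the flow together with Theorem \ref{CarlesonHunt_p}.
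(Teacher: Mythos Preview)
Your overall strategy---reduce to polynomials, pull back along the Kronecker flow to a one-variable exponential sum, apply the one-dimensional Carleson--Hunt inequality, and then integrate over $\mathbb{T}^\infty$---is exactly the Hedenmalm--Saksman route the paper cites.

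There is, however, a real gap at the step where you write ``Applying this inequality to $g=f_z$''. For a polynomial $f$ the slice
\[
f_z(t)=\sum_{\mathfrak{p}^\alpha\le N}\widehat{f}(\alpha)\,z^\alpha\,e^{-it\log\mathfrak{p}^\alpha}
\]
is an almost periodic trigonometric polynomial; it is bounded but \emph{not} in $L_p(\mathbb{R})$, and its Fourier transform is a finite sum of point masses, so the expression $\int_{-R}^{R}\widehat{g}(\xi)e^{i\xi t}\,d\xi$ is not literally meaningful and the $L_p(\mathbb{R})$ Carleson--Hunt theorem cannot be applied to it as stated. Your hedge ``on every bounded interval'' does not rescue this: the Carleson--Hunt maximal inequality is global and does not localise to intervals with the same constant, and multiplying $f_z$ by a cutoff destroys the exact correspondence between frequency truncation at $\log x$ and the Dirichlet partial sums $S_x f$.

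What closes this gap is precisely what the paper calls the ``magic trick invented by Fefferman''. For a finite exponential sum with arbitrary real frequencies $\lambda_1<\cdots<\lambda_N$ one dilates by a large parameter $R$ so that the rescaled frequencies $R\lambda_j$ are close to distinct integers, replaces them by those integers to obtain a genuine trigonometric polynomial on $\mathbb{T}$, applies the periodic Carleson--Hunt inequality with constant $\mathrm{CH}_p$, and lets $R\to\infty$. In the limit the $L_p(\mathbb{T})$-norms converge to the Besicovitch seminorms $\lim_{T\to\infty}\frac{1}{2T}\int_{-T}^{T}|\cdot|^p\,dt$, which for $f_z$ and for $\sup_x|S_xf(\cdot\,\beta(t))|$ coincide (by the density of the flow) with the $L_p(\mathbb{T}^\infty)$-norms of $f$ and of $\sup_x|S_xf|$. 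Once you insert this transference step in place of the direct appeal to $L_p(\mathbb{R})$, your argument is complete and agrees with the paper's.
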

  Clearly, here the particular case $p=2$  proves  Theorem \ref{HelsonII} for $u=0$ which means  a strong improvement.
In fact the proof from \cite{Duy} (and also \cite{DefantSchoolmann2}) follows the strategy invented by  Hedenmalm-Saksman
in \cite{HedenmalmSaksman}; this strategy applies the orginal Carleson-Hunt theorem for functions in one variable through a magic trick invented by Fefferman in \cite{Fefferman}. Observe, that if the function $f$ in the preceding theorem  only depends on the first variable, then we recover the full one variable case from Theorem \ref{CarlesonHunt_p}.  But note that
Theorem \ref{one} does not cover Theorem \ref{MRK}, since for $p=1$ it definitely fails.

Again we may deduce convergence theorems. Combining Theorem \ref{one} and adding Lemma \ref{hedesaks*},   we conclude the following
result from \cite{DefantSchoolmann2} (see again \cite[Theorem 1.4]{HedenmalmSaksman} for the case $p=2$).

    \begin{Theo} \label{Helsonaxis2}
    Let  $f \in H_p(\mathbb{T}^\infty), \, 1 < p < \infty$ and
    $D= \sum a_{n}n^{-s}\in \mathcal{H}_p$ its associated Dirichlet series under the Bohr transform. Then
    \[
    f(z)=  \lim_{x \to \infty }\sum_{ \mathfrak{p}^\alpha < x }\hat{f}(\alpha) z^\alpha\
    \]
    almost everywhere  on $\mathbb{T}^\infty$. In particular, for almost all $\chi \in \Xi$
    \begin{itemize}
    \item[(1)]
    $D^\chi$ converges in $s=0$ to $f(\chi(p))$,
     \item[(2)]
     $D^\chi$ converges almost everywhere on the imaginary axis, and for almost all $t \in \R$ we have $D^\chi(it) = f_{\chi(p)}(t)$,
     \item[(3)]
      $D^\chi$ converges everywhere on the right half-plane, and we for all $u>0$ almost everywhere on $\R$ have $D^{\chi}(u+it)=f_{\chi(p)}*P_{u}(t)$.
      \end{itemize}
         \end{Theo}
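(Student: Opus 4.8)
The plan is to obtain the a.e.\ convergence on $\mathbb{T}^\infty$ directly from the Carleson--Hunt maximal inequality of Theorem~\ref{one}, and then to read off (1)--(3) by transporting this convergence to $\Xi$ and to $\mathbb{R}$ via the Bohr transform and Lemma~\ref{hedesaks*}.

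\emph{Step 1 (a.e.\ convergence on $\mathbb{T}^\infty$).} For $f\in H_p(\mathbb{T}^\infty)$ write $S_x f(z)=\sum_{\mathfrak{p}^\alpha<x}\widehat f(\alpha)z^\alpha$. Since $x\mapsto S_xf$ is constant on the intervals between consecutive values of $\mathfrak{p}^\alpha$, the maximal function $Mf:=\sup_x|S_xf|$ is measurable and, by Theorem~\ref{one}, $\|Mf\|_{L_p(\mathbb{T}^\infty)}\le\text{CH}_p\|f\|_p$. On the subspace of polynomials, which is dense in $H_p(\mathbb{T}^\infty)$ for $1\le p<\infty$, one trivially has $S_xP\to P$ everywhere. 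The routine maximal-function argument --- spelled out as \cite[Lemma~3.6]{DefantSchoolmann3} --- then upgrades this to $\lim_{x\to\infty}S_xf(z)=f(z)$ for almost every $z\in\mathbb{T}^\infty$, for every $f\in H_p(\mathbb{T}^\infty)$. This is already the first displayed assertion of the theorem.

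\emph{Step 2 (statements (1) and (2)).} Under the isomorphism $\iota\colon\Xi\to\mathbb{T}^\infty$ and the Bohr transform one has $S_xf(\chi(\mathfrak{p}))=\sum_{n<x}a_n\chi(n)$, and since $dz$ pushes forward to the Haar measure on $\Xi$, Step~1 gives (1) at once. For (2), I would apply Step~1 to the polynomials $f_n:=S_nf$, whose associated Dirichlet polynomials are $D_n=\sum_{k<n}a_k k^{-s}$; this is condition~(1) of Lemma~\ref{hedesaks*}, so its conditions~(3) and~(4) hold as well, that is $\lim_nD_n^\chi(0)=f(\chi(\mathfrak{p}))$ for a.e.\ $\chi$ and $\lim_nD_n^\chi(it)=f\big(\mathfrak{p}^{-it}\chi(\mathfrak{p})\big)=f_{\chi(\mathfrak{p})}(t)$ for a.e.\ $\chi$ and a.e.\ $t$. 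As the partial sums of a Dirichlet series are step functions in the summation parameter, convergence along $\mathbb{N}$ coincides with convergence of the series, which is precisely (2). (Alternatively one can argue (2) by hand: for fixed $t$ one has $S_xf(\beta(t)z)=\sum_{n<x}a_n\chi(n)n^{-it}$ with $z=\chi(\mathfrak{p})$, translation by $\beta(t)$ preserves the Haar measure, and a Fubini argument over $\mathbb{T}^\infty\times\mathbb{R}$ then yields the claim, with the limit identified through \eqref{flowflow}.)

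\emph{Step 3 (statement (3)).} Since (1) shows that $D^\chi$ converges at $s=0$ for a.e.\ $\chi$, the classical fact that convergence of a Dirichlet series at a point forces convergence on the open half-plane to its right (see e.g.\ \cite{Defant}) shows that $D^\chi$ converges everywhere on $[\text{Re}>0]$ for a.e.\ $\chi$. For the Poisson-integral formula I would simply invoke Theorem~\ref{Helsonaxis}, which applies since $H_p(\mathbb{T}^\infty)\subset H_1(\mathbb{T}^\infty)$, and which provides a null set $N\subset\Xi$ off which $D^\chi(u+it)=f_{\chi(\mathfrak{p})}*P_u(t)$ for all $u+it\in[\text{Re}>0]$; intersecting $N$ with the full-measure sets from Steps~1--2 gives (3). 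I expect no genuinely hard step here: the deep analytic input is entirely carried by Theorem~\ref{one} (the Carleson--Hunt theorem on $\mathbb{T}^\infty$), which we are assuming; what needs care is only the bookkeeping --- density of polynomials in $\mathcal{H}_p$ together with stabilization of $S_x$ on them so that the maximal argument runs for every $1<p<\infty$, and the correct matching of the three limit objects $f(\chi(\mathfrak{p}))$, the boundary restriction $f_{\chi(\mathfrak{p})}$, and its Poisson integral $f_{\chi(\mathfrak{p})}*P_u$ by tracking the a.e.-defined restriction $f_z$ of \eqref{flowflow} through the Kronecker flow and Lemma~\ref{hedesaks*}.
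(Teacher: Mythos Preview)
Your proposal is correct and follows exactly the route the paper indicates: the paper does not give a detailed proof but simply says the result is obtained by ``combining Theorem~\ref{one} and adding Lemma~\ref{hedesaks*}'', together with the standard maximal-function argument formalized in \cite[Lemma~3.6]{DefantSchoolmann3}. Your Step~3 invocation of Theorem~\ref{Helsonaxis} for the Poisson representation is a clean way to handle (3), consistent with the paper's development; the only cosmetic slip is the phrase ``intersecting $N$ with the full-measure sets'' where you mean taking the union of the null sets.
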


     This is a considerably  strong  extension of  Theorem \ref{Helson}. Since the Carleson-Hunt Theorem \ref{CarlesonHunt_p}
     on pointwise convergence does not hold for $p=1$, for this case the first two  conclusions  are false
     (to see this for the  second one use Lemma \ref{hedesaks*}) whereas the last one  holds by Theorem \ref{Helsonaxis}.

\section{Helson meets Bohr} \label{H_B}
Are Helson's Theorem \ref{Helson} and Bohr's Theorem \ref{Bohr} linked? Can Helson-like arguments be used
to prove the important Banach space identity $\mathcal{H}_\infty = \mathcal{D}_\infty$ from \eqref{bruce}?
Both questions have a positive answer,
and  this will be a consequence of the following  maximal inequality from \cite{DefantSchoolmann2} which
is a variant of Theorem \ref{one}.

  \begin{Theo} \label{twotwo} For every $u >0$ there is a constant $C>0$ such that for for every $1 \leq p <\infty$ there
  and  every $f \in H_{p}(\mathbb{T}^\infty)$ we have
\begin{equation*}
\Big( \int_{\mathbb{T}^\infty} \sup_{x} \Big| \sum_{\mathfrak{p}^\alpha \leq x}
\frac{\widehat{f}(\alpha)}{\mathfrak{p}^{ u \alpha}} z^\alpha \Big|^{p} dz \Big)^{\frac{1}{p}} \le C\|f\|_{p}.
\end{equation*}
    Equivalently,
  for
   every $D = \sum a_n n^{-s} \in \mathcal{H}_p$
\begin{equation}  \label{twoA}
\Big( \int_{\Xi} \sup_{x} \Big| \sum_{n=1}^x \frac{a_n}{n^u}\chi(n)\Big|^{p} d\chi \Big)^{\frac{1}{p}} \le C\|D\|_{p}.
\end{equation}
\end{Theo}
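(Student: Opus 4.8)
The plan is to reduce, via the Bohr transform \eqref{Bohrtrafo} and the isometry $\|D\|_p=\|f\|_p$, to the Dirichlet-series form \eqref{twoA}, and then to pin down where the damping $\mathfrak{p}^{-u\alpha}=n^{-u}$ enters. What makes Theorem \ref{twotwo} genuinely different from Theorem \ref{one} is that the constant must not depend on $p$; hence one cannot route the argument through the Carleson--Hunt maximal inequality, whose constant $\mathrm{CH}_p$ degenerates as $p\to 1$ (and is of no use for the intended application $p\to\infty$ in Section \ref{H_B}). The damping must therefore be exploited head-on, and the mechanism by which it helps is, in essence, that it turns an otherwise only logarithmically bounded ``Dirichlet kernel'' into a kernel that is pointwise dominated by a single integrable function.

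This is transparent in one variable. Suppose $D$ is supported on the powers of one prime, say $a_n\neq 0$ only for $n=2^{j}$; equivalently $f$ depends only on the first variable and corresponds to $G=\sum_{j\ge 0}\widehat{G}(j)w^{j}\in H_p(\mathbb{T})$. Putting $w=\chi(2)$ and $r=2^{-u}\in(0,1)$, complete multiplicativity gives, with $N=\lfloor\log_2 x\rfloor$,
\[
\sum_{n\le x}\frac{a_n\chi(n)}{n^u}=\sum_{0\le j\le N}\widehat{G}(j)\,(rw)^{j}=(G*E^{(r)}_N)(w),\qquad E^{(r)}_N(w)=\frac{1-(rw)^{N+1}}{1-rw}.
\]
Since $|1-re^{i\theta}|\gtrsim_u(1-r)+|\theta|$, one gets $\sup_{N}|E^{(r)}_N(e^{i\theta})|\lesssim_u\big((1-r)+|\theta|\big)^{-1}=:\Phi_r(\theta)$, and $\Phi_r\in L_1(\mathbb{T})$ with $\|\Phi_r\|_1=:C(u)<\infty$ (of size $\log\frac{1}{1-2^{-u}}$ as $u\to 0^{+}$). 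Hence $\sup_N|G*E^{(r)}_N|\le |G|*\Phi_r$ pointwise, so
\[
\Big\|\sup_{x}\big|\textstyle\sum_{n\le x}\tfrac{a_n\chi(n)}{n^u}\big|\Big\|_{L_p(\mathbb{T})}\le\|\Phi_r\|_1\,\|G\|_{L_p(\mathbb{T})}=C(u)\,\|G\|_{L_p(\mathbb{T})}\qquad(1\le p\le\infty),
\]
a $p$-free bound. Note the geometric weight $r^{j}=2^{-uj}$, which is exactly how the damping $n^{-u}$ reads off in this one-variable slice; this is the analogue of Theorem \ref{one} for thin series, but with a constant depending on $u$ alone.

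To reach arbitrary $D\in\mathcal{H}_p$, where the sum runs over all $n\le x$ and hence mixes the primes, I would run the transference scheme behind Theorem \ref{one} --- the strategy of Hedenmalm--Saksman \cite{HedenmalmSaksman} together with Fefferman's trick \cite{Fefferman}, see also \cite{DefantSchoolmann2} --- but feeding into it the elementary damped estimate above in place of the one-variable Carleson--Hunt theorem. Concretely, I would work on the finite-dimensional tori $\mathbb{T}^{N}$ (where, crucially, the damping restores $L_1$-integrability of the $\mathfrak{p}^{\alpha}$-ordered Dirichlet kernel, something that fails on the non-compact real line) and establish that the corresponding damped spherical Dirichlet kernel on $\mathbb{T}^{N}$ is dominated by a fixed integrable function whose $L_1$-norm is bounded by a constant depending only on $u$ --- independently of $N$ and of $p$; letting $N\to\infty$ then yields \eqref{twoA} for $1<p<\infty$ with a $p$-free constant. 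The endpoint $p=1$, where such machinery degenerates (it uses Riesz-projection-type bounds), is already available: \eqref{twoA} at $p=1$ is precisely Theorem \ref{MRK}, obtained from hypercontractivity \eqref{hypo} and Kantorovitch's maximal inequality. Taking $C(u)$ to be the maximum of the two constants completes the proof, and the ``equivalently'' statement is just the rewriting through \eqref{Bohrtrafo}.

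The main obstacle is exactly this globalization: carrying out the kernel-domination argument in the many-variable, $\mathfrak{p}^{\alpha}$-ordered setting, with the dominating function's $L_1$-norm controlled independently of the dimension $N$ and of $p$. The one-variable computation is essentially free, but the ordering by all of $\mathbb{N}$ genuinely mixes primes, and one has to check that Fefferman's reduction is compatible with the damping and does not reintroduce $p$-dependence through the auxiliary projections it employs --- which is also why it cannot simply be run at $p=1$. Everything else --- the Bohr-transform bookkeeping, the one-variable kernel estimate, and matching the $p=1$ case from Theorem \ref{MRK} into a single constant $C(u)$ --- is routine.
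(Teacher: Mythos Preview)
The survey itself does \emph{not} prove Theorem~\ref{twotwo}; it merely quotes the result from \cite{DefantSchoolmann2}. So there is no ``paper's own proof'' to compare against here, and I can only assess your sketch on its merits.

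Your one-variable computation (a single prime, i.e.\ $H_p(\T)$) is correct and illustrates the mechanism well: the damped partial-sum kernels $E^{(r)}_N$ are dominated pointwise by $\Phi_r\in L_1(\T)$, and convolution with a fixed $L_1$ function gives a $p$-free bound. Likewise, invoking Theorem~\ref{MRK} to cover $p=1$ is fine.

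The genuine gap is precisely the step you flag as ``the main obstacle'', and it is more serious than a bookkeeping issue. Two concrete problems:
\begin{itemize}
\item[(i)] The Hedenmalm--Saksman/Fefferman transference used for Theorem~\ref{one} does not land you on $\T$ but on $\R$, via the Kronecker flow \eqref{flow}: the partial sums $\sum_{\mathfrak p^\alpha\le x}$ correspond to truncating the (continuous) frequency variable at $L=\log x$. The relevant one-dimensional kernel is therefore the inverse Fourier transform of $\xi\mapsto \mathbf 1_{[0,L]}(\xi)e^{-u\xi}$ on $\R$, namely $(e^{(it-u)L}-1)/(it-u)$, which satisfies only $|K_L(t)|\le 2/\sqrt{t^2+u^2}$. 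This is \emph{not} in $L_1(\R)$, so your $\T$-computation does not survive the transference; the compactness of $\T$ was doing real work.
\item[(ii)] Working instead directly on $\T^N$, the $\mathfrak p^\alpha$-ordered damped Dirichlet kernel $K_{x,u}^{(N)}(z)=\sum_{\mathfrak p^\alpha\le x}\mathfrak p^{-u\alpha}z^\alpha$ does not factor over the coordinates (the constraint $\mathfrak p^\alpha\le x$ genuinely mixes them). The obvious candidate dominating function $\prod_{k\le N}|1-p_k^{-u}z_k|^{-1}$ has $L_1(\T^N)$-norm $\prod_{k\le N}c_k$ with $c_k-1\asymp p_k^{-2u}$, hence the product is uniformly bounded in $N$ only for $u>1/2$. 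For $0<u\le 1/2$ this route breaks down, yet Theorem~\ref{twotwo} is claimed for all $u>0$.
\end{itemize}
So the plan ``run Fefferman's reduction but feed in the damped $L_1$-kernel instead of Carleson--Hunt'' does not go through as stated; the reduction produces a non-compact model where the damping no longer yields an $L_1$ majorant, and the finite-torus alternative loses uniformity in the dimension for small $u$. Whatever argument \cite{DefantSchoolmann2} uses must exploit the damping differently --- presumably closer in spirit to the Perron-type representation and flow techniques the survey displays in Section~\ref{HaLi} --- and that is the missing idea in your proposal.
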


What  are  the differences of Theorem \ref{twotwo} and   Theorem \ref{one}\,? There are two. Admitting
translations along some $u>0$,  Theorem \ref{twotwo}
 holds for $p=1$, whereas Theorem \ref{one} does not. So in this case the preceding result recovers
 Theorem \ref{MRK}.
  But secondly, and more important, the constant in the maximal inequality from Theorem \ref{twotwo} does not depend on $p$.
We remark that another application of  Lemma \ref{hedesaks*}  again allows
us, now with a different argument, to  recover Theorem \ref{Helsonaxis}.

Let us indicate what is happening  whenever  in Theorem \ref{twotwo} the parameter  $p$ tends to $\infty$.
Obviously, we get that there is a constant $C >0$ such that for each $u> 0$ and $D = \sum a_n n^{-s} \in \mathcal{H}_\infty$
\begin{equation} \label{remain}
\Big\| \sup_{x} \Big| \sum_{n=1}^x \frac{a_n}{n^u}\chi(n) \Big| \Big\|_{L_\infty(\Xi)} \leq C \|D\|_{\mathcal{H}_{\infty}}\,.
\end{equation}
The following corollary of this inequality recovers part of
Theorem \ref{Bohr} and the identity  from  \eqref{bruce}. It shows up to which
amount the preceding Helson type Theorem \ref{twotwo} still reflects Bohr's original ideas.

\begin{Coro} \label{kreis}
 For each  $D= \sum a_{n}n^{-s} \in \mathcal{H}_\infty$ we have
  \[
  D \in \mathcal{D}_\infty \,\,\, \text{ and  } \|D\|_{\mathcal{D}_\infty} \leq  \|D\|_{\mathcal{H}_\infty}\,,
  \]
  and moreover $D$ for each $u >0$ converges uniformly on
   $[\text{Re}> u]$.
   \end{Coro}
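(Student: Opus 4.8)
The plan is to re-read the maximal estimate \eqref{remain} as a statement about the partial sums of $D$ along vertical lines, then invoke the classical theory of Dirichlet series to pass from uniform boundedness to uniform convergence, and finally use Theorem \ref{Helsonaxis} to control the sup-norm with constant one. Write $D=\sum a_nn^{-s}$ with associated $f\in H_\infty(\mathbb{T}^\infty)$, so that $\|f\|_\infty=\|D\|_{\mathcal{H}_\infty}$. For fixed $u>0$ the function $\chi\mapsto\sup_x\big|\sum_{n\le x}\frac{a_n}{n^u}\chi(n)\big|$ is a supremum of continuous functions on the compact group $\Xi$, hence lower semicontinuous, and since the Haar measure of $\Xi$ has full support its $L_\infty(\Xi)$-norm equals its actual supremum over $\Xi$. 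Thus \eqref{remain} says
\[
\sup_{\chi\in\Xi}\ \sup_x\Big|\sum_{n\le x}\frac{a_n}{n^u}\chi(n)\Big|\ \le\ C\,\|D\|_{\mathcal{H}_\infty}\,,\qquad u>0\,,
\]
and evaluating at the points $\chi=\beta(\tau)$ of the Kronecker flow, where $\chi(n)=n^{-i\tau}$, this is exactly the assertion that the partial sums of $D$ are uniformly bounded on every line $[\text{Re}=u]$, $u>0$.

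Next I would invoke Bohr's classical formula for the abscissa of uniform convergence (see e.g. \cite{HardyRiesz} or \cite{Defant}), which expresses $\sigma_u(D)$ as the limit superior of $\log\big(\sup_{\tau\in\mathbb{R}}\big|\sum_{n\le x}a_nn^{-i\tau}\big|\big)/\log x$. Applied to the translate $D_u$, whose partial sums on $[\text{Re}=0]$ are the partial sums of $D$ on $[\text{Re}=u]$, the bound just obtained forces $\sigma_u(D_u)\le0$. Hence $D_u$, and therefore $D$, converges uniformly on $[\text{Re}>u+\varepsilon]$ for every $\varepsilon>0$; since $u>0$ was arbitrary, $D$ converges uniformly on $[\text{Re}>\delta]$ for each $\delta>0$, which is the second assertion of the corollary. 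In particular $D$ converges throughout $[\text{Re}>0]$.

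For the norm bound, fix $\sigma>0$. By the previous step the partial sums $\sum_{n\le x}a_nn^{-s}$ converge uniformly on the line $[\text{Re}=\sigma]$; hence the trigonometric polynomials $G^\sigma_x\colon\chi\mapsto\sum_{n\le x}\frac{a_n}{n^\sigma}\chi(n)$ on $\Xi$ are uniformly Cauchy on the dense set $\beta(\mathbb{R})$, therefore uniformly Cauchy on all of $\Xi$, and they converge uniformly to some $G^\sigma\in C(\Xi)$ with $G^\sigma(\beta(t))=D(\sigma+it)$ for every $t\in\mathbb{R}$. On the other hand, for a.e. $\chi$ Theorem \ref{Helsonaxis} gives $G^\sigma(\chi)=\sum_na_n\chi(n)n^{-\sigma}=D^\chi(\sigma)=f_{\chi(\mathfrak{p})}*P_\sigma(0)$, so that $|G^\sigma(\chi)|\le\|f_{\chi(\mathfrak{p})}\|_\infty\int_{\mathbb{R}}P_\sigma\le\|f\|_\infty$ for a.e. $\chi$. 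Since $G^\sigma$ is continuous and the Haar measure has full support, this forces $\|G^\sigma\|_{C(\Xi)}\le\|f\|_\infty$, whence $|D(\sigma+it)|=|G^\sigma(\beta(t))|\le\|f\|_\infty=\|D\|_{\mathcal{H}_\infty}$ for all $t\in\mathbb{R}$ and all $\sigma>0$. Thus $D\in\mathcal{D}_\infty$ with $\|D\|_{\mathcal{D}_\infty}\le\|D\|_{\mathcal{H}_\infty}$, which recovers part of Theorem \ref{Bohr} and of the identity \eqref{bruce}. (Alternatively, one identifies $G^\sigma$ with $f*p_\sigma$ by comparing Fourier coefficients and uses $\|f*p_\sigma\|_\infty\le\|f\|_\infty$ directly, avoiding Theorem \ref{Helsonaxis}.)

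The step I expect to be the genuine obstacle is the second one: turning uniform boundedness of the partial sums on a vertical line into uniform convergence on the half-planes strictly to the right. A direct Abel-summation estimate only yields a bound carrying a spurious factor $|\text{Im}\,s|$, so one really has to import Bohr's classical abscissa formula (or an equivalent device). This is precisely where the argument falls back on Bohr's own ideas -- which is the point the section means to make.
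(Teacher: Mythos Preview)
Your argument is correct and follows the same route as the paper: from \eqref{remain} and the density of the Kronecker flow you extract uniform boundedness of the partial sums on each vertical line, then invoke the Bohr--Cahen formula for $\sigma_u$ to obtain uniform convergence on every half-plane $[\text{Re}>u]$, and finally bound the limit function via the Poisson representation $D^\chi(\sigma)=f_{\chi(\mathfrak p)}\ast P_\sigma(0)$ together with $\|f_{\chi(\mathfrak p)}\|_{L_\infty(\R)}\le\|f\|_\infty$. The only cosmetic difference is in the last step: the paper compares Fourier coefficients directly to obtain the convolution formula and then uses $\|D\|_{\mathcal D_\infty}=\|D^\chi\|_{\mathcal D_\infty}$, whereas you package the uniform limit as a continuous function $G^\sigma$ on $\Xi$ and quote Theorem~\ref{Helsonaxis}; your parenthetical alternative (identifying $G^\sigma$ with $f\ast p_\sigma$) is precisely the paper's device.
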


\begin{proof}
We fix some $D = \mathfrak{B}(f) \in \mathcal{H}_\infty$.
From  the density of the  Kronecker flow from \eqref{flow} (i.e. Kronecker's approximation theorem)
and   \eqref{remain} we deduce that for each $y>0$ and each $\psi \in \Xi$
\begin{align*}
  \sup_{t \in \mathbb{R}} \Big|\sum_{n <y}  \frac{a_n}{n^u}\psi(n)n^{-it}  \Big|
  &
       =  \sup_{z \in \mathbb{\mathbb{T}^\infty}} \Big|\sum_{\mathfrak{p}^\alpha <y}  \frac{a_{\mathfrak{p}^\alpha}}
  {\mathfrak{p}^{u\alpha}} \psi(\mathfrak{p}^\alpha)
   z^\alpha  \Big|= \Big\| \sum_{n=1}^y \frac{a_n}{n^u}\psi(n)\chi(\cdot)  \Big\|_{L_\infty(\Xi)}
    \\&
      = \Big\| \sum_{n=1}^y \frac{a_n}{n^u}\chi(\cdot)  \Big\|_{L_\infty(\Xi)}
    \leq C \|D\|_{\mathcal{H}_{\infty}}\,.
    \end{align*}
    Hence by the so-called  Bohr-Cahen formula on uniform convergence
  (see e.g. \cite[Proposition 1.6]{Defant}) we see that
  $D^\psi$ for each $u >0$ converges uniformly on
   $[\text{Re}> u]$, and consequently
  $D^\psi \in \mathcal{D}_\infty$ for all $\psi \in \Xi$. In  particular this holds for $D$ itself. By \eqref{flowflow} we know that the functions
  $f: \R \to \mathbb{C}, f_{\chi}(t) = f(\chi(\mathfrak{p}) \mathfrak{p}^{-it})$ are integrable for almost all $\chi \in \Xi$. Comparing Fourier coefficients we conclude  that
  \[
  D^\chi(u +it) = (f_{\chi(\mathfrak{p})} \ast P_u)(t)
  \]
  for almost all $\chi\in \Xi$ and all $u+it \in [\text{Re}>0]$. Observe that for all $\chi \in \Xi$
   \[
  \|D\|_{\mathcal{D}_\infty} = \|D^{\chi}\|_{\mathcal{D}_\infty}\,.
  \]
 Indeed, denote the $N$th partial sum of $D$ by $D_N$, and fix some  $u >0$. Since we already checked that
  $D$ and $D^\chi$ converge uniformly on
   $[\text{Re}> u]$, another application of \eqref{flow} yields
 \[
 \|D_u\|_{\mathcal{D}_\infty}
 =\lim_{N \to \infty} \| (D_N)_u\|_{\mathcal{D}_\infty}
 =\lim_{N \to \infty} \| (D^\chi_N)_u\|_{\mathcal{D}_\infty} = \|D_u\|_{\mathcal{D}_\infty}\,.
 \]
 Alltogether we get that there is some $\chi$ such that
 \[
 \|D\|_{\mathcal{D}_\infty} = \|D^\chi\|_{\mathcal{D}_\infty}
 = \|f_{\chi(\mathfrak{p})} \ast P_u\|_{L_\infty(\R)}
 \leq \|f_{\chi(\mathfrak{p})}\|_{L_\infty(\R)} = \|f\|_{H_\infty(\mathbb{T}^\infty)} =\|D\|_{\mathcal{H}_{\infty}}\,,
  \]
  the conclusion.
 \end{proof}

  We emphasize  that this 'outcome' of Theorem \ref{twotwo} is - in two respects - weaker than what is known from Theorem \ref{Bohr} and \eqref{bruce}.
  Indeed, \eqref{bruce} is an isometric identity and not just a contractive inclusion.

  We refer to  \cite[Corollary 5.3]{Defant} and \cite{HLS}, where the Banach space identity  \eqref{bruce} is proved using Bohr's Theorem \ref{Bohr} and going  an
  (independently interesting) 'detour' through
bounded holomorphic functions on the open unit ball of $c_0$. In the following we indicate an argument
for \eqref{bruce}
which avoids this detour, and uses instead  Corollary \ref{kreis} (so a
 'Helson-like argument') and
Theorem \ref{Bohr} (first statement). This idea goes back to \cite[Proof of Theorem 4.1]{DefantSchoolmann}.
\begin{proof}[A proof  of \eqref{bruce} using no infinite dimensional holomorphy:]
From Corollary \ref{kreis} we know that $\mathcal{H}_\infty \subset \mathcal{D}_\infty$,
and $\|D\|_{\mathcal{H}_\infty} \leq  \|D\|_{\mathcal{H}_\infty}$.

   Conversely, take $D \in \mathcal{D}_\infty$ with its associated sequence of $N$th partial sums $D^N$,
and look again for each $u>0$ at the sequence $(f^{N}_{u})_N$ in $H_\infty(\mathbb{T}^\infty)$ which corresponds to the sequence $(D^{N}_{u})$
under the Bohr transform. By \eqref{flow} and Theorem \ref{Bohr} (first statement) this is a Cauchy sequence in the Banach space $\mathcal{H}_\infty$
which has a limit $f_u$ satisfying $\|f_u\|_{\mathcal{H}_\infty} \leq \|D\|_{\mathcal{D}_\infty}$.
Now recall that the unit ball of $L_\infty(\mathbb{T}^\infty)$ endowed with its weak star topology is metrizable and compact. Hence $(f_{1/n})_n$ has a weak star convergent subsequence with limit $f \in L_\infty(\mathbb{T}^\infty)$ and
$\|f\|_{\mathcal{H}_\infty}  \leq \|D\|_{\mathcal{D}_\infty}$. Then a simply argument shows that
$f \in H_\infty(\mathbb{T}^\infty)$ and $\mathfrak{B} (f) = D$, which finishes the argument.
\end{proof}

     \section{Helson  meets Ces\`{a}ro} \label{fail}
\noindent           For $f \in H_1(\mathbb{T})$ is well-known that
\begin{equation} \label{cesaro}
f(z) = \lim_{N\to \infty} \frac{1}{N} \sum_{k=0}^{N-1} \sum_{n\leq k} \hat{f}(n) z^n
= \lim_{N \to \infty} \sum_{n=0}^{N-1} \widehat{f}(n) \left(1-\frac{n}{x}\right) z^{n}
\end{equation}
holds in the $H_1$-norm as well as pointwise almost everywhere on $\mathbb{T}$. In other words, Ces\`{a}ro
means (arithmetic means) are perfectly adapted to the summation of Fourier series of
integrable functions in one variable.

What about infinitely many variables? We know from  Theorem \ref{Helsonaxis}  that for any $D \in \mathcal{H}_1$ the sequence of Ces\`{a}ro means
$$\bigg( \frac{1}{N} \sum_{k=0}^{N-1} \sum_{n\le k} a_n \chi(n) n^{-s} \bigg)_{N\in \mathbb{N}}$$
for almost all $\chi \in \Xi$ converge on $[Re >0]$, i.e. the Ces\`{a}ro means of almost all
vertical limits $D^\chi$ converge on $[Re >0]$. And in  Theorem \ref{Helsonaxis2}  we even saw that
for $D \in \mathcal{H}_p, 1 < p < \infty$, allmost all vertical limits converge
 on the imaginary axis. But since this is false in the case $p=1$, the following question seems natural:\\

    \noindent {\bf Question:}  \label{question1}
    {\it
    Is it true that, given  $D \in \mathcal{H}_1$, the limits
    \[
    \lim_{N \to \infty}  \frac{1}{N} \sum_{k=0}^{N-1} \sum_{n \le k} a_n \chi(n) n^{-it}
    \]
exist for almost all $\chi \in \Xi$ and $t \in \mathbb{R}$? Or, by Lemma \ref{hedesaks*}
 equivalently, do we for $f \in H_1(\mathbb{T}^\infty)$ have that
\begin{equation} \label{labalaba}
    f(z) = \lim_{N \to \infty}  \frac{1}{N}  \sum_{k=0}^{N-1} \sum_{\mathfrak{p^\alpha} \le k} \hat{f}(\alpha) z^\alpha
    \end{equation}
almost everywhere on $\mathbb{T}^\infty$?\\
}
\\
Let us show, with an idea going back to Hardy  and Riesz from \cite[Thorem 21, p. 36]{HardyRiesz},  that the answer is negative. Indeed, for $f \in H_1(\mathbb{T}^\infty)$  and every $N$
\begin{align*} \label{compare}
\frac{1}{N}  \sum_{k=0}^{N-1} \sum_{\mathfrak{p^\alpha} \leq k} \hat{f}(\alpha) z^\alpha
= \sum_{\mathfrak{p^\alpha} < N} \hat{f}(\alpha) \big(1-\frac{\mathfrak{p}^\alpha}{N}\big)  z^\alpha \,.
\end{align*}
Hence, if the answer to the above question is affirmative, then for every $f \in H_1(\mathbb{T})$ we get that
\begin{equation} \label{nekar}
    \lim_{N \to \infty} \sum_{2^j < N} \hat{f}(j) \big(1-\frac{2^j}{N}\big)  z^j
     = f(z)
     \,\,\,\, \text{almost everywhere on $\mathbb{T}$\,,}
    \end{equation}
and a straight forward calculation shows
$$\sum_{2^{j}<2^{N+1}} \widehat{f}(j) z^{j} (2^{N+1}-2^{j})-\sum_{2^{j}<2^{N}} \widehat{f}(j) z^{j} (2^{N}-2^{j})=2^{N}\left(\widehat{f}(N)z^{N}+\sum_{j=0}^{N-1} \widehat{f}(j)z^{j}\right).$$
Consequently,
\begin{equation*}
\sum_{j=0}^{N-1} \widehat{f}(j)z^{j}=2\sum_{2^{j}<2^{N+1}} \widehat{f}(j) z^{j} \big(1-\frac{2^{j}}{2^{N+1}}\big)-\sum_{2^{j}<2^{N}} \widehat{f}(j) z^{j} \big(1-\frac{2^{j}}{2^{N}}\big)-\widehat{f}(N)z^{N},
\end{equation*}
and so using (\ref{nekar}) we conclude that
 for almost all $z \in \mathbb{T}$
 \[
\sum_{j=0}^{\infty} \hat{f}(j) z^j  = f(z)  \,,
 \]
a contradiction.

    \section{Helson meets Hardy-Riesz}

A  proper substitute for Ces\`{a}ro summation  (even within the setting of general Dirichlet series) was already suggested by Hardy and Riesz in \cite{HardyRiesz} where the first author writes: {\it ...it appeared from the investigations of Riesz that these arithmetic means are not so well adapted to the study of the series
as certain other means in a somewhat different manner. These 'logarithmic means', ......, have generalisations especially adapted to the study of general series $\sum a_n e^{-\lambda_n s}$.}
\\

Let $D=\sum a_n n^{-s} \in \mathcal{H}_1$  and $f\in H_1(\mathbb{T}^\infty)$
be associated under the Bohr transform $\mathfrak{B}$.
 Then the first Riesz mean (or logarithmic mean) of $D$ of  length $x>0$ is given by the Dirichlet polynomial
$$R_{x}(D):=\sum_{\log n<x} a_{n} \left(1-\frac
{\log n}{x}\right)n^{-s}\,,$$
and analogously  the analytic polynomial
$$R_{x}(f)= \mathfrak{B}^{-1}\big( R_{x}(D)\big)
=\sum_{\log \mathfrak{p}^\alpha <x} \widehat{f}(\alpha) \left(1-\frac{\log \mathfrak{p}^\alpha}{x}\right) z^\alpha$$
is  the first Riesz mean (logarithmic mean) of length $x>0$ of
$f$.\\

The following maximal inequality  is the main result from \cite[Theorem 2.1]{DefantSchoolmann3}
(even for  Riesz means of general Dirichlet series of arbitrary order $k>0$),
and it rules the logarithmic summation of functions on the infinite dimensional torus and ordinary Dirichlet series. Compare this with Theorem \ref{one} which handles  usual summation -- but only for  $p>1$.\\

Recall that the weak $L_{1}(\mathbb{T}^\infty)$-space $L_{1,\infty}(\mathbb{T}^\infty)$ consists of all measurable functions $f\colon \mathbb{T}^\infty \to \C$ for which  there is a constant $C>0$ such that for all $\alpha>0$
\begin{equation*} \label{Lorentz}
m\left(\left\{ z \in \mathbb{T}^\infty \mid |f(z)|>\alpha \right\} \right)\le \frac{C}{\alpha}.
\end{equation*}
With $\|f\|_{1,\infty}:= \inf C$, the space $L_{1,\infty}(\mathbb{T}^\infty)$ becomes a quasi Banach space.

\begin{Theo}\label{maximalineqtypeI} The sub-linear operator
$$T(f)(w):=\sup_{x>0} |R_{x}(f)(w)|=\sup_{x>0} \bigg|\sum_{\log \mathfrak{p}^{\alpha} <x} \widehat{f}(\alpha) \left(1-\frac{\log \mathfrak{p}^{\alpha}}{x}\right)z^{\alpha}\bigg|$$
is bounded from $H_{1}(\mathbb{T}^\infty)$ to $L_{1,\infty}(\mathbb{T}^\infty)$, and from $H_{p}(\mathbb{T}^\infty)$ to $L_{p}(\mathbb{T}^\infty)$, whenever $1<p\le \infty$.
\end{Theo}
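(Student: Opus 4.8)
The plan is to realize the first Riesz mean as a convolution on $\mathbb{T}^\infty$ that, through the Kronecker flow, comes from the classical Fej\'er kernel on $\mathbb{R}$; then to dominate the sub-linear operator $T$ pointwise by a Hardy--Littlewood maximal operator along the flow; and finally to deduce the two mapping properties of this maximal operator from the Euclidean ones by transference.

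First I would record the kernel representation. Since $f\in H_1(\mathbb{T}^\infty)$ has Fourier transform supported in $\mathbb{N}_0^{(\mathbb{N})}$ and $\log\mathfrak{p}^\alpha\ge 0$ for every $\alpha\ge 0$ by \eqref{order}, on the spectrum of $f$ the truncation $\big(1-\tfrac{\log\mathfrak{p}^\alpha}{x}\big)_+$ coincides with the symmetric one $\big(1-\tfrac{|\log\mathfrak{p}^\alpha|}{x}\big)_+$, so $R_x(f)=f* F_x$, where $F_x$ is the probability measure on $\mathbb{T}^\infty$ with $\widehat{F_x}(\alpha)=\big(1-\tfrac{|\log\mathfrak{p}^\alpha|}{x}\big)_+$. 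By the computation of Fourier coefficients of push-forwards under the Kronecker flow recalled in the preliminaries (the one giving $\widehat{p_u}(\alpha)=e^{-u|\log\mathfrak{p}^\alpha|}$), one has $F_x=\beta_*\phi_x$, where $\phi_x\in L_1(\mathbb{R})$ is the nonnegative Fej\'er kernel determined by $\widehat{\phi_x}(\xi)=\big(1-\tfrac{|\xi|}{x}\big)_+$. Transporting this identity through the bridge $f\mapsto(f_z)_{z\in\mathbb{T}^\infty}$ from \eqref{flowflow}, exactly as in Lemma~\ref{hedesaks*}, I would obtain for almost every $z\in\mathbb{T}^\infty$ and every $s\in\mathbb{R}$ the pointwise identity $R_x(f)\big(z\beta(s)\big)=(f_z*\phi_x)(s)$, with ordinary convolution on $\mathbb{R}$ on the right.

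Next I would use that the Fej\'er kernels admit the uniform dilation majorant $\phi_x(t)\le C\,x\,\psi(xt)$ with the fixed radially decreasing $L_1$-function $\psi(u)=\min(1,u^{-2})$; the classical estimate for convolutions with such approximate identities then gives $\sup_{x>0}\big|(f_z*\phi_x)(s)\big|\le C'\,(Mf_z)(s)$ pointwise, $M$ being the Hardy--Littlewood maximal operator on $\mathbb{R}$. Introducing the maximal operator $M_\beta g(z):=\sup_{T>0}\tfrac{1}{2T}\int_{-T}^{T}|g(z\beta(t))|\,dt$ along the flow, one checks $(Mf_z)(s)=M_\beta f\big(z\beta(s)\big)$, and combining with the previous step yields the pointwise domination $T(f)\le C'\,M_\beta f$ almost everywhere on $\mathbb{T}^\infty$. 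To finish, since $\beta$ acts on $\mathbb{T}^\infty$ by Haar-measure preserving translations, Calder\'on's transference principle carries the Euclidean bounds for $M$ — weak type $(1,1)$ and strong type $(p,p)$ for $1<p\le\infty$ — to the same bounds for $M_\beta$, i.e. $M_\beta$ maps $L_1(\mathbb{T}^\infty)$ into $L_{1,\infty}(\mathbb{T}^\infty)$ and $L_p(\mathbb{T}^\infty)$ into itself for $1<p\le\infty$. Plugging this into $T(f)\le C'\,M_\beta f$ and using $H_p(\mathbb{T}^\infty)\subset L_p(\mathbb{T}^\infty)$ delivers the asserted continuity of $T$ from $H_1(\mathbb{T}^\infty)$ to $L_{1,\infty}(\mathbb{T}^\infty)$ and from $H_p(\mathbb{T}^\infty)$ to $L_p(\mathbb{T}^\infty)$; for $p=\infty$ one may instead argue directly from $\|R_x(f)\|_\infty\le\|f\|_\infty$.

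The steps I expect to be the main obstacle are the rigorous reduction to the real line and the transference. One has to justify the identity $R_x(f)(z\beta(s))=(f_z*\phi_x)(s)$ carefully — defining the measure $\beta_*\phi_x$, matching Fourier coefficients, checking that the convolution on $\mathbb{R}$ converges for the locally integrable restrictions $f_z$ (which has at most linear integral growth by the ergodic theorem), and, above all, controlling the exceptional null set of $z$'s simultaneously for all values of the continuous parameter $x>0$ — and one has to run the transference for a \emph{maximal} operator over this continuum of parameters, which is reduced to finitely truncated means together with a countable dense set of $x$'s by monotonicity and continuity in $x$. In comparison, the Euclidean weak-$(1,1)$ and $L_p$ bounds for the Hardy--Littlewood maximal operator and the dilation majorant for the Fej\'er kernel are entirely classical.
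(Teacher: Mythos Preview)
Your proposal is correct, and at the strategic level it is the same as the paper's: dominate $T(f)$ pointwise by the Hardy--Littlewood maximal operator $\overline{M}(f)(z)=\sup_I |I|^{-1}\int_I |f(z\beta(t))|\,dt$ taken along the Kronecker flow, and then invoke the weak-$(1,1)$ and strong-$(p,p)$ bounds for $\overline{M}$.

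Where you differ is in the two tactical steps. For the pointwise domination $T(f)\le C\,\overline{M}(f)$ you identify the Riesz multiplier $(1-|\,\cdot\,|/x)_+$ directly as the Fourier transform of the Fej\'er kernel $\phi_x$ on $\mathbb{R}$, use the standard radially decreasing majorant $\phi_x(t)\le C\,x\,\min(1,(xt)^{-2})$, and appeal to the classical approximate-identity estimate. The paper instead passes through a Perron-type integral formula (its Proposition~\ref{hallo}, based on the Cauchy integral \eqref{genius} and the Poisson computation \eqref{lemma1type1ineq2}) to arrive at a convolution with the dilated kernel $K_x(a)=x/(1+(xa)^2)$, and then invokes the very same approximate-identity lemma. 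Your route is cleaner here, since recognizing the Fej\'er kernel avoids the Perron detour and the auxiliary operator $\mathcal{A}$; the paper's path, on the other hand, is written so as to generalize verbatim to Riesz means of arbitrary order $k>0$ (as in \cite{DefantSchoolmann3}), where no such elementary kernel identification is available. For the bounds on $\overline{M}$ you quote Calder\'on transference, whereas the paper proves them directly via a Vitali covering argument on the flow plus Marcinkiewicz interpolation; both are standard and equivalent in strength. The technical caveats you flag --- controlling the null sets uniformly in the continuous parameter $x$, and making the convolution $f_z*\phi_x$ meaningful for merely locally integrable $f_z$ --- are real but are handled by the same devices the paper uses (density of polynomials, Fubini against $(1+t^2)^{-1}$, reduction to countably many $x$).
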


In the following section we briefly sketch the proof of this result which seems to have some facets of independent interest.\\

But again we first want to formulate a consequence on pointwise Ces\`{a}ro summation of functions
$f \in H_1(\mathbb{T}^\infty)$ as well as a  Helson type theorem on logarithmic summation of vertical limits of
ordinary Dirichlet series. From Theorem \ref{maximalineqtypeI} and some standard arguments (which as mentioned above were formalized in
\cite[Lemma 3.6]{DefantSchoolmann3}) the next two results are given in
\cite[Corollary 2.2]{DefantSchoolmann3} and
\cite[Corollary 2.7]{DefantSchoolmann3}.

\begin{Coro}
Let $f \in H_{1}(\T^{\infty})$.
\begin{itemize}
\item[(1)]
For almost all $z\in \T^{\infty}$
\[
\lim_{x\to \infty} \sum_{\log \mathfrak{p}^{\alpha} <x} \widehat{f}(\alpha) \left(1-\frac{\log \mathfrak{p}^{\alpha}}{x}\right)z^{\alpha}=f(z).
\]
\item[(2)]
There is a null set $N \subset \mathbb{T}^\infty$ such that for all $u>0$ and all  $z\notin N $
\begin{align*}
\lim_{x\to \infty} \sum_{\log \mathfrak{p}^{\alpha} <x} \widehat{f}(\alpha)
  e^{-u \log \mathfrak{p}^{\alpha}}
    \left(1-\frac{\log \mathfrak{p}^{\alpha}}{x}\right)z^{\alpha} = f \ast p_u (z)
\end{align*}
\end{itemize}
\end{Coro}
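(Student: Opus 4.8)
The plan is to run the standard ``maximal inequality $\Rightarrow$ almost everywhere convergence'' argument built on Theorem~\ref{maximalineqtypeI} for part~(1), and to reduce part~(2) to Theorem~\ref{Helsonaxis} using that Riesz (logarithmic) summation is a regular method.

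For (1): the analytic polynomials $g=\sum_{\alpha\in F}\widehat g(\alpha)z^\alpha$, $F\subset\mathbb{N}_0^{(\mathbb{N})}$ finite, are dense in $H_1(\mathbb{T}^\infty)$, and for such $g$ one has $R_x(g)(z)\to g(z)$ uniformly in $z$, since $|R_x(g)(z)-g(z)|\le x^{-1}\sum_{\alpha\in F}|\widehat g(\alpha)|\log\mathfrak{p}^\alpha$. Fix $f\in H_1(\mathbb{T}^\infty)$ and a polynomial $g$; sublinearity of the operator $T$ of Theorem~\ref{maximalineqtypeI} together with this base case gives $\limsup_{x,x'\to\infty}|R_x(f)(z)-R_{x'}(f)(z)|\le 2\,T(f-g)(z)$ for every $z$, so $R_x(f)(z)$ converges for a.e.\ $z$ (let $g\to f$ in $H_1$ and use $\|T(f-g)\|_{1,\infty}\le C\|f-g\|_1$). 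Writing $R_x(f)-f=R_x(f-g)+(R_x(g)-g)+(g-f)$ and letting $x\to\infty$, the a.e.\ limit $h$ satisfies $|h-f|\le T(f-g)+|f-g|$ a.e., whence $h=f$ a.e.\ because the right-hand side is small in $L_{1,\infty}+L_1$ as $g\to f$. This is (1), and is exactly the mechanism formalised in \cite[Lemma 3.6]{DefantSchoolmann3}.

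For (2): since $f\in H_1(\mathbb{T}^\infty)$ has Fourier support in $\mathbb{N}_0^{(\mathbb{N})}$, where $\widehat{p_u}(\alpha)=e^{-u\log\mathfrak{p}^\alpha}$, the sum displayed in (2) equals $R_x(f*p_u)(z)$ with $f*p_u\in H_1(\mathbb{T}^\infty)$. Writing $S_y(h)(z):=\sum_{\log\mathfrak{p}^\alpha<y}\widehat h(\alpha)z^\alpha$ for the ordinary partial sums ordered by frequency, one has the elementary identity $R_x(h)(z)=\tfrac1x\int_0^x S_y(h)(z)\,dy$; hence convergence of $S_y(h)(z)$ as $y\to\infty$ forces $R_x(h)(z)$ to converge to the same value. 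Now Theorem~\ref{Helsonaxis} furnishes a single null set $N\subset\mathbb{T}^\infty$ (the image under the identification $\iota\colon\Xi\to\mathbb{T}^\infty$ of the exceptional set there) such that for every $z\notin N$ and every $u>0$ the associated Dirichlet series converges at $s=u$, i.e.\ $S_y(f*p_u)(z)\to (f*p_u)(z)$ as $y\to\infty$. Combining the two, $R_x(f*p_u)(z)\to (f*p_u)(z)$ for all $u>0$ and all $z\notin N$, which is exactly (2).

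The real content sits in the inputs, not in the deductions. For (1) it is Theorem~\ref{maximalineqtypeI}: the weak type $(1,1)$ endpoint is the genuinely hard point (its proof is sketched in the following section), and once it is granted the rest is the textbook maximal-to-a.e.\ passage. For (2) the hard point has already been discharged inside Theorem~\ref{Helsonaxis}, namely producing one exceptional set valid for all translations $u>0$ simultaneously; after that, (2) is merely the observation that order-one logarithmic means are obtained by integral averaging and so inherit convergence from the ordinary partial sums. If one instead insisted on deriving (2) directly from Theorem~\ref{maximalineqtypeI}, the obstacle would again be exactly this uniformity in $u$ --- one would need a weak type bound for the family $\sup_{u>0}\sup_{x>0}|R_x(\,\cdot\,*p_u)|$ --- which the route through Theorem~\ref{Helsonaxis} sidesteps.
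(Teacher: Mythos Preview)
Your argument is correct. For part~(1) you follow exactly the route the paper indicates: the weak type $(1,1)$ bound of Theorem~\ref{maximalineqtypeI} plus density of polynomials gives almost everywhere convergence via the standard mechanism (what the paper cites as \cite[Lemma~3.6]{DefantSchoolmann3}).

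For part~(2) your route genuinely differs from the paper's. The paper derives both (1) and (2) from Theorem~\ref{maximalineqtypeI}; the uniformity of the null set in $u$ is handled in \cite{DefantSchoolmann3} within the Riesz-means framework itself. You instead observe that $R_x(h)=\tfrac1x\int_0^x S_y(h)\,dy$, so that Riesz summability is inherited from ordinary convergence, and then import the uniform null set directly from Theorem~\ref{Helsonaxis}. This is a legitimate and rather elegant shortcut: the hard uniformity statement has already been established earlier in the survey (ultimately via Menchoff--Rademacher plus hypercontractivity), and you simply cash it in. What the paper's approach buys is self-containment within the Riesz-means maximal inequality, and it would extend to situations where no analogue of Theorem~\ref{Helsonaxis} is available (e.g.\ Riesz means of higher order $k$, where the order-$k$ means are not a simple average of ordinary partial sums). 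Your approach buys economy when Theorem~\ref{Helsonaxis} is already on the table, and makes the logical dependence transparent: for (2) one does not actually need the weak $(1,1)$ bound of Theorem~\ref{maximalineqtypeI} at all.

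One small remark: the limit displayed in Theorem~\ref{Helsonaxis} as stated reads $f(z)$, which is a typo for $(f\ast p_u)(z)$; you are correctly using the latter value, which is what the ``more precisely'' clause there actually asserts.
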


Note that
$f \ast p_u \in H_1(\mathbb{T}^\infty)$
for all $u >0$ and $f \in H_1(\mathbb{T}^\infty)$, and hence by (1) we know that $\lim_{x\to \infty} R_x(f \ast p_u) = f \ast p_u $
almost everywhere. But the point of (2) is that the null set $N$ works  for all $u$
simultaneously. In \cite[Proposition 2.4]{DefantSchoolmann3} it is proved that
$\lim_{u \to \infty} f \ast p_u = f$ almost everywhere, so (1) is definitely the border case of (2).\\

 Lemma \ref{hedesaks*} transports  these results on almost everywhere pointwise limit of Riesz means of functions  on the infinite dimensional torus to  almost everywhere pointwise convergence  on the imaginary axis of almost all the vertical limits of their associated Dirichlet series.

\begin{Coro}
Let   $D \in \mathcal{H}_1$ and $f \in H_{1}(\T^{\infty})$ its associated function under the Bohr transform. Then
there is a null set $N \subset \Xi$ such that for all $\chi \notin N$
\begin{itemize}
\item[(1)]
$
\displaystyle
\lim_{x\to \infty} \sum_{\log n < x} a_n \chi(n)\left(1-\frac{\log n}{x}\right)=
f\big(\chi(\mathfrak{p})\big)
$
\item[(2)]
$
\displaystyle
\lim_{x\to \infty} \sum_{\log n < x} a_n \chi(n)\left(1-\frac{\log n}{x}\right) n^{-it}=
f\Big( \frac{\chi(\mathfrak{p})}{\mathfrak{p}^{it}}\Big)
$
for almost all $t \in \mathbb{R}$
\item[(3)]
$
\displaystyle
\lim_{x\to \infty} \sum_{\log n < x} a_n \chi(n)\left(1-\frac{\log n}{x}\right) n^{-(u+it)}=
f_\chi \ast P_u (t)
$ for  all $u+it \in [\text{Re}>0]$
\end{itemize}
\end{Coro}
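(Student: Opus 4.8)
The plan is to derive all three assertions from the preceding corollary on Riesz means of functions on $\mathbb{T}^\infty$ (itself resting on the maximal inequality of Theorem~\ref{maximalineqtypeI}) via the transference Lemma~\ref{hedesaks*}, treating the passage to every point of $[\text{Re}>0]$ in~(3) with the help of Helson's theorem in the form of Theorem~\ref{Helsonaxis}. First I would record the elementary dictionary: for $x>0$ one has $R_x(D)^\chi=R_x(D^\chi)$, under the Bohr transform \eqref{Bohrtrafo} the Dirichlet polynomial $R_x(D)$ corresponds to the analytic polynomial $R_x(f)$, and passing to the vertical limit $\chi$ amounts to the rotation $z\mapsto R_x(f)(\chi(\mathfrak{p})z)$. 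Evaluating the resulting polynomial Dirichlet series at $s=u+it$, using $\mathfrak{p}^{-it}=\beta(t)$ and $\widehat{f\ast p_u}(\alpha)=\widehat{f}(\alpha)e^{-u\log\mathfrak{p}^\alpha}$ on the spectrum of $f$ (where $\alpha\ge 0$), one obtains
\[
R_x(D)^\chi(u+it)=\sum_{\log\mathfrak{p}^\alpha<x}\widehat{f}(\alpha)\,e^{-u\log\mathfrak{p}^\alpha}\Big(1-\frac{\log\mathfrak{p}^\alpha}{x}\Big)\big(\chi(\mathfrak{p})\beta(t)\big)^\alpha=R_x(f\ast p_u)\big(\chi(\mathfrak{p})\beta(t)\big),
\]
so in particular $R_x(D)^\chi(it)=R_x(f)(\chi(\mathfrak{p})\beta(t))$ and $R_x(D)^\chi(0)=R_x(f)(\chi(\mathfrak{p}))$, while the same computation gives $R_x(D)^\chi(u+it)=R_x(D_u)^\chi(it)$ for the horizontal translate $D_u\in\mathcal{H}_1$ associated to $f\ast p_u\in H_1(\mathbb{T}^\infty)$.

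For (1) and (2) I would apply Lemma~\ref{hedesaks*} to the polynomials $f_n:=R_{x_n}(f)$ and the Dirichlet polynomials $D_n:=R_{x_n}(D)$ along a sequence $x_n\to\infty$. Part~(1) of the preceding corollary says $f_n(z)\to f(z)$ almost everywhere, i.e. statement~(1) of Lemma~\ref{hedesaks*} holds; its equivalent forms~(3) and~(4) are then precisely assertions (1) and (2) of the present corollary, upon noting $f(\chi(\mathfrak{p})/\mathfrak{p}^{it})=f_{\chi(\mathfrak{p})}(t)$. Passing from the sequence $(x_n)$ to the full continuous parameter $x$ is routine, since $x\mapsto R_x(g)(w)$ is continuous for every analytic polynomial $g$; this is the standard argument formalised in \cite[Lemma 3.6]{DefantSchoolmann3}. (Assertion (1) can in fact be read off directly: it is part~(1) of the preceding corollary evaluated at the point $w=\chi(\mathfrak{p})$, the Haar measure on $\Xi$ being the push-forward of the Lebesgue measure on $\mathbb{T}^\infty$ under $\chi\mapsto\chi(\mathfrak{p})$.)

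For (3) I would use that, by Theorem~\ref{Helsonaxis}, there is a null set outside which $D^\chi$ converges on all of $[\text{Re}>0]$ with $D^\chi(u+it)=f_{\chi(\mathfrak{p})}\ast P_u(t)$; the classical consistency theorem for first Riesz (logarithmic) means --- ordinary convergence of a Dirichlet series at a point implies summability of its logarithmic means there to the same value (see \cite{HardyRiesz}) --- then forces $R_x(D^\chi)(u+it)\to D^\chi(u+it)$ at every $u+it\in[\text{Re}>0]$, which is exactly (3). In parallel with (1)--(2), one may instead rerun the Lemma~\ref{hedesaks*} argument of the previous paragraph with $D_u$ in place of $D$, relying on $R_x(D)^\chi(u+it)=R_x(D_u)^\chi(it)$, on the exceptional set in part~(2) of the preceding corollary being \emph{uniform in $u$}, and on the identity $(f\ast p_u)_{\chi(\mathfrak{p})}=f_{\chi(\mathfrak{p})}\ast P_u$ that results from $p_u$ being the push-forward of $P_u$ under $\beta$ (see \cite{DefantSchoolmann}); this produces convergence for all $u>0$ and almost every $t$, after which the step to every $t$ is again supplied by Theorem~\ref{Helsonaxis}. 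Either way one enlarges the null set once more and is done.

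The only genuinely analytic ingredient, the maximal inequality of Theorem~\ref{maximalineqtypeI}, is already available through the preceding corollary, so what remains is organisational. I expect the one point requiring care to be (3): producing a \emph{single} exceptional set in $\chi$ that works simultaneously for all $u>0$ and all $t\in\mathbb{R}$. Uniformity in $u$ is inherited from the preceding corollary, but the upgrade from almost all $t$ to all $t$ does not follow from the maximal inequality alone --- this is precisely where Theorem~\ref{Helsonaxis} (honest convergence of $D^\chi$ on the open half-plane) and the Hardy--Riesz consistency theorem (identification of the Riesz limit with that value) come in.
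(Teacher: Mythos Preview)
Your proposal is correct and follows the paper's line. The paper itself gives no detailed proof here --- its entire argument is the single sentence that Lemma~\ref{hedesaks*} ``transports'' the preceding corollary on Riesz means of functions on $\mathbb{T}^\infty$ to the Dirichlet-series side, which is precisely what you do for (1) and (2). For (3) you rightly notice that Lemma~\ref{hedesaks*} and part~(2) of the preceding corollary by themselves only yield convergence for almost every $t$, not every $t$, and you close this with Theorem~\ref{Helsonaxis} together with the classical Hardy--Riesz consistency theorem (ordinary convergence of $D^\chi$ at a point implies convergence of its first logarithmic means there to the same value). That extra step is exactly the point the paper's one-line sketch suppresses, and your way of handling it is the natural one.
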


 \section{Helson meets Hardy-Littlewood} \label{HaLi}
Here we want to sketch the proof of Theorem \ref{maximalineqtypeI}, since we feel that it has some features  which are  independently interesting.\\

 One of the central tools  is given by the following Hardy-Littlewood maximal operator. If $f \in L_{1}(\mathbb{T}^\infty)$, then we define for $z \in \mathbb{T}^\infty$
\begin{equation} \label{maximaloperatordef}
\overline{M}(f)(z):=\sup_{I\subset \R} \frac{1}{|I|}\int_{I} \Big|f\Big(\frac{z}{\mathfrak{p}^{it}}\Big)\Big| ~dt,
\mathfrak{}\end{equation}
where $I$ stands for any interval in $\R$ with  Lebesgue measure $|I|$. Recall from \eqref{flowflow} that
$f\Big(\frac{z}{\mathfrak{p}^{it}}\Big)= f(\beta(t)z) =  f_{z}(t)$ for almost all $z \in \mathbb{T}^\infty$ defines a locally integrable function on $\R$, and so $\overline{M}(f)(z)$ is defined almost everywhere.

\begin{Theo} \label{HLoperator} The sublinear operator $\overline{M}$ is bounded from $L_{1}(\mathbb{T}^\infty)$ to $L_{1,\infty}(\mathbb{T}^\infty)$ and from $L_{p}(\mathbb{T}^\infty)$ to $L_{p}(\mathbb{T}^\infty)$, whenever $1<p\le \infty.$
\end{Theo}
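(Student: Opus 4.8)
\emph{Proposal.} The plan is to deduce Theorem~\ref{HLoperator} from the classical one-dimensional Hardy--Littlewood maximal theorem by a transference argument (in the spirit of Calder\'{o}n), using the Kronecker flow $\beta\colon\R\to\T^\infty$ to ``unroll'' $\T^\infty$ onto the line. The structural input I would rely on is that for a.e.\ $z\in\T^\infty$ the section $f_z\colon t\mapsto f(z\beta(t))=f(z/\mathfrak{p}^{it})$ is locally integrable on $\R$ (this is \cite[Lemma 3.10]{DefantSchoolmann}), that along an orbit $f_{z\beta(s)}(t)=f_z(s+t)$, and hence that
\[
\overline{M}(f)\big(z\beta(s)\big)=\big(Mf_z\big)(s),
\]
where $M$ denotes the (uncentered) one-dimensional Hardy--Littlewood maximal operator. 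Besides this I would only use that each translation $z\mapsto z\beta(s)$ preserves the Haar measure of $\T^\infty$, so that $\int_{\T^\infty}|f_z(t)|^p\,dz=\|f\|_p^p$ for every $t$ (and likewise with the $L_1$-norm).

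The case $p=\infty$ is immediate: since $|f|\le\|f\|_\infty$ off a null set, Fubini gives $|f_z(t)|\le\|f\|_\infty$ for a.e.\ $t$ and a.e.\ $z$, hence $\overline{M}(f)\le\|f\|_\infty$. For $1\le p<\infty$ I would first work with the truncated operators $\overline{M}^{(\delta)}$, $\delta>0$, formed from averages over intervals of length $\le\delta$; these increase pointwise to $\overline{M}$ as $\delta\uparrow\infty$. Fix $\delta>0$ and a large $L>0$. For a.e.\ $z$ set $g_z:=f_z\,\mathbf{1}_{[-\delta,\,L+\delta]}$, an honest function in $L_1(\R)$ (resp.\ $L_p(\R)$). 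For $s\in[0,L]$ the value $\overline{M}^{(\delta)}(f)(z\beta(s))=(M^{(\delta)}f_z)(s)$ depends only on $f_z$ on $[-\delta,L+\delta]$, so it equals $(M^{(\delta)}g_z)(s)\le(Mg_z)(s)$, where $M^{(\delta)}$ is the truncated $1$-dimensional maximal operator.

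Then I would apply the classical weak-$(1,1)$ bound (resp.\ the strong $(p,p)$ bound for $1<p<\infty$) to $g_z$ on $\R$, integrate over $z\in\T^\infty$, and use Fubini together with the measure-preservation. In the $L_1$ case this gives
\[
\int_{\T^\infty}\Big|\big\{s\in[0,L]:\overline{M}^{(\delta)}(f)(z\beta(s))>\lambda\big\}\Big|\,dz
\ \le\ \frac{C}{\lambda}\int_{\T^\infty}\!\int_{-\delta}^{L+\delta}|f_z(t)|\,dt\,dz
\ =\ \frac{C\,(L+2\delta)}{\lambda}\,\|f\|_1,
\]
while, by the flow-invariance of the Haar measure, the left-hand side equals $L\cdot m\big(\{\overline{M}^{(\delta)}(f)>\lambda\}\big)$. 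Dividing by $L$ and letting $L\to\infty$ kills the boundary term and yields $m(\{\overline{M}^{(\delta)}(f)>\lambda\})\le(C/\lambda)\|f\|_1$ with $C$ the one-dimensional weak-$(1,1)$ constant, independently of $\delta$; monotone convergence in $\delta$ then gives the weak-$(1,1)$ bound for $\overline{M}$. The strong $(p,p)$ case runs identically: one bounds $\int_0^L(M^{(\delta)}g_z(s))^p\,ds\le\int_\R(Mg_z(s))^p\,ds\le C_p^p\|g_z\|_{L_p(\R)}^p$, integrates over $z$, lets $L\to\infty$ and then $\delta\to\infty$, obtaining $\|\overline{M}(f)\|_{L_p(\T^\infty)}\le\|M\|_{L_p(\R)\to L_p(\R)}\|f\|_{L_p(\T^\infty)}$.

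I expect the transference mechanism itself to be routine; the delicate part is the measure-theoretic bookkeeping around the sections $f_z$ --- guaranteeing that $f_z$ is genuinely locally integrable, that the identities $f_{z\beta(s)}(t)=f_z(s+t)$ and $\overline{M}(f)(z\beta(s))=(Mf_z)(s)$ hold for \emph{a.e.} $(z,s)$ simultaneously, and that each Fubini interchange is legitimate. This is exactly where \cite[Lemma 3.10]{DefantSchoolmann} and the invariance of the Haar measure under the Kronecker flow come in; everything else is the classical Hardy--Littlewood theory on $\R$.
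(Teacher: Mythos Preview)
Your transference argument is correct. The identity $\overline{M}(f)(z\beta(s))=(Mf_z)(s)$ holds exactly for the reasons you give, the localisation via the truncated operators $\overline{M}^{(\delta)}$ together with the cut-off $g_z=f_z\mathbf{1}_{[-\delta,L+\delta]}$ is the standard Calder\'on device, and the Fubini/invariance step turning $\int_{\T^\infty}\int_0^L(\cdots)\,ds\,dz$ into $L$ times the quantity you want is clean. Letting $L\to\infty$ and then $\delta\to\infty$ by monotone convergence is unproblematic. One small point you leave implicit is the measurability of $\overline{M}^{(\delta)}(f)$ on $\T^\infty$; this is handled in the usual way by restricting the supremum to intervals with rational endpoints.

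Your route, however, differs from the one the paper points to. The paper does not transfer to the known one-dimensional theorem; instead it establishes the weak-$(1,1)$ bound on $\T^\infty$ directly via Vitali's covering lemma (using the flow to play the role of intervals), and then obtains the strong $(p,p)$ bounds by Marcinkiewicz interpolation between weak-$(1,1)$ and the trivial $L_\infty$ bound. In effect the paper \emph{re-proves} the Hardy--Littlewood mechanism in the $\T^\infty$-setting, while you cite the $\R$-theorem as a black box and transfer both the weak-$(1,1)$ and the strong $(p,p)$ inequalities in one stroke. Your approach is shorter and yields the one-dimensional constants $C$ and $\|M\|_{L_p\to L_p}$ on the nose; the paper's approach is more self-contained and makes transparent that nothing beyond Vitali and interpolation is needed.
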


The details of the proof are given in \cite[Theorem 2.1]{DefantSchoolmann3}, and it is not too surprising that the first part of
the proof uses Vitali's covering lemma, whereas the second part then follows applying the Marcinkiewicz  interpolation theorem.\\

 But before we discuss how to apply the preceding theorem,  let us give the following direct consequence, which is of independent interest (see \cite[Corollary 2.11]{DefantSchoolmann3}, and compare the second equality with  \eqref{norm}).
\begin{Coro} \label{differentiation} Let $f \in L_{1}(\mathbb{T}^\infty)$. Then for almost all $z \in \mathbb{T}^\infty$ we have
\begin{equation*} \label{difff}
\lim_{T\to 0} \frac{1}{2T} \int_{-T}^{T}f\Big(\frac{z}{\mathfrak{p}^{it}}\Big)dt =f(z),
\end{equation*}
\begin{equation*} \label{besiconorm2}
\lim_{T\to \infty} \frac{1}{2T} \int_{-T}^{T} f\Big(\frac{z}{\mathfrak{p}^{it}}\Big)~ dt=
\int_{\mathbb{T}^\infty}  f(w) dw.
\end{equation*}
\end{Coro}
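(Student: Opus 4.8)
The plan is to deduce both limits from the weak-type $(1,1)$ and $L_p$ bounds for the maximal operator $\overline{M}$ in Theorem \ref{HLoperator}, via the classical Lebesgue differentiation scheme in which the role of the ``good'' functions is played by the trigonometric polynomials on $\mathbb{T}^\infty$. Throughout one works on the full-measure set of $z \in \mathbb{T}^\infty$ for which, by \eqref{flowflow}, the function $t \mapsto f_z(t) = f(z/\mathfrak{p}^{it})$ is locally integrable, so that the averages $A_T f(z) := \frac{1}{2T}\int_{-T}^{T} f_z(t)\,dt$ are well defined; the two asserted limits are then the statements $\lim_{T\to 0} A_Tf(z)=f(z)$ and $\lim_{T\to\infty}A_Tf(z)=\int_{\mathbb{T}^\infty}f\,dw$ for almost every $z$.

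First I would settle the statement for a trigonometric polynomial $g = \sum_\alpha c_\alpha z^\alpha$ (a finite sum over $\alpha \in \mathbb{Z}^{(\mathbb{N})}$). Here $t \mapsto g(z\beta(t))$ is continuous on $\mathbb{R}$ for every $z$, and since $(z\beta(t))^\alpha = z^\alpha e^{-it\log\mathfrak{p}^\alpha}$ an elementary computation gives
\[
A_T g(z) = \sum_\alpha c_\alpha z^\alpha\, \frac{\sin(T\log\mathfrak{p}^\alpha)}{T\log\mathfrak{p}^\alpha},
\]
where the $\alpha = 0$ summand is read as $c_0$. Letting $T \to 0$ yields $A_T g(z) \to \sum_\alpha c_\alpha z^\alpha = g(z)$, while letting $T \to \infty$ only the $\alpha = 0$ term survives (as $\log\mathfrak{p}^\alpha \neq 0$ for $\alpha\neq 0$), so $A_T g(z) \to c_0 = \int_{\mathbb{T}^\infty} g\,dw$; both hold for every $z$. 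Moreover, trigonometric polynomials are dense in $C(\mathbb{T}^\infty)$ and hence in $L_1(\mathbb{T}^\infty)$, which provides the required dense ``good'' class.

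Next comes the standard maximal argument. Write $\ell(z) = f(z)$ for the first limit and $\ell(z) = \int_{\mathbb{T}^\infty} f\,dw$ for the second, and set $\Lambda f(z) = \limsup_{T \to T_0} |A_T f(z) - \ell(z)|$ with $T_0 = 0$ (resp. $T_0 = \infty$). Given $\varepsilon > 0$, choose a trigonometric polynomial $g$ with $\|f - g\|_1 < \varepsilon$ and put $h = f - g$; by the previous step $\Lambda g \equiv 0$, so sublinearity of $\limsup$ gives $\Lambda f \le \Lambda h$, and since $\sup_{T>0} A_T(|h|)(z) \le \overline{M}(h)(z)$ and $|\int_{\mathbb{T}^\infty} h\,dw| \le \|h\|_1$ one has $\Lambda h(z) \le \overline{M}(h)(z) + |h(z)|$ when $T_0 = 0$ and $\Lambda h(z) \le \overline{M}(h)(z) + \|h\|_1$ when $T_0 = \infty$. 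Fix $\lambda > 0$. In the first case, the weak-type $(1,1)$ bound of $\overline{M}$ from Theorem \ref{HLoperator} together with Chebyshev's inequality gives $m(\{\Lambda f > \lambda\}) \le m(\{\overline{M}(h) > \lambda/2\}) + m(\{|h| > \lambda/2\}) \le C\varepsilon/\lambda$; in the second case, choosing in addition $\varepsilon < \lambda/2$ forces $\{\Lambda f > \lambda\} \subset \{\overline{M}(h) > \lambda/2\}$, so again $m(\{\Lambda f > \lambda\}) \le C\varepsilon/\lambda$. Since $\varepsilon$ is arbitrary, $m(\{\Lambda f > \lambda\}) = 0$ for every $\lambda > 0$; letting $\lambda$ run through $1/n$ and taking a union shows $\Lambda f = 0$ almost everywhere, which is exactly the claim.

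I do not expect a genuine obstacle: the entire content sits in the maximal inequality of Theorem \ref{HLoperator}, just as in the one-dimensional Lebesgue differentiation theorem. The only points requiring a little care are that the base case really does hold \emph{pointwise} for trigonometric polynomials (the elementary Fourier/equidistribution computation above), that trigonometric polynomials are dense in $L_1(\mathbb{T}^\infty)$, and that one has restricted at the outset to the full-measure set on which $f_z$ is locally integrable, so that all the averages and the Hardy--Littlewood maximal function are genuinely defined.
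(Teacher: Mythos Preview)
Your argument is correct and follows precisely the route the paper indicates: the survey does not spell out a proof here but presents the corollary as a ``direct consequence'' of the maximal inequality in Theorem~\ref{HLoperator}, and what you have written is exactly the standard Lebesgue-differentiation scheme (dense good class of trigonometric polynomials plus weak-type $(1,1)$ bound) that makes this deduction rigorous. The explicit computation for polynomials and the handling of the $T\to\infty$ case via the extra $\|h\|_1$ term are both fine.
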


The next theorem reduces the proof of Theorem \ref{maximalineqtypeI} to Theorem \ref{HLoperator}.
\begin{Theo} \label{reduction} Let  $f \in H_{1}(\mathbb{T}^\infty)$. Then  for almost all $w \in \mathbb{T}^\infty$ have
\begin{equation*} \label{reductioneq}
T(f)(w)=\sup_{x>0} \bigg|\sum_{\log \mathfrak{p}^\alpha <x} \widehat{f}(\alpha) \left(1-\frac{\log \mathfrak{p}^\alpha}{x}\right) z^\alpha\bigg|\le C\,\overline{M}(f)(z)\,,
\end{equation*}
where $C>0$ is an  absolute constant.
\end{Theo}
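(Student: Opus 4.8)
The plan is to transfer everything to the real line via the Kronecker flow and to recognise the Riesz mean $R_x(f)$ as a convolution of the flow restriction $f_z$ against the Fej\'er kernel on $\mathbb{R}$. Recall from the discussion around \eqref{flowflow} that $z^{\alpha}\circ\beta = e^{-i(\log\mathfrak{p}^{\alpha})\,\pmb{\cdot}}$, so that for almost all $z$ the restriction $f_z(t)=f(\beta(t)z)$ carries the ``Fourier coefficient'' $\widehat{f}(\alpha)z^{\alpha}$ at the frequency $\log\mathfrak{p}^{\alpha}$. Let $F_x$ be the first Fej\'er kernel on $\mathbb{R}$, i.e.\ the function with $\widehat{F_x}(\xi)=\bigl(1-|\xi|/x\bigr)_{+}$; explicitly $F_x(t)=\tfrac{2\sin^{2}(xt/2)}{\pi x t^{2}}\ge 0$ with $\|F_x\|_{1}=1$. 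Matching frequencies through the flow, one expects the key identity
\[
R_x(f)(z)=\int_{\mathbb{R}} f_z(s)F_x(s)\,ds = (f_z*F_x)(0)\qquad\text{for a.e. }z\in\mathbb{T}^{\infty}\text{ and all }x>0,
\]
since $\widehat{F_x}(\log\mathfrak{p}^{\alpha})=\bigl(1-\log\mathfrak{p}^{\alpha}/x\bigr)_{+}$ vanishes precisely when $\log\mathfrak{p}^{\alpha}\ge x$ (here $\log\mathfrak{p}^{\alpha}\ge 0$, as $\widehat{f}$ is supported on $\mathbb{N}_0^{(\mathbb{N})}$).

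To prove the identity I would first verify it for trigonometric polynomials $f$, where it is a one-line computation with finite sums and Fubini. For general $f\in H_1(\mathbb{T}^{\infty})$ choose polynomials $f_n\to f$ in $H_1$. For each fixed $x>0$ the sum defining $R_x(\cdot)(z)$ runs over the \emph{finite} set $\{\alpha:\log\mathfrak{p}^{\alpha}<x\}$, so $R_x(f_n)(z)\to R_x(f)(z)$ for every $z$. On the other side, by the $L_{1,\infty}$-part of Theorem \ref{HLoperator} we may, after passing to a subsequence, assume $\overline{M}(f_n-f)(z)\to 0$ (hence $(f_n)_z\to f_z$ in $L^{1}_{\mathrm{loc}}$) for a.e.\ $z$; together with the uniform kernel bound of the next step this gives $(f_n)_z*F_x(0)\to f_z*F_x(0)$ for a.e.\ $z$, uniformly in $x$. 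Since the exceptional null set is independent of $x$, the displayed identity holds for a.e.\ $z$ and all $x>0$ simultaneously.

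The remaining ingredient is the classical pointwise domination of a convolution by the Hardy--Littlewood maximal function. The Fej\'er kernel oscillates, but it admits the even majorant $\widetilde{F}_x(t)=\tfrac{x}{2\pi}\min\bigl(1,4/(x^{2}t^{2})\bigr)$, which is nonincreasing in $|t|$ with $\|\widetilde{F}_x\|_{1}=4/\pi$ \emph{independently of} $x$. Writing $\widetilde{F}_x$ as a superposition of normalised indicators of symmetric intervals (layer-cake decomposition) yields, for every locally integrable $g$ on $\mathbb{R}$ with finite Hardy--Littlewood maximal function at $0$,
\[
\int_{\mathbb{R}}|g(s)|\,\widetilde{F}_x(s)\,ds\ \le\ \|\widetilde{F}_x\|_{1}\,\sup_{I\ni 0}\frac{1}{|I|}\int_{I}|g(s)|\,ds\ \le\ \frac{4}{\pi}\,\sup_{I\subset\mathbb{R}}\frac{1}{|I|}\int_{I}|g(s)|\,ds .
\]
Applying this with $g=f_z$ and recalling $f(z/\mathfrak{p}^{it})=f_z(t)$, the last quantity equals $\tfrac{4}{\pi}\,\overline{M}(f)(z)$, finite for a.e.\ $z$ by Theorem \ref{HLoperator}. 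Hence for a.e.\ $z$ and all $x>0$,
\[
|R_x(f)(z)|=|(f_z*F_x)(0)|\le\int_{\mathbb{R}}|f_z(s)|F_x(s)\,ds\le\int_{\mathbb{R}}|f_z(s)|\widetilde{F}_x(s)\,ds\le\frac{4}{\pi}\,\overline{M}(f)(z),
\]
and taking the supremum over $x>0$ gives $T(f)(z)\le C\,\overline{M}(f)(z)$ with $C=4/\pi$.

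I expect the only genuinely delicate point to be the rigorous justification of the identity $R_x(f)(z)=(f_z*F_x)(0)$ for merely $H_1$-functions: controlling the limit in $n$ simultaneously for all $x$ while $f_z$ is only locally integrable. This is exactly where the $L_{1,\infty}$-estimate of Theorem \ref{HLoperator} (and not just its $L_p$-part) is used, and it also explains why the naive route through the partial-sum representation $R_x(f)(z)=\tfrac1x\int_0^x S_u(f)(z)\,du$ is less convenient, since the inner partial sums $S_u(f)(z)$ need not converge. Everything else is standard real analysis on $\mathbb{R}$.
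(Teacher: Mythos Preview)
Your argument is correct, and it reaches the same endpoint as the paper's proof (domination of a convolution with a radially decreasing $L_1$-kernel by the Hardy--Littlewood maximal function, as in \cite[Theorem~2.1.10]{Grafakos1}), but via a genuinely different and more elementary route. The paper does \emph{not} recognise $R_x(f)(z)$ as a Fej\'er convolution of $f_z$; instead it establishes a Perron-type formula (Proposition~\ref{hallo}) by means of the Cauchy integral \eqref{genius} and the Poisson kernel, which after some estimates produces the majorising kernel $K_x(a)=x/(1+a^2x^2)$ rather than your $\widetilde{F}_x$. Your identity $R_x(f)(z)=(f_z\ast F_x)(0)$ is more transparent and bypasses both the complex integration and the auxiliary operator $\mathcal{A}$ from \eqref{operatorA}; the density step you run through the weak-type bound of Theorem~\ref{HLoperator} plays the role that the $L_1$-boundedness of $\mathcal{A}$ plays in the paper. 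What the paper's approach buys is a template that extends verbatim to Riesz means of arbitrary order $k>0$ (as announced in \cite{DefantSchoolmann3}), since the Perron formula \eqref{genius} has an immediate $k$th-order analogue, whereas your Fej\'er-kernel identification would require replacing $F_x$ by the corresponding Riesz kernel and checking anew that its radial majorant has $L_1$-norm independent of $x$. For first Riesz means your argument is the cleaner one and even yields an explicit constant $C=4/\pi$.
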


We finish this section indicating  how to prove this inequality. The proof starts with two concrete integrals.
The first integral  follows from a standard  application of Cauchy's theorem (it is a particular  case of \cite[Lemma 10, p.50]{HardyRiesz}),

\begin{equation}\label{genius}
\frac{1}{2\pi i}\int_{c-i\infty}^{c+i\infty} \frac{e^{ys}}{s^{2}} ds = \begin{cases} y& \text{ for } y\ge 0\\ 0 & \text{ for } y<0 \end{cases}
\,\,\, \text{ and $c >0$ }
\end{equation}
 whereas the  second one is a consequence of an elementary  calculation:
\begin{equation} \label{lemma1type1ineq2}
\int_{\R} \frac{P_{v}(y-t)}{v^2 +y^2} dy=\frac{2}{4v^{2}+t^{2}}\,\,\, \text{ for $t,v >0$  }.
\end{equation}
From these two integrals we deduce some sort of Perron formula for logarithmic means.

\begin{Prop} \label{hallo}
Let  $f \in H_1(\mathbb{T}^\infty)$. Then there is a null set $N$ in $\mathbb{T}^\infty$
such that for all $z \notin N$ and for all $x >0$
\begin{equation*} \label{perroneq}
\frac{2\pi x }{e} R_{x}(f)(z)= \int_{\R} f_{z}(a) \mathcal{F}_{L_{1}(\R)} \left( \frac{P_{\frac{1}{x}}(\cdot-a)}{(\frac{1}{x}+i\cdot)^{2}} \right)(-x) da\,,
\end{equation*}
where $\mathcal{F}_{L_{1}(\R)}$ stands for the Fourier transform on $L_{1}(\R)$.
\end{Prop}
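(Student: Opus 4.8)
The plan is to prove the identity first for trigonometric polynomials $f$, where every interchange of sum and integral is trivially legitimate, and then to pass to general $f\in H_1(\mathbb{T}^\infty)$ by an $H_1$-approximation whose only substantial ingredient is the elementary integral \eqref{lemma1type1ineq2}. Writing $\lambda_\alpha:=\log\mathfrak{p}^\alpha$ (which is $\ge 0$ on $\supp\widehat f$, since $H_1(\mathbb{T}^\infty)$ has spectrum in $\mathbb{N}_0^{(\mathbb{N})}$), and noting that $R_x(f)(z)$ is a finite sum, the first step is to rewrite each Riesz weight by \eqref{genius}, taken with $y=x-\lambda_\alpha$ and on the vertical line $c=1/x>0$:
\[
\Big(1-\tfrac{\lambda_\alpha}{x}\Big)_+=\frac1x\cdot\frac1{2\pi i}\int_{1/x-i\infty}^{1/x+i\infty}\frac{e^{(x-\lambda_\alpha)s}}{s^{2}}\,ds\,,
\]
where \eqref{genius} also produces the value $0$ for those $\alpha$ with $\lambda_\alpha\ge x$, so we may freely extend the summation to all of $\supp\widehat f$ and interchange the finite sum with the contour integral.

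This gives $R_x(f)(z)=\tfrac1{2\pi i\,x}\int_{1/x-i\infty}^{1/x+i\infty}\tfrac{e^{xs}}{s^{2}}\big(\sum_\alpha\widehat f(\alpha)z^\alpha e^{-\lambda_\alpha s}\big)\,ds$. On the line $\Re s=1/x$, writing $s=1/x+i\tau$, one checks (using $\beta(\tau)^\alpha=e^{-i\lambda_\alpha\tau}$ and $\widehat{p_{1/x}}(\alpha)=e^{-\lambda_\alpha/x}$) that the inner sum is exactly $(f\ast p_{1/x})_z(\tau)$. Parametrising the contour by $\tau$ and using $e^{x\cdot(1/x)}=e$ turns the identity into $\tfrac{2\pi x}{e}R_x(f)(z)=\int_{\R}\tfrac{e^{ix\tau}}{(1/x+i\tau)^{2}}(f\ast p_{1/x})_z(\tau)\,d\tau$. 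Next one substitutes $(f\ast p_{1/x})_z=f_z\ast P_{1/x}$ — immediate from \eqref{flowflow} together with the fact that $p_{1/x}$ is the push-forward of $P_{1/x}$ under the Kronecker flow — applies Fubini (here $f$ bounded and $\tau\mapsto(1/x+i\tau)^{-2}$ integrable against the Poisson kernel, with $L_1(\R)$-norm $\tfrac{2}{4/x^2+a^2}$ by \eqref{lemma1type1ineq2}), changes variable $a=\tau-t$, and recognises the resulting inner integral $\int_\R\tfrac{P_{1/x}(\tau-a)}{(1/x+i\tau)^{2}}e^{ix\tau}\,d\tau$ as $\mathcal F_{L_1(\R)}\big(\tfrac{P_{1/x}(\cdot-a)}{(1/x+i\cdot)^{2}}\big)(-x)$, which makes sense precisely because \eqref{lemma1type1ineq2} shows $\tau\mapsto\tfrac{P_{1/x}(\tau-a)}{(1/x+i\tau)^{2}}\in L_1(\R)$. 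This proves the formula for polynomials, with no exceptional set.

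For general $f\in H_1(\mathbb{T}^\infty)$ I would choose polynomials $f^{(j)}\to f$ in $H_1$. On the left, $R_x(f^{(j)})(z)\to R_x(f)(z)$ for every $z$ and every $x>0$, as $R_x$ is a finite sum whose coefficients $\widehat{f^{(j)}}(\alpha)\to\widehat f(\alpha)$ and whose weights depend continuously on $x$. On the right, the pointwise bound $\big|\mathcal F_{L_1(\R)}\big(\tfrac{P_{1/x}(\cdot-a)}{(1/x+i\cdot)^{2}}\big)(-x)\big|\le\tfrac{2}{4/x^{2}+a^{2}}=\tfrac{2x^2}{4+a^2x^2}$ from \eqref{lemma1type1ineq2}, together with the translation invariance of the Haar measure (giving $\int_{\mathbb{T}^\infty}|g_z(a)|\,dz=\|g\|_1$ for $g\in H_1$), yields
\[
\int_{\mathbb{T}^\infty}\int_{\R}\big|f^{(j)}_z(a)-f_z(a)\big|\,\tfrac{2x^2}{4+a^2x^2}\,da\,dz=\pi x\,\big\|f^{(j)}-f\big\|_1\longrightarrow 0\,,
\]
so, along a subsequence, the right-hand side for $f^{(j)}$ converges to that for $f$ for almost every $z$; hence the formula holds a.e.\ for each fixed $x$. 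To obtain a single null set $N$ valid for all $x$, one intersects over a countable dense set of $x$ and extends by continuity in $x$ of both sides at a.e.\ fixed $z$: the left side because the Riesz weight vanishes at the threshold $\lambda_\alpha=x$, the right side by dominated convergence, using that $\int_\R|f_z(a)|(c+a^2)^{-1}\,da<\infty$ for every $c>0$ and a.e.\ $z$ (the standard domination of Poisson averages by $\overline M(f)$, finite a.e.\ by Theorem \ref{HLoperator}).

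The bookkeeping around the Perron-type identity \eqref{genius} in the polynomial step is routine. The point that genuinely requires the second integral \eqref{lemma1type1ineq2} — and is the crux of the argument — is the $L_1(\R)$-control of the Fourier kernel $\tau\mapsto\tfrac{P_{1/x}(\tau-a)}{(1/x+i\tau)^{2}}$: it is what simultaneously legitimises the Fubini interchange, makes the Fourier transform on the right-hand side well defined, and drives the approximation to arbitrary $H_1$-functions with an exceptional null set independent of $x$.
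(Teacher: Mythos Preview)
Your argument follows the same two-step strategy as the paper: establish the identity for trigonometric polynomials via the Perron integral \eqref{genius} (rewriting the Riesz weights as a contour integral and recognising the inner sum as $(f\ast p_{1/x})_z=f_z\ast P_{1/x}$), and then pass to general $f\in H_1(\mathbb{T}^\infty)$ by polynomial approximation, with \eqref{lemma1type1ineq2} supplying the crucial $L_1(\R)$-bound on the kernel $\tau\mapsto P_{1/x}(\tau-a)(1/x+i\tau)^{-2}$. The only notable difference is that you make the independence of the null set from $x$ explicit via a countable dense set and continuity in $x$---where the paper is somewhat terse---and you invoke $\overline{M}(f)$ and Theorem~\ref{HLoperator} for that continuity step, which is valid but heavier than needed: the Fubini estimate $\int_{\mathbb{T}^\infty}\int_\R |f_z(a)|(c+a^2)^{-1}\,da\,dz=\tfrac{\pi}{\sqrt c}\|f\|_1<\infty$ already yields the required a.e.\ finiteness for each (and hence, by monotonicity in $c$, for all) $c>0$.
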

\begin{proof}
Let us first assume that $f =\sum_{\mathfrak{p}^\alpha < N} \hat{f}(\alpha) z^\alpha \in H_1(\mathbb{T}^\infty)$. Then by \eqref{genius} for $c >0$ and $z \in \mathbb{T}^\infty$
\begin{align*}
e^{xc}\int_{\R}   f_{z}(a) \int_{\R}  \frac{P_{c}(t-a)}{(c+it)^{2}}e^{ixt} dt ~ da
&
= e^{xc}\int_{\R} \frac{(f_{z}*P_{c})(t)}{(c+it)^{2}} e^{ixt} dt
\\&
=\int_{\R} \frac{\sum_{\log \mathfrak{p}^\alpha < N} \hat{f}(\alpha) z^\alpha
e^{-\log \mathfrak{p}^\alpha (c+it)}}{(c+it)^{2}} e^{x(c+it)} dt
\\&
=
2\pi x\sum_{\log \mathfrak{p}^\alpha < x} \hat{f}(\alpha) z^\alpha
\left(1-\frac{\log \mathfrak{p}^\alpha}{x}\right)\,,
\end{align*}
and the  choice $c= 1/x$ leads to the conclusion.
To prove this for arbitrary $f \in H_1(\mathbb{T}^\infty)$, observe that for all  $v>0$ the operator
\begin{equation} \label{operatorA}
\mathcal{A} \colon L_{1}(\mathbb{T}^\infty) \to L_{1}(\mathbb{T}^\infty,L_{1}(\R)), ~~ f \mapsto \left[z \to \frac{f_{z}*P_{v}}{\left(v+i\cdot\right)^{2}}\right]
\end{equation}
is bounded. Indeed, by \eqref{lemma1type1ineq2} and Fubini's theorem  for every $f \in L_{1}(\mathbb{T}^\infty)$
\begin{align*}
\|\mathcal{A}(f)\|_1
&
=\int_{\mathbb{T}^\infty} \left| \int_{\R} \frac{f_{z}*P_{v}(y)}{\left(v+iy\right)^{2}} dy \right| dz
\\
&
\leq \int_{\mathbb{T}^\infty} \int_{\R} |f_{z}(t)| \int_{\R} \frac{P_{v}(y-t)}{v^2 + y^2} dy dt dz
\\
&
\le \int_{\mathbb{T}^\infty} \int_{\R} |f_{z}(t)| \frac{2}{4v^{2}+t^{2}} dt dz
= C_{1}(v)\|f\|_{1}.
\end{align*}
Additionally, this shows, that for some null set $N$ in $\mathbb{T}^\infty$
we have that $\frac{f_{z}*P_{v}}{(v+i\cdot)^{2}} \in L_{1}(\R)$ for all $z \notin N$, and so we in particular deduce that for $z \notin N$ and $x>0$
\begin{equation} \label{Fourier2}
\mathcal{F}_{L_{1}(\R)}\left( \frac{f_{z}*P_{\frac{1}{x}}}{\left(\frac{1}{x}+i\cdot\right)^{2}}\right)(-x)=\int_{\R} f_{z}(a) \mathcal{F}_{L_{1}(\R)} \left( \frac{P_{\frac{1}{x}}(\cdot-a)}{(\frac{1}{x}+i\cdot)^{2}} \right)(-x) da.
\end{equation}
Now let $(Q^{n})$ be a sequence of polynomials in $H_1(\mathbb{T}^\infty)$ converging to $f$ in $H_1(\mathbb{T}^\infty)$ (see e.g. \cite[Proposition 5.5]{Defant}). Then, by the continuity of $\mathcal{A}$ and $\mathcal{F}_{L_{1}(\R)}$, we for all $z \notin N$ obtain some
subsequence $(Q^{n_{k}})$ such that under uniform convergence on $\R$
$$\mathcal{F}_{L_{1}(\R)}\left( \frac{f_{z}*P_{\frac{1}{x}}}{\left(\frac{1}{x}+i\cdot\right)^{2}}\right)=\lim_{k\to \infty}\mathcal{F}_{L_{1}(\R)}\left( \frac{Q_{z}^{n_{k}}*P_{\frac{1}{x}}}{\left(\frac{1}{x}+i\cdot\right)^{2}}\right).$$
 So, knowing that the claim holds true for polynomials, by \eqref{Fourier2} for all  $z \notin N$ and $x>0$
\begin{align*}
\mathcal{F}_{L_{1}(\R)}\left( \frac{f_{z}*P_{\frac{1}{x}}}{\left(\frac{1}{x}+i\cdot\right)^{2}}\right)(-x)
&
=\lim_{k\to \infty}\mathcal{F}_{L_{1}(\R)}\left( \frac{Q_{z}^{n_{k}}*P_{\frac{1}{x}}}{\left(\frac{1}{x}+i\cdot\right)^{2}}\right)(-x)
\\ &=\frac{2\pi x }{e}\lim_{k\to \infty} R_{x}(Q^{n_{k}})(z)=\frac{2\pi x }{e}R_{x}(f)(z),
\end{align*}
which  looking again at \eqref{Fourier2} finishes the argument.
\end{proof}

Finally, we may combine all this to get the

\begin{proof}[Proof of Theorem \ref{reduction}]
For $a \in \mathbb{R}$ and $x>0$ we define
\[
K(a):=\frac{1}{|1+ia|^{2}}\,\,\, \text{and} \,\,\,K_{x}(a):=x K(ax)=\frac{x}{|1+iax|^{2}}\,.
\]
 Then with Proposition \ref{hallo} and  \eqref{lemma1type1ineq2} we obtain for almost all $z \in \mathbb{T}^\infty$ and $x>0$
\begin{align*}
|R_{x}(f)(z)|
&
\le \frac{C_{1}}{ x} \int_{\R} |f_{z}(a)| \left\|\frac{P_{\frac{1}{x}}(\cdot-a)}{(\frac{1}{x}+i\cdot)^{2}}\right\|_{L_{1}(\R)} da
\\ &
\le\frac{C_{2}}{x} \int_{\R}|f_{z}(a)| \frac{1}{|\frac{1}{x}+ia|^{2}} ~da
\\ &
= C_{3} \int_{\R}|f_{z}(a)| K_{x}(a) ~da =C_{3} (|f_{z}|*K_{x})(0).
\end{align*}
Now by \cite[Theorem 2.1.10, p.91]{Grafakos1} we have for almost all $z \in \mathbb{T}^\infty$
$$\sup_{x>0} |f_{z}|*K_{x}(0)\le \|K\|_{L_{1}(\R)} \sup_{T>0} \frac{1}{2T} \int_{-T}^{T} |f_{z}(t)| dt \le \|K\|_{L_{1}(\R)} \overline{M}(f)(z),$$
which then  all in all gives the conclusion.
\end{proof}

    \section{Helson meets Banach}
    A sequence $(x_n)$ in a Banach space $X$ is a Schauder basis whenever every $x \in X$ has a unique
    series representation $x = \sum_{n=1}^\infty \alpha_n x_n$.
    For $\mathcal{H}_p$'s of Dirichlet series the following result was first realized in \cite{Saksman}.

    \begin{Theo} \label{zero}
    Let $1 < p < \infty$. Then
         for every  $D= \sum a_{n}n^{-s}\in \mathcal{H}_p$
    $$
    D = \lim_{x \to \infty}\sum_{n=1}^x a_{n}n^{-s}
    $$
    converges in $\mathcal{H}_p$, i.e. the Dirichlet series $n^{-s}, n \in \mathbb{N}$, form a Schauder basis of $\mathcal{H}_p$. Equivalently,
    for every  $f \in H_p(\mathbb{T}^\infty)$
    $$
    f=  \lim_{x \to \infty }\sum_{ \mathfrak{p}^\alpha < x }\hat{f}(\alpha) z^\alpha
    $$
    converges in $H_p(\mathbb{T}^\infty)$. In other words, the monomials $z^\alpha$ ordered
    as in \eqref{order} (i.e. $z^\alpha \leq z^\beta :\Leftrightarrow
    \log \mathfrak{p}^\alpha \leq\log \mathfrak{p}^\beta$ ) form a Schauder basis of $H_p(\mathbb{T}^\infty)$.
          \end{Theo}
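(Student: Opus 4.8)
The plan is to verify the three standard requirements for a sequence $(e_k)$ in a Banach space $X$ to be a Schauder basis: each $e_k\neq 0$, the closed linear span of the $e_k$ equals $X$, and the canonical partial-sum projections extend to uniformly bounded operators on $X$. Once these are in place the basis property together with the uniqueness of the coefficients follows from the general theory.

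First I would unwind the ordering. Restricted to exponents $\alpha\in\mathbb{N}_0^{(\mathbb{N})}$, the order \eqref{order} compares $\mathfrak{p}^\alpha$ and $\mathfrak{p}^\beta$ as natural numbers, so it is nothing but the usual order of $\mathbb{N}$; hence the ordered family of monomials is the honest sequence $(z^{\alpha(n)})_{n\ge 1}$ with $\mathfrak{p}^{\alpha(n)}=n$, every initial segment is finite, and under the Bohr transform \eqref{Bohrtrafo} the $N$-th partial-sum operator
\[
S_N\colon H_p(\mathbb{T}^\infty)\to H_p(\mathbb{T}^\infty),\qquad f\longmapsto \sum_{\mathfrak{p}^\alpha\le N}\widehat f(\alpha)z^\alpha,
\]
corresponds exactly to the truncation $D=\sum a_n n^{-s}\mapsto \sum_{n\le N}a_n n^{-s}$ on $\mathcal{H}_p$. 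Density of the span is already recorded in Section \ref{Hardyspace}: the analytic polynomials are dense in $H_p(\mathbb{T}^\infty)$ for $1\le p<\infty$, equivalently the Dirichlet polynomials are dense in $\mathcal{H}_p$; and the monomials are of course non-zero.

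The heart of the matter is the uniform bound $\sup_N\|S_N\|<\infty$, and here I would simply invoke the Carleson--Hunt type maximal inequality of Hedenmalm--Saksman and Duy, that is Theorem \ref{one}. For $1<p<\infty$ and $f\in H_p(\mathbb{T}^\infty)$ one has the trivial pointwise domination $|S_Nf(z)|\le\sup_x\big|\sum_{\mathfrak{p}^\alpha<x}\widehat f(\alpha)z^\alpha\big|$ for all $N$ and almost every $z$, hence
\[
\|S_Nf\|_p\le\Big(\int_{\mathbb{T}^\infty}\sup_x\Big|\sum_{\mathfrak{p}^\alpha<x}\widehat f(\alpha)z^\alpha\Big|^{p}\,dz\Big)^{1/p}\le \text{CH}_p\,\|f\|_p
\]
uniformly in $N$. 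With this in hand the remaining steps are routine: on the dense set of polynomials $S_Ng$ is eventually equal to $g$, so an $\varepsilon/3$-argument combined with $\sup_N\|S_N\|\le\text{CH}_p$ yields $S_Nf\to f$ in $H_p(\mathbb{T}^\infty)$ for every $f$; uniqueness of the expansion follows because each coefficient functional $f\mapsto\widehat f(\beta)$ is continuous on $H_p(\mathbb{T}^\infty)$, so a series $\sum_\alpha c_\alpha z^\alpha$ whose partial sums converge to $0$ must have all $c_\beta=0$. Transporting everything back through the isometry $\mathfrak{B}$ of \eqref{Bohrtrafo} gives the statement for $\mathcal{H}_p$.

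The genuine difficulty is thus entirely absorbed into Theorem \ref{one}, which is deep (one-variable Carleson--Hunt together with Fefferman's trick). Should one wish to avoid that machinery, the obstacle becomes to establish $\sup_N\|S_N\|<\infty$ directly; this is in fact cheaper than the full maximal inequality and can be obtained from the $L_p$-boundedness of a suitable iterated Riesz projection on $L_p(\mathbb{T}^\infty)$ transported along the Kronecker flow, but it still uses $1<p<\infty$ in an essential way. It is worth recording in the proof that both endpoints genuinely fail: for $p=1$ the truncations $S_N$ are not uniformly bounded already on the subspace $H_1(\mathbb{T})\subset H_1(\mathbb{T}^\infty)$ of functions of the first variable (equivalently, $(z^n)$ is not a Schauder basis of $H_1(\mathbb{T})$), while for $p=\infty$ the space $\mathcal{H}_\infty=\mathcal{D}_\infty$ is non-separable and hence carries no Schauder basis at all.
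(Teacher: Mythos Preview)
Your proposal is correct and follows exactly the paper's own argument: the paper reduces Theorem~\ref{zero} to the uniform boundedness of the partial-sum projections $S_x^p$ and notes that this is an immediate consequence of the maximal inequality of Theorem~\ref{one}, precisely as you do (with the standard density and $\varepsilon/3$ details spelled out). Your side remarks---the alternative route via the $L_p$-boundedness of the Riesz projection (Rudin) and the failure at the endpoints $p=1,\infty$---also mirror the discussion the paper gives right after the theorem.
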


An equivalent formulation of  all this   is that
for $1<p<\infty$
all projections
\begin{equation} \label{Riesz}
S_x^p: \mathcal{H}_p \to \mathcal{H}_p,\,\,
\sum a_n n^{-s} \mapsto \sum_{n<x} a_n n^{-s}
\end{equation}
 are uniformly bounded, and this is an immediate consequence of Theorem \ref{one}.
  As explained in Section \ref{fail} the proof of Theorem \ref{one} is based on the deep Carleson-Hunt Theorem
  \ref{CarlesonHunt}.\\

  But  let us remark that Theorem \ref{zero} is also
  an almost  straightforward consequence of
      Rudin's work  from \cite[Theorem 8.7.2]{Rudin62}.   There Rudin proves (as a particular case of a more general result on
      compact abelian groups with ordered duals, see \eqref{order}  for the order on $\widehat{\T^\infty}$) that   the so-called Riesz projection
      \[
      L_2(\mathbb{T}^\infty) \to L_2(\mathbb{T}^\infty), \, \,\,
      f \mapsto \sum_{\log \mathfrak{p}^\alpha \ge 0} \hat{f}(\alpha) z^\alpha
      \]
     for every $1 < p < \infty$ extends to a bounded operator on $L_p(\mathbb{T}^\infty)$.
For  the border cases  $p=\infty$ and $p=1$ this is false.
      Indeed, from  Theorem \ref{Bohr} one may deduce that
      \[
 \log \log x \ll \|S_x^\infty: \mathcal{H}_\infty \to \mathcal{H}_\infty \| \ll \log x\,,
 \]
whereas
 a very recent estimate  from \cite{BoBrSaSe} states that
 \[
 \log \log x \ll \|S_x^1: \mathcal{H}_1 \to \mathcal{H}_1 \| \ll \frac{\log x}{\log  \log x}
 \]
(in both cases the lower estimates are simple consequences of the one variable case and
Bohr's transform from \eqref{Bohrtrafo}.\\

Finally, we ask a question for the Banach spaces $\mathcal{H}_1$ and $H_1(\mathbb{T}^\infty)$
which is analog to the one we posed in Section \ref{fail}.\\

    \noindent {\bf Question:} {\it Is it true that for all $D \in \mathcal{H}_1$ we have that
    \[
    \lim_{N \to \infty}  \frac{1}{N} \sum_{k=0}^{N-1} \sum_{n\le k} a_n n^{-s} =D \,\,\,\, \text{in $\mathcal{H}_1$\,,}
    \]
 or, equivalently, do we for $f \in H_1(\mathbb{T}^\infty)$ have that
\begin{equation} \label{labalaba}
    \lim_{N \to \infty}  \frac{1}{N} \sum_{k=0}^{N-1}\sum_{\mathfrak{p^\alpha} \le  k} \hat{f}(\alpha) z^\alpha=f \,\,\,\, \text{in $H_1(\mathbb{T}^\infty)$?}
    \end{equation}
    }

The answer is again no. Otherwise  the same argument as in Section \ref{fail}  would show that the Fourier  series of every
$f \in H_1(\mathbb{T})$ converges in $H_1(\mathbb{T})$ which we know is false.

\end{document}